\documentclass{amsart}
\usepackage{amsmath,amssymb,amsthm,amsfonts}
\usepackage{mathrsfs}
\usepackage{amscd} 

\usepackage{bm}
\usepackage{hyperref}
\usepackage[normalem]{ulem}
\usepackage[T1]{fontenc}
\usepackage{lmodern}      
\usepackage[english]{babel}
\usepackage{microtype}               
\usepackage{graphicx}
\usepackage{enumitem}
\usepackage[curve]{xypic}
\usepackage[usenames,dvipsnames,x11names]{xcolor}
\usepackage{tikz}
\usepackage[all]{xy}
\usepackage{tikz-cd}
\usetikzlibrary{matrix}
\usetikzlibrary{arrows,decorations,shapes,shadows}
\usepackage{verbatim}
\usepackage{stmaryrd}
\usepackage{ocgx}

\newbox\mybox
\def\overtag#1#2#3{\setbox\mybox\hbox{$#1$}\hbox to
  0pt{\vbox to 0pt{\vglue-#3\vglue-\ht\mybox\hbox to \wd\mybox
      {\hss$\ss#2$\hss}\vss}\hss}\box\mybox}
\def\undertag#1#2#3{\setbox\mybox\hbox{$#1$}\hbox to 0pt{\vbox to
    0pt{\vglue#3\vglue\ht\mybox\hbox to \wd\mybox
      {\hss$\ss#2$\hss}\vss}\hss}\box\mybox}
\def\lefttag#1#2#3{\hbox to 0pt{\vbox to 0pt{\vglue -6pt\hbox to
      0pt{\hss$\ss#2$\hskip#3}\vss}}#1}
\def\righttag#1#2#3{\hbox to 0pt{\vbox to 0pt{\vglue -6pt\hbox to
      0pt{\hskip#3$\ss#2$\hss}\vss}}#1}
\let\ss\scriptstyle
\def\splicediag#1#2{\xymatrix@R=#1pt@C=#2pt@M=0pt@W=0pt@H=0pt}
\def\Dot{\lower.2pc\hbox to 2pt{\hss$\bullet$\hss}}
\def\Circ{\lower.2pc\hbox to 2pt{\hss$\circ$\hss}}
\def\Vdots{\raise5pt\hbox{$\vdots$}}
\newcommand\lineto{\ar@{-}}
\newcommand\dashto{\ar@{--}}
\newcommand\dotto{\ar@{.}}

\usepackage{xcolor}

\newcommand{\fract}[2]{\hbox{\leavevmode
  \kern.1em \raise .25ex \hbox{\the\scriptfont0 $#1$}\kern-.1em }\big/
  {\hbox{\kern-.15em \lower .5ex \hbox{\the\scriptfont0 $#2$}} }}

\renewcommand{\setminus}{\smallsetminus}
\newcommand\Q{{\mathbb Q}}
\newcommand\R{{\mathbb R}}
\newcommand\C{{\mathbb C}}
\newcommand\Z{{\mathbb Z}}
\newcommand{\LL}{\mathbb L}
\newcommand\N{{\mathbb N}}
\newcommand\pa{{\mathfrak a}}

\newcommand\pb{{\mathfrak b}}

\newcommand\pc{{\mathfrak c}}

\renewcommand{\t}{\tau}

\newcommand{\unit}{\operatorname{unit}}
\newcommand{\Ker}{\operatorname{Ker}}
\renewcommand\O{{\mathcal O}}
\newcommand\lb{\llbracket}
\newcommand\rb{\rrbracket}
\renewcommand{\phi}{\varphi}
\renewcommand{\leq}{\leqslant}
\renewcommand{\geq}{\geqslant}

\newcommand\ub{{\mathbf{u}}}

\newcommand{\lgw}{\longrightarrow}
\newcommand{\lgm}{\longmapsto}
\newcommand{\s}{\sigma}

\newcommand{\tg}{\sigma}
\newcommand{\ovl}{\overline}
\newcommand{\wdh}{\widehat}

\newcommand{\PP}{\mathbb P}
\newcommand{\ini}{\operatorname{in}}
\newcommand{\wdt}{\widetilde}
\newcommand{\het}{\operatorname{ht}}

\newcommand{\la}{\lambda}
\renewcommand{\O}{\mathcal{O}}
\renewcommand{\k}{\Bbbk}
\renewcommand{\a}{\alpha}
\renewcommand{\b}{\beta}

\newcommand{\Disc}{\operatorname{Disc}}

\newcommand{\K}{\mathbb{K}}
\newcommand{\g}{\gamma}

\newcommand{\ord}{\operatorname{ord}}
\renewcommand{\d}{\delta}

\newcommand{\Frac}{\operatorname{Frac}}

\newcommand{\KK}{\mathcal K}

\newcommand{\vb}{\mathbf{v}}
\newcommand{\x}{\mathbf{x}}
\newcommand{\y}{\mathbf{y}}
\newcommand{\OOx}{\KK\nl \x\nr}
\newcommand{\OOu}{\KK\nl \ub\nr}
\newcommand{\OOv}{\KK\nl \vb\nr}

\newcommand{\POx}{\mathbb P_h\nl\x\nr}

\newcommand{\w}{\mathbf{w}}
\newcommand{\om}{\omega}

\newcommand{\G}{\Gamma}

\newcommand{\nl}{\{\!\{}
\newcommand{\nr}{\}\!\}}

\newcommand{\Gr}{\operatorname{r}}
\newcommand{\Fr}{\operatorname{r}^{\mathcal{F}}}
\newcommand{\Ar}{\operatorname{r}^{\mathcal{A}}} 
\newcommand{\Tr}{\operatorname{r}^{\mathcal{W}}} 
\newcommand{\Wr}{\operatorname{r}^{\mathcal W}}

\newtheorem{theorem}{Theorem}[section]
\newtheorem{proposition}[theorem]{Proposition}
\newtheorem*{theorem*}{Theorem}

\newtheorem{corollary}[theorem]{Corollary}
\newtheorem{lemma}[theorem]{Lemma}

\theoremstyle{definition}

\newtheorem*{amalgamation*}{Amalgamation}

\newtheorem{remark}[theorem]{Remark}
\newtheorem{problem*}[theorem]{Problem}
\newtheorem*{remark*}{Remark}
\newtheorem{definition}[theorem]{Definition}

\usepackage{epigraph}

\begin{document}
\title{On rank Theorems for morphisms of local rings}
\author[A.~Belotto da Silva]{Andr\'e Belotto da Silva}
\author[O.~Curmi]{Octave Curmi}
\author[G.~Rond]{Guillaume Rond}
\address[A.~Belotto da Silva]{Université Paris Cité and Sorbonne Université, CNRS, IMJ-PRG, F-75013 Paris, France.}
\address[G.~Rond]{Universit\'e Aix-Marseille, Institut de Math\'ematiques de Marseille (UMR CNRS 7373), Centre de Math\'ematiques et Informatique, 39 rue F. Joliot Curie, 13013 Marseille, France.}
\address[O.~Curmi]{Alfréd Rényi Institute of Mathematics, Budapest, Hungary.}
\email[A.~Belotto da Silva]{andre.belotto@imj-prg.fr}
\email[O.~Curmi]{curmi@renyi.hu}
\email[G.~Rond]{guillaume.rond@univ-amu.fr}
\thanks{}

\subjclass[2010]{Primary 13B10, 14P20, 32C07; Secondary 13A18, 13J05, 32B20}

\keywords{Rank Theorem, Power series homeomorphism, convergent series, Weierstrass ring, non-archimedean fields}

\begin{abstract}
We prove a generalization of Gabrielov's rank theorem for families of rings of power series which we call W-temperate. Examples include the families of complex analytic functions and of Eisenstein series. As a Corollary, we provide rank Theorems for convergent series in general characteristic zero complete valued fields (not necessarily algebraically closed, nor archimedean).
\end{abstract}

\maketitle

\section{Introduction}\label{sec:Intro}

A classical idea in local analytic geometry, which goes back at least to the Newton-Puiseux Theorem, consists of first studying a problem or an object formally, and then proving that the formal solution converges. Rank Theorems stand between the fundamental results of this philosophy applied to maps and equations, and they can be seen as the dual to the celebrated Artin approximation Theorem \cite{Artin,Artin2,DL}, cf. Remark \ref{rk:TemperateDualArtin}. We prove a rank Theorem for general families of rings which we call \emph{W-temperate}, see Theorem \ref{thm:TemperateRank}, generalizing the classical Gabrielov's rank Theorem \cite{Ga2,To2, BCR}. These are families of Weierstrass rings $(\KK\nl x_1,\ldots, x_r\nr)_{r\in \N}$, that is, families of rings of power series satisfying the Weierstrass division theorem, see Definition \ref{W-fam}, where $\KK$ is any uncountable algebraically closed field of characteristic zero, which satisfies three axioms: closure under local blowing-down, closure under restriction to generically hyperplane sections and temperateness, a closure under evaluation by algebraic elements type condition, see Definition \ref{temperate_fam}. 

Examples of Weierstrass families include the family of germs of complex-analytic functions, algebraic power series, Eisenstein power series, and convergent series over a complete non-archimedean field. Rank Theorems were known to hold in the first two examples \cite{Ga2,To2,BCR}, but our current methods greatly simplifies and shortens the proof in these contexts. Eisenstein power series have been systematically employed in the study of families of singularities, see e.g. \cite{ZarEisenstein,HirEqui,PP21}, going back at least to works of Zariski \cite[pp. 502]{ZarEisenstein}. In particular, they allow us to obtain rank Theorems for families of morphisms, see \cite[Theorem 3.5]{BCR2}. In particular, we use them in our follow-up work \cite{BCR2} to provide new proofs of two fundamental results of analytic and subanalytic geometry due to Paw\l ucki \cite{Pawthesis, Paw}, whose original proofs are considered very difficult \cite[pp. 1591]{LojCite}. Finally, the last example allows us to deduce rank Theorems for convergent power series in non-archimedean fields such as $\Q_p$, $\C_p$ and $\k(\!(t)\!)$, see Corollary \ref{thm:Gabrielov_convergent}. It can be seen as a new extension of a complex-analytic technique to the $p$-adic setting, c.f. \cite{DVdD}.

\bigskip
 
Let $\KK$ be a characteristic zero algebraically closed field. We consider families of rings of power series $(\KK\nl x_1,\ldots, x_r\nr)_{r\in \N}$  called Weierstrass, see Definitions \ref{W-fam}, which were introduced by Denef and Lipshitz \cite{DL}. Note that the completion of $\KK \nl x_1,\ldots, x_r \nr$ is $\KK \lb x_1,\ldots, x_r \rb $, see Proposition  \ref{rk:Wtemp}\,\ref{prop:hensel}. Given a ring homomorphism:
\[
\phi : \KK\nl \x \nr\lgw \KK\nl \ub \nr
\]
where $\x = (x_1,\ldots,x_n)$ and $\ub = (u_1,\ldots, u_m)$, we say that $\phi$ is a morphism of Weierstrass power series if $\phi(f) = f(\phi(\x))$ for every $f\in \KK\nl \x \nr$, see Definition \ref{def:TemperateMorphism}. We denote by $\wdh{\phi}$ its extension to the ring of formal power series. We define:
\begin{equation}\label{eq:DefRanks}
\begin{aligned}
\text{the Generic rank:} & &\Gr(\phi) &:= \mbox{rank}_{\mbox{Frac}(\KK \nl \ub \nr )}(\mbox{Jac}(\phi)),\\
\text{the Formal rank:}& &\Fr(\phi) &:= \dim\left(\frac{\KK \lb \x \rb  }{\Ker(\wdh{\phi})}\right),\\
\text{and the Weierstrass rank:}& &\Tr(\phi)&:=\dim\left(\frac{\KK \nl \x \nr}{\Ker(\phi)}\right),
\end{aligned}
\end{equation}
of $\phi$, where $\mbox{Jac}(\phi)$ stands for the matrix $[\partial_{u_i} \phi(x_j)]_{i,j}$. Our main result concerns Weierstrass families that satisfy three extra axioms which we call \emph{W-temperate}, see Definitions \ref{W-fam}:

\begin{theorem}[W-temperate rank Theorem]\label{thm:TemperateRank}
Let $\phi:\KK\nl \x \nr\lgw \KK\nl  \ub \nr$ be a morphism of rings of W-temperate power series. Then
\[
\Gr(\phi)=\Fr(\phi)\Longrightarrow \Gr(\phi)=\Fr(\phi)=\Tr(\phi).
\]
\end{theorem}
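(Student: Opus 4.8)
The strategy is to reorganize the classical proof of Gabrielov's rank theorem around the three W-temperate axioms. First, the standard inequalities. Since $\phi$ and $\wdh\phi$ agree on $\KK\nl\x\nr$ one has $\Ker(\phi)=\Ker(\wdh\phi)\cap\KK\nl\x\nr$; writing $\wdh\p:=\Ker(\wdh\phi)$, a prime of $\KK\lb\x\rb$ of height $n-\Fr(\phi)$, the inclusion $\Ker(\phi)\KK\lb\x\rb\subseteq\wdh\p$ and the invariance of dimension under completion give $\Fr(\phi)\leq\Tr(\phi)$, while $\Gr(\phi)=\operatorname{trdeg}_{\KK}\KK(\phi(x_1),\dots,\phi(x_n))\leq\dim\bigl(\KK\lb\x\rb/\wdh\p\bigr)=\Fr(\phi)$. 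So, putting $p:=\Gr(\phi)=\Fr(\phi)$, the theorem reduces to the reverse inequality $\het(\Ker(\phi))\geq n-p$.

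The first real step is a reduction to the codimension-one case. After a generic linear coordinate change, $R_0:=\KK\lb x_1,\dots,x_p\rb$ injects finitely into $\KK\lb\x\rb/\wdh\p$ (Noether normalisation, using that $\KK$ is infinite), so $\phi_0:=\phi|_{\KK\nl x_1,\dots,x_p\nr}$ is injective and $\phi(x_1),\dots,\phi(x_p)$ are algebraically independent over $\KK$; since $\Gr(\phi)=p$, each $\phi(x_j)$ with $j>p$ is algebraic over $\KK(\phi(x_1),\dots,\phi(x_p))$, hence the residue $\bar x_j\in\KK\lb\x\rb/\wdh\p$ is integral over $R_0$ with minimal polynomial $P_j\in R_0[T]$, which (because $R_0\to\KK\lb\x\rb/\wdh\p$ is finite) is a Weierstrass polynomial in $x_j$. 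Put $g_j:=P_j(x_1,\dots,x_p;x_j)\in\wdh\p$. If each $g_j$ has convergent coefficients then $(g_{p+1},\dots,g_n)\subseteq\Ker(\phi)$, and since each $g_j$ is monic in $x_j$, iterated Weierstrass division makes $\KK\nl\x\nr/(g_{p+1},\dots,g_n)$ finite over $\KK\nl x_1,\dots,x_p\nr$, whence $\het(\Ker(\phi))\geq\het(g_{p+1},\dots,g_n)=n-p$. Now ``$g_j$ has convergent coefficients'' is precisely the codimension-one case of the theorem applied to $\phi_j:=\phi|_{\KK\nl x_1,\dots,x_p,x_j\nr}$ — a morphism of W-temperate series (here one invokes closure under restriction to hyperplane sections) with $\Ker(\wdh{\phi_j})=(g_j)$, a height-one prime, and $\Gr(\phi_j)=\Fr(\phi_j)=p=(p+1)-1$.

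So everything comes down to the codimension-one case: a morphism $\psi:\KK\nl\y\nr\to\KK\nl\ub\nr$ in $n'$ variables with $\Gr(\psi)=\Fr(\psi)=n'-1$ and $\Ker(\wdh\psi)=(\wdh f)$, $\wdh f$ irreducible, normalised by Weierstrass preparation so that $\wdh f=P(y';y_{n'})$ for a Weierstrass polynomial $P$ of degree $d$ in $y_{n'}$ over $\KK\lb y'\rb$; the goal is that the coefficients of $P$ lie in $\KK\nl y'\nr$ (then $\wdh f\in\Ker(\psi)\neq 0$). With $\psi_0:=\psi|_{\KK\nl y'\nr}$ (injective), $\psi(y_{n'})$ is a root of the polynomial $P^{\wdh{\psi_0}}\in\KK\lb\ub\rb[T]$ gotten by applying $\wdh{\psi_0}$ to the coefficients of $P$, is algebraic over $L_0:=\Frac\bigl(\psi_0(\KK\nl y'\nr)\bigr)$, and has degree $d$ over the larger field $K_0:=\Frac\bigl(\wdh{\psi_0}(\KK\lb y'\rb)\bigr)$. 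The substance of the theorem is that the degree does not drop in passing from $L_0$ to $K_0$, i.e.\ $[L_0(\psi(y_{n'})):L_0]=d$ — equivalently, the $d$ formal conjugates of $\psi(y_{n'})$ have all their elementary symmetric functions in $L_0$. Granting this, $P^{\wdh{\psi_0}}$ is the minimal polynomial of $\psi(y_{n'})$ over $L_0$, so its coefficients lie in $L_0=\wdh{\psi_0}\bigl(\Frac(\KK\nl y'\nr)\bigr)$, hence the coefficients of $P$ lie in $\Frac(\KK\nl y'\nr)\cap\KK\lb y'\rb=\KK\nl y'\nr$, as wanted.

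Proving that the degree does not drop is where essentially all the work lies — this is Gabrielov's rank theorem in the W-temperate framework — and it is here that the remaining two axioms, together with the hypotheses that $\KK$ is algebraically closed and uncountable, come into play. The plan: apply the Abhyankar--Jung theorem to the (convergent) discriminant of $P$ to pass to a ramified cover over which the roots of $P^{\wdh{\psi_0}}$ acquire a Puiseux description; use Zariski's main theorem to control the normalisation of $\psi_0(\KK\nl y'\nr)$ inside $\KK\nl\ub\nr$ and to obtain the relevant finiteness; carry out the construction on a suitable local blow-up of the $\ub$-space and then use closure under local blowing-down to descend the constructed objects to $\KK\nl\ub\nr$, and temperateness (closure under evaluation by algebraic elements) to certify that they remain in the family; finally read off that the symmetric functions of the conjugates of $\psi(y_{n'})$ lie in $L_0$. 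I expect the main obstacle to be exactly this passage between the formal and convergent worlds: a priori the conjugates of $\psi(y_{n'})$ live only in an algebraic closure of the formal field $K_0$, and it is precisely the hypothesis $\Gr(\psi)=\Fr(\psi)=n'-1$ — that the convergent map $\psi$ is generically dominant onto the hypersurface $\{\wdh f=0\}$ — that forces them into the convergent world; arranging this transport while keeping every intermediate object inside the W-temperate family is what the three axioms are tailored to make possible.
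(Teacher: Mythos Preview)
Your reduction to the codimension-one case is essentially the paper's (stated directly rather than by contradiction). One misattribution: restricting $\phi$ to $\KK\nl x_1,\dots,x_p,x_j\nr$ needs no axiom---axiom~(ii) of Definition~\ref{temperate_fam} is not about restricting to coordinate subrings but about the behaviour of a \emph{divergent} series under $x_n\mapsto\lambda x_1$, and it is used elsewhere entirely.

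The genuine gap is that your final paragraph is a wish-list rather than a proof, and several items on it are wrong. The discriminant of $P$ has \emph{formal} coefficients (the coefficients of $P$ are precisely what you are trying to prove lie in $\KK\nl y'\nr$), so you cannot call it convergent; one must resolve it by point blowings-up of the \emph{source} $\x$-space, not the $\ub$-space. Zariski's main theorem plays no role. Most seriously, you omit the reduction of the codimension-one problem to two source variables (Theorem~\ref{thm:red2}), carried out via Bertini's theorem and axiom~(ii)---this is the only place that axiom and the uncountability of $\KK$ are used, and it is indispensable because the subsequent machinery lives in dimension two. After that reduction the paper proceeds by Newton--Puiseux--Eisenstein factorisation in projective rings $\PP_h\lb\x\rb$, shows each irreducible factor extends formally along the exceptional divisor of a resolution of $\Delta_P$ (Theorem~\ref{thm:extension}), and then proves the key propagation result Theorem~\ref{local_to_semiglobal}---this is where axiom~(iii) enters, and the argument is a delicate comparison of the Puiseux roots of $P$ after blow-up with the algebraic roots of the weighted-homogeneous polynomial $\Gamma$ governing the Newton--Puiseux expansion. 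An induction on the blow-up tree (Proposition~\ref{prop:inductivescheme}) then pulls a temperate factor back to the origin. None of this mechanism is visible in your sketch; the heuristic ``symmetric functions of the conjugates lie in $L_0$'' correctly restates the goal but gives no indication of how to reach it.
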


This result is proved in $\S$\ref{sec:Proof}. It generalizes the original rank Theorem of Gabrielov \cite{Ga2}, which concerns the case where $\KK\nl \x \nr$ stands for the family of complex analytic function germs. We rely on \cite{BCR} for a presentation of the importance and consequences of the Theorem to local analytic geometry and commutative algebra. The original proof of Gabrielov's rank Theorem is considered to be very difficult, cf. \cite[pp. 1]{Iz}. We have recently provided an alternative proof in \cite{BCR} by developing geometric-formal techniques inspired by works of Gabrielov \cite{Ga2} and Tougeron \cite{To2}. One of the difficulties involved in the proof is the intricate interplay between algebraic geometry and complex analysis. Our new result greatly simplifies and shortens the proof by addressing this difficulty: the proof of Theorem \ref{thm:TemperateRank} follows from algebraic geometry methods; complex analysis is only used in order to show that complex analytic functions form a W-temperate family, see $\S\S$ \ref{ssec:ComplexAnalyticTemperate}. As a mater of fact, we systematically generalize the arguments introduced in \cite{BCR} to their most general context, which demand us to introduce new commutative algebra arguments. It seems likely that the discussion of rank Theorems for non W-temperate families will demand a complete different strategy. In order to motivate this discussion, we provide a family of local rings of interest to function theory and tame geometry (that is, families of quasianalytic Denjoy-Carleman functions and families of $C^{\infty}$-definable functions over an o-minimal and polynomially bounded structure) where the rank Theorem does not hold, see $\S$\ref{ssec:Quasianalytic}.

As a Corollary of Theorem \ref{thm:TemperateRank}, we prove rank Theorems for convergent series in arbitrary complete valued (not necessarily algebraically closed) fields of characteristic zero $(\mathcal{K}, |\cdot|)$. In fact, recall that a power series $f=\sum_{\a\in\N^n}f_\a\x^\a\in\KK\lb \x\rb$ is \emph{convergent} if there exist real positive numbers $A$, $B>0$ such that
\[
\forall\a\in\N^n,\ |f_\a|A^{|\a|}\leq B.
\]
We denote by $\KK\{x_1,\ldots, x_n\}$ the subring of formal power series which are convergent (we follow here the convention used in commutative algebra, c.f. \cite{AM}, instead of non-archimedean analytic geometry, where $\KK\{x_1,\ldots, x_n\}$ would stand for convergent series with radius $1$). For a morphism $\phi:\KK\{\x\}\lgw \KK\{\ub\}$ of convergent power series, we denote by $\Ar(\phi)$ the analytic rank that is equal to $\dim\left(\fract{\KK\{\x\}}{\Ker(\phi)}\right)$.

\begin{corollary}[Gabrielov's rank Theorem for ring of convergent power series]\label{thm:Gabrielov_convergent}
Let $\KK$ be a complete valued field of characteristic zero.
Let $\phi: \KK\{\x\}\lgw \KK\{\ub\}$ be a morphism of convergent power series. Then
\[
\Gr(\phi)=\Fr(\phi)\Longrightarrow \Fr(\phi)=\Ar(\phi).
\]
\end{corollary}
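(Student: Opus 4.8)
The plan is to deduce the corollary from Theorem~\ref{thm:TemperateRank} by base change to an uncountable algebraically closed field, using that convergent power series over such a field form a W-temperate family (which is one of the asserted examples). Concretely, given the morphism $\phi:\KK\{\x\}\lgw\KK\{\ub\}$ over a complete valued field $\KK$ of characteristic zero, first I would reduce to the case where $\KK$ is algebraically closed and uncountable: the completion $\widehat{\KK}$ of an algebraic closure of $\KK$, equipped with the extended absolute value, is a complete valued field of characteristic zero, and by enlarging further (for instance by a transcendental extension of uncountable transcendence degree, completed) one obtains an uncountable algebraically closed complete valued field $\LL\supseteq\KK$. One then needs that the three ranks behave well under the flat base change $\KK\{\x\}\to\LL\{\x\}$; namely, writing $\phi_\LL:\LL\{\x\}\to\LL\{\ub\}$ for the induced morphism, one must check $\Gr(\phi_\LL)=\Gr(\phi)$ (clear, since the Jacobian matrix is unchanged and rank is insensitive to field extension of the fraction field), $\Fr(\phi_\LL)=\Fr(\phi)$, and $\Ar(\phi_\LL)=\Ar(\phi)$. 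The equality $\Fr(\phi_\LL)=\Fr(\phi)$ follows from faithful flatness of $\KK\lb\x\rb\to\LL\lb\x\rb$, which gives $\Ker(\wdh{\phi_\LL})=\Ker(\wdh{\phi})\otimes_{\KK\lb\x\rb}\LL\lb\x\rb$ and preserves Krull dimension of the quotient; the equality $\Ar(\phi_\LL)=\Ar(\phi)$ follows from faithful flatness of $\KK\{\x\}\to\LL\{\x\}$ together with the fact that convergent series form a Weierstrass family, so that dimension of the quotient is again preserved.

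Once the reduction to uncountable algebraically closed $\LL$ is in place, Theorem~\ref{thm:TemperateRank} applies directly to $\phi_\LL$, since $(\LL\{x_1,\ldots,x_r\})_{r\in\N}$ is a W-temperate family. Thus from $\Gr(\phi)=\Fr(\phi)$ we get $\Gr(\phi_\LL)=\Fr(\phi_\LL)$, hence $\Gr(\phi_\LL)=\Fr(\phi_\LL)=\Tr(\phi_\LL)$, where here $\Tr$ denotes the Weierstrass rank for the family of convergent series, i.e. $\Tr(\phi_\LL)=\dim(\LL\{\x\}/\Ker(\phi_\LL))=\Ar(\phi_\LL)$. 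Descending back along the base change via the rank identities $\Fr(\phi_\LL)=\Fr(\phi)$ and $\Ar(\phi_\LL)=\Ar(\phi)$ yields $\Fr(\phi)=\Ar(\phi)$, as desired. Note the corollary only claims $\Fr(\phi)=\Ar(\phi)$, not the three-way equality with $\Gr$, so even the hypothesis $\Gr(\phi)=\Fr(\phi)$ is used only to trigger the Theorem; no extra work beyond the base change bookkeeping is needed.

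The main obstacle will be the base-change bookkeeping for the ranks, and in particular verifying that $\KK\{\x\}\to\LL\{\x\}$ is (faithfully) flat with $\LL\{\x\}/\m_{\KK\{\x\}}\LL\{\x\}$ behaving correctly, so that $\dim(\LL\{\x\}/\Ker(\phi_\LL))=\dim(\KK\{\x\}/\Ker(\phi))$. One delicate point is that $\LL\{\x\}$ is not simply $\KK\{\x\}\otimes_\KK\LL$, since extending scalars changes which series converge; the correct statement is that $\LL\{\x\}$ is a suitable completion of $\KK\{\x\}\otimes_\KK\LL$ with respect to a Gauss-type norm, and one must argue flatness at that level, typically by reducing to the Weierstrass division theorem available in any Weierstrass family (which gives, inductively on the number of variables, that these rings are Noetherian regular local and that the relevant maps are flat). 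A second, more classical point is the construction of an uncountable algebraically closed complete valued extension $\LL$ of a given complete valued field $\KK$ of characteristic zero; this is standard (take the completed algebraic closure, then a completed purely transcendental extension of large transcendence degree, and iterate $\omega$ times), but it should be stated carefully since the paper's Theorem requires uncountability of the base field. Once these two technical inputs are isolated, the deduction of Corollary~\ref{thm:Gabrielov_convergent} is immediate.
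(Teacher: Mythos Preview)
Your overall strategy---base change to an algebraically closed complete field and invoke Theorem~\ref{thm:TemperateRank}---is the right idea, but the execution differs from the paper's and, as written, has a gap in the descent step.

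The paper proceeds differently: it first observes (Remark~\ref{rk:red1}) that the reduction of Theorem~\ref{thm:TemperateRank} to Theorem~\ref{thm:red1} carried out in \S\ref{red:sec} uses only the Weierstrass axioms, hence applies verbatim to $(\KK\{x_1,\dots,x_n\})_n$ over any complete valued $\KK$. This reduces Corollary~\ref{thm:Gabrielov_convergent} to Theorem~\ref{thm:red1_Gab}, a statement about a single Weierstrass polynomial $P\in\KK\lb\x\rb[y]$ generating $\Ker(\wdh\phi)$. Only then does one pass to $\ovl\KK$, the completion of an algebraic closure of $\KK$: there the family is W-temperate (Proposition~\ref{prop:cvgt-W}), so the already-proved Theorem~\ref{thm:red1} gives $P\in\ovl\KK\{\x\}[y]$, and one concludes via the trivial identity $\ovl\KK\{\x\}\cap\KK\lb\x\rb=\KK\{\x\}$. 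No comparison of ranks under base change is ever needed.

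In your route, the assertion that $\Ar(\phi_\LL)=\Ar(\phi)$ ``follows from faithful flatness of $\KK\{\x\}\to\LL\{\x\}$'' is not justified: faithful flatness (together with $\m_{\KK\{\x\}}\LL\{\x\}=\m_{\LL\{\x\}}$) gives $\dim\bigl(\LL\{\x\}/\Ker(\phi)\LL\{\x\}\bigr)=\Ar(\phi)$, and since $\Ker(\phi)\LL\{\x\}\subset\Ker(\phi_\LL)$ one obtains only $\Ar(\phi_\LL)\le\Ar(\phi)$. But to conclude $\Ar(\phi)\le\Fr(\phi)$ you need the \emph{opposite} inequality $\Ar(\phi)\le\Ar(\phi_\LL)$; equivalently you must show $\Ker(\phi_\LL)=\Ker(\phi)\LL\{\x\}$, i.e.\ that the injection $\KK\{\x\}/\Ker(\phi)\hookrightarrow\KK\{\ub\}$ remains injective after passing to $\LL\{\cdot\}$. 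Since $\LL\{\x\}\neq\KK\{\x\}\otimes_\KK\LL$, this is not a formal consequence of flatness and requires a genuine argument---one that the paper's reduction-first strategy sidesteps entirely. A minor side remark: your detour through an uncountable transcendental extension is unnecessary, since any non-trivially valued complete field is already uncountable (as noted in the proof of Proposition~\ref{prop:cvgt-W}), so $\ovl\KK$ itself suffices.
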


In particular, this theorem applies to fields as $\KK=\R$ or $\C$, as discussed above and previously proved in \cite{Ga2,BCR}, but also to non-archimedean fields such as $\Q_p$, $\C_p$ and $\k(\!(t)\!)$. We prove this result in $\S\S$\ref{ssec:ComplexAnalyticTemperate}.

\begin{remark}\label{rk:TemperateDualArtin}
Theorem \ref{thm:TemperateRank} can be seen as a "dual" of the Artin approximation Theorem. More precisely, let $\phi$ be such that $\Gr(\phi)=\Fr(\phi)$. Then, 
$$
\forall F(\x)\in\KK\lb\x\rb, \text{ such that } \ F(\phi(\x))=0,
$$
$$
\forall c\in\N, \exists F_c(\x)\in\KK\nl\x\nr, F_c(\phi(\x))=0 \text{ and } F(\x)-F_c(\x)\in (\x)^c.
$$
where $\phi(\x) = (\phi(x_1),\ldots,\phi(x_n))$. Indeed, the ideals $\Ker(\phi)$ and $\Ker(\wdh\phi)$ are prime ideals of $\KK\nl\x\nr$ and $\KK\lb\x\rb$ respectively, and the equality $\Fr(\phi)= \Tr(\phi)$ is equivalent to the equality of the heights of these two ideals. Since $\KK\nl\x\nr$ is Noetherian (see Proposition \ref{rk:Wtemp} \ref{prop:hensel}), the height of $\Ker(\phi)\KK\lb\x\rb$ equals the height of $\Ker(\phi)$. Now, by Artin Approximation Theorem, see Corollary \ref{cor:Artin2}, $\Ker(\phi)\KK\lb\x\rb$ is again a prime ideal, so the equality $\Fr(\phi)= \Tr(\phi)$ is equivalent to the equality
\(
\Ker(\phi)\KK\lb\x\rb=\Ker(\wdh\phi).
\)
It is well known that, since $\KK\nl\x\nr$ is Noetherian, $\Ker(\phi)\KK\lb\x\rb$ is the closure of $\Ker(\phi)$ in $\KK\lb\x\rb$ for the $(\x)$-adic topology, and we conclude easily.
\end{remark}

\bigskip

We would like to thank Charles Favre for sketching the proof of Lemma \ref{lem:max_principle}, and Antoine Ducros and Lorenzo Fantini for answering our questions about non-archimedean fields and Berkovich geometry.
We would also like to thank Edward Bierstone for useful discussions. 
This work was supported by the CNRS project IEA00496  PLES. The first author is supported by the project ``Plan d’investissements France 2030", IDEX UP ANR-18-IDEX-0001. The second author thanks the grant NKFIH KKP 126683.

\section{Weierstrass Temperate families}
\subsection{W-Temperate families} 



Let $\KK$ be a field of characteristic zero. For every $n\in \mathbb{N}$, we denote by $(x_1,\ldots,x_n)$ indeterminates; we will use the compact notation $\x = (x_1,\ldots,x_n)$ and $\x' = (x_1,\ldots,x_{n-1})$ whenever there is no risk of confusion on $n$. We start by recalling the notion of Weierstrass family introduced in  \cite{DL}:

\begin{definition}\label{W-fam}
A \emph{Weierstrass family (over $\KK$)}, or just a W-family, of rings is a family $(\KK\nl x_1,\ldots, x_n\nr)_{n\in\N}$ of $\KK$-algebras such that,
\begin{enumerate}[label=\roman*)]
\item\label{axiom:inclusions} For every $n$,
\[
\KK[\x]\subset \KK\nl \x \nr\subset \KK\lb \x \rb.
\]

\item\label{axiom:intersection_n+m} For every $n$ and $m$, denoting $\x=(x_1,\ldots, x_n)$ and $\bold{y}= (y_1,\ldots,y_m)$:
\[
\KK\nl \x ,\bold{y}\nr\cap\KK\lb \x\rb=\KK\nl \x \nr.
\]
\item\label{axiom:permutation} For any permutation $\s$ of $\{1,\ldots, n\}$, and any $f\in\KK\nl \x\nr$, 
\[
f(x_{\s(1)},\ldots, x_{\s(n)})\in\KK\nl \x \nr.
\]

\item\label{axiom:unit} If $f\in\KK\nl \x \nr$ with $f(0)\neq 0$, then $f$ is a unit in $\KK\nl \x \nr$.

\item\label{axiom:weierstrass} The family is closed by \emph{Weierstrass division}. More precisely, let $F\in\KK\nl \x \nr$ be such that $F(0,x_{n})=x_{n}^du(x_{n})$ where $u(0)\neq 0$. For every $G\in\KK\nl \x \nr$,
\[
G=FQ+R,
\]
where $Q\in\KK\nl \x \nr$ and $R\in\KK\nl \x'\nr[x_{n+1}]$, $\deg_{x_{n}}(R)<d$, are unique.
\end{enumerate}
\end{definition}


A W-family satisfies several extra well-known properties which we recall in $\S\S$\ref{ssec:WProperties} below; in what follows we use these properties. Let us now provide the definition of W-temperate family:

\begin{definition}\label{temperate_fam}
Let $\KK$ be an uncountable algebraically closed  field of characteristic zero. A \emph{Weierstrass temperate family (over $\KK$)}, or just a W-temperate family, of rings is a Weierstrass family $(\KK\nl x_1,\ldots, x_n\nr)_{n\in\N}$ over $\KK$ satisfying the following three properties:
\begin{enumerate}[label=\roman*)]
\item\label{axiom:monomialcomposition} Closure by local blowings-down: For every $f\in \KK\lb \x \rb$, $n>1$, we have
\[
f(\x',x_1x_n)\in \KK\nl \x \nr \Longrightarrow f(\x)\in \KK\nl \x \nr.
\]
\item\label{axiom:Abhyankar-Moh} Closure by generic hyperplane sections: Let $F\in\KK\lb \x \rb\setminus \KK\nl \x \nr$. Set
\[
W:=\left\{\la\in \KK\mid  F(\x',\la x_1)\in \KK\nl\x'\nr\right\}.
\]
Then the set $\KK\setminus W$ is uncountable.

\item\label{axiom:last} Temperateness: Let $\x = (x_1,x_2)$ and $\alpha \in \N^{\ast}$. Let $\G(t,z)\in\KK[t,z]$ be an irreducible polynomial that is monic in $z$, and assume that $\G$ splits in $\KK\nl t\nr[z]$. Let $\g(t)$ and $\g'(t)\in\KK\nl t\nr$ be two roots of $\G$ and consider
\[
 P(\x,z)= \sum_{k \in \mathbb{N}} x_1^{k} p_k(x_2,z)\in \KK\lb \x\rb[z],
\]
where $ p_k(x_2,z) \in \KK[x_2,z]$ is such that $\deg_{x_2}(p_k) \leq \alpha k$ for every $k\in \mathbb{N}$. Then
\[
P(\x,\g(x_2))\in\KK\nl \x \nr\Longrightarrow P(\x,\g'(x_2))\in\KK\nl \x\nr.
\]
\end{enumerate}
\end{definition}

Note that properties \ref{axiom:Abhyankar-Moh} and \ref{axiom:last} are used only once in the paper, see $\S\S$\ref{ssec:RedLowDim} and the proof of Theorem \ref{local_to_semiglobal} respectively. 


\subsection{Weierstrass morphisms and ranks} 

We start by providing a detailed definition of the morphisms we consider:

\begin{definition}\label{def:TemperateMorphism}
Let $(\KK\nl x_1,\ldots, x_n\nr)_n$ be a Weierstrass family (resp. a W-temperate familly) and let $\phi:\KK\nl \x \nr\lgw \KK\nl \ub \nr$ be a morphism of local rings. We call $\phi$ a \emph{morphism of rings of Weierstrass power series} (resp. \emph{of W-temperate power series}) if there exist  power series $\phi_1(\ub)$, \ldots, $\phi_n(\ub)\, \in (\ub)\KK\nl \ub\nr$ such that
\[
\forall f(\x)\in\KK\nl\x\nr,\ \ \phi(f)=f(\phi_1(\ub),\ldots, \phi_n(\ub)).
\]
\end{definition}

For such a morphism, we have introduced in the introduction three notions of ranks: generic, formal and Weierstrass (or temperate), see Equation \eqref{eq:DefRanks}. Note that the generic and formal ranks can be introduced, in an obvious way, for general morphisms of power series rings $\psi:\KK\lb \x\rb\lgw \KK\lb \ub\rb$. Let us start by showing that these ranks are well-defined:

\begin{lemma}\label{lem:Basic}
Let $\phi:\KK\nl \x \nr\lgw \KK\nl \ub \nr$ be a morphism of Weierstrass power series. Then $\Gr(\phi)$, $\Fr(\phi)$ and $\Wr(\phi)$ are natural numbers such that:
\[
\Gr(\phi)\leq \Fr(\phi) \leq \Wr(\phi).
\]
\end{lemma}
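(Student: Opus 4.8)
The plan is to prove the two inequalities $\Gr(\phi)\leq\Fr(\phi)$ and $\Fr(\phi)\leq\Wr(\phi)$ separately, and to first check that all three quantities are genuinely finite natural numbers. Finiteness of $\Fr(\phi)$ and $\Wr(\phi)$ is immediate: $\Ker(\wdh\phi)$ and $\Ker(\phi)$ are (prime) ideals of the Noetherian local rings $\KK\lb\x\rb$ and $\KK\nl\x\nr$ (Noetherianity coming from Proposition~\ref{rk:Wtemp}\,\ref{prop:hensel}), so the quotient rings are Noetherian local and their Krull dimensions are at most $n<\infty$. Finiteness of $\Gr(\phi)$ is clear since $\mathrm{Jac}(\phi)$ is an $m\times n$ matrix.

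For the inequality $\Fr(\phi)\leq\Wr(\phi)$: by definition $\Wr(\phi)=\dim(\KK\nl\x\nr/\Ker(\phi))$ and $\Fr(\phi)=\dim(\KK\lb\x\rb/\Ker(\wdh\phi))$. Since $\wdh\phi$ extends $\phi$, we have $\Ker(\phi)\KK\lb\x\rb\subseteq\Ker(\wdh\phi)$, hence a surjection $\KK\lb\x\rb/\Ker(\phi)\KK\lb\x\rb\twoheadrightarrow\KK\lb\x\rb/\Ker(\wdh\phi)$, which gives $\dim(\KK\lb\x\rb/\Ker(\wdh\phi))\leq\dim(\KK\lb\x\rb/\Ker(\phi)\KK\lb\x\rb)$. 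Now $\KK\lb\x\rb$ is a faithfully flat extension of the Noetherian local ring $\KK\nl\x\nr$ (it is its completion), so $\dim(\KK\lb\x\rb/\Ker(\phi)\KK\lb\x\rb)=\dim(\KK\nl\x\nr/\Ker(\phi))=\Wr(\phi)$, using that completion preserves the dimension of a quotient by an ideal. Combining, $\Fr(\phi)\leq\Wr(\phi)$.

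For the inequality $\Gr(\phi)\leq\Fr(\phi)$: set $r:=\Gr(\phi)$, the rank of $\mathrm{Jac}(\phi)=[\partial_{u_i}\phi(x_j)]$ over $\Frac(\KK\nl\ub\nr)$. After reindexing the $x_j$ we may assume the differentials $d(\phi(x_1)),\ldots,d(\phi(x_r))$ are linearly independent over $\Frac(\KK\lb\ub\rb)$ (the rank does not change under the field extension $\Frac(\KK\nl\ub\nr)\hookrightarrow\Frac(\KK\lb\ub\rb)$). I claim the images $\phi(x_1),\ldots,\phi(x_r)\in\KK\lb\ub\rb$ are then algebraically independent over $\KK$; indeed, if $H(\phi(x_1),\ldots,\phi(x_r))=0$ for some nonzero polynomial $H$ of minimal degree, differentiating and using the chain rule produces a nontrivial linear dependence among $d(\phi(x_1)),\ldots,d(\phi(x_r))$ over $\Frac(\KK\lb\ub\rb)$ — here we use that $\cha\KK=0$ so that $\partial H/\partial x_j$ is not identically zero for at least one $j$ for which $x_j$ actually appears. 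Consequently the composite $\KK[x_1,\ldots,x_r]\hookrightarrow\KK\lb\x\rb\xrightarrow{\wdh\phi}\KK\lb\ub\rb$ is injective, so $\Ker(\wdh\phi)\cap\KK[x_1,\ldots,x_r]=0$, which forces $\dim(\KK\lb\x\rb/\Ker(\wdh\phi))\geq r=\Gr(\phi)$.

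The main obstacle I expect is keeping the base-field subtleties straight in the generic-rank step: one must make sure the Jacobian rank is unaffected when passing from $\Frac(\KK\nl\ub\nr)$ to $\Frac(\KK\lb\ub\rb)$, and that "linearly independent differentials" correctly translates to "algebraically independent images", which is exactly where characteristic zero is essential. The dimension-theoretic step $\Fr\leq\Wr$ is routine commutative algebra (faithful flatness of completion plus invariance of dimension under completion), assuming the Noetherianity and Henselianity packaged in Proposition~\ref{rk:Wtemp}.
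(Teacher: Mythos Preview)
Your argument for $\Fr(\phi)\leq\Wr(\phi)$ is correct and in fact cleaner than the paper's: you use only that completion is faithfully flat and preserves dimension of quotients, whereas the paper invokes Artin approximation (to get primality of $\Ker(\phi)\KK\lb\x\rb$, which is not actually needed for the height comparison).

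However, your argument for $\Gr(\phi)\leq\Fr(\phi)$ has a genuine gap. You correctly show that $\phi(x_1),\ldots,\phi(x_r)$ are \emph{algebraically} independent over $\KK$, hence $\Ker(\wdh\phi)\cap\KK[x_1,\ldots,x_r]=0$. But this does \emph{not} force $\dim(\KK\lb\x\rb/\Ker(\wdh\phi))\geq r$. Concretely, take $n=r=2$, $\KK=\C$, and $\mathfrak p=(x_2-(e^{x_1}-1))\subset\C\lb x_1,x_2\rb$. Then $\C\lb x_1,x_2\rb/\mathfrak p\cong\C\lb x_1\rb$ has dimension $1$, yet $\mathfrak p\cap\C[x_1,x_2]=0$ because $x_1$ and $e^{x_1}-1$ are algebraically independent over $\C$. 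So intersecting trivially with the polynomial ring is too weak.

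What is needed is the stronger statement that the restriction of $\wdh\phi$ to the \emph{power series} subring $\KK\lb x_1,\ldots,x_r\rb$ is injective; then $\KK\lb x_1,\ldots,x_r\rb\hookrightarrow\KK\lb\x\rb/\Ker(\wdh\phi)$ and the dimension bound follows from standard dimension theory. This injectivity is not a formal consequence of the Jacobian having generic rank $r$ together with characteristic zero alone in the naive way you sketch; the paper obtains it by quoting \cite[Lemma~4.2]{Ga2}, whose proof (valid over any characteristic zero field) uses a derivation/order argument specific to formal power series. You should either reproduce that argument or cite it.
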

\begin{proof}
It is straightforward that $\Gr(\phi)$ is well-defined; $\Fr(\phi)$ and $\Wr(\phi)$ are well-defined since $\fract{\KK\lb \x\rb}{\Ker(\wdh{\phi})}$ and $\fract{\KK\nl\x\nr}{\Ker(\phi)}$ are Noetherian local rings. Next, consider a general morphism of power series ring $\psi:\KK\lb \x\rb\lgw \KK\lb \ub\rb$ and set $r=\Gr(\psi)$. Apart from re-ordering the coordinates, we may assume that the matrix $[\partial_{u_i} \phi(x_j)]_{i\leq m,j\leq r}$ has rank $r$. Therefore, if we set $R:=\KK\lb x_1,\ldots, x_r\rb$, $\psi_{|_R}$ is injective by \cite[Lemma 4.2]{Ga2} (whose proof remains valid over any characteristic zero field $\KK$). Thus $r=\dim(R)\leq \dim\left(\fract{\KK\lb \x\rb}{\Ker(\psi)}\right)$. This proves the first inequality. Finally, by Artin approximation Theorem \ref{Artin}, $\Ker(\phi)\KK\lb\x\rb$ is a prime ideal, and by \cite[Theorem 9.4]{Mat} the height of $\Ker(\phi)\KK\lb\x\rb$ is less than or equal to the height of $\Ker(\wdh\phi)$. We conclude that $\Fr(\phi)\leq\Wr(\phi)$.
\end{proof}

\begin{remark}
The proof of  Lemma \ref{lem:Basic} also shows the result for a general morphism of power series rings $\psi:\KK\lb \x\rb\lgw \KK\lb \ub\rb$, that is, its generic and formal ranks are well defined and:
\[
\Gr(\psi)\leq \Fr(\psi).
\]
\end{remark}

\begin{definition}\label{def:RegularWTmorphism}
Let $\phi:\KK\nl \x \nr\lgw \KK\nl \ub \nr$ be a morphism of Weierstrass power series. We say that $\phi$ is \emph{regular} (in the sense of Gabrielov) if $\Gr(\phi) = \Fr(\phi)$. 
\end{definition}

We finish this subsection by useful results about the ranks of a morphism of Weierstrass power series, which is a Weierstrass version of \cite[Prop. 2.2]{BCR}:

\begin{proposition}[{cf. \cite[Prop.2.2]{BCR}}]\label{prop:inv_ranks}
Let $\phi:\KK\nl \x \nr\lgw \KK\nl \ub \nr$ be a morphism of Weierstrass  power series. The ranks $\Gr(\phi)$, $\Fr(\phi)$ and $\Wr(\phi)$ are preserved if we compose $\phi$ with:
\begin{enumerate}
\item a morphism $\s:\KK\nl u_1,\ldots, u_m\nr \lgw \KK\nl u'_1,\ldots, u'_s\nr$ such that $\Gr(\s)=m$,
\item an injective finite morphism $\t:\KK\nl x'_1,\ldots, x'_n\nr\lgw \KK\nl x_1,\ldots,x_n\nr$,
\item an injective finite morphism $\t:\KK\nl x'_1,\ldots, x'_t\nr \lgw \fract{\KK\nl\x\nr}{\Ker(\phi)}$.
\end{enumerate}
\end{proposition}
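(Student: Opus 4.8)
The statement asserts invariance of the three ranks under three types of composition. I would treat each item separately, and in each case analyze how the kernels $\Ker(\phi)$, $\Ker(\wdh\phi)$, and the generic Jacobian rank transform.

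For item (1), composing with $\sigma$ on the target: the key point is that $\Gr(\sigma)=m$ means the formal morphism $\wdh\sigma:\KK\lb\ub\rb\to\KK\lb\ub'\rb$ is injective (by the same Jacobian argument used in the proof of Lemma \ref{lem:Basic}, via \cite[Lemma 4.2]{Ga2}). Hence $\Ker(\wdh{\sigma\circ\phi})=\Ker(\wdh\sigma\circ\wdh\phi)=\Ker(\wdh\phi)$, giving invariance of $\Fr$. The same injectivity applied to the convergent morphism $\sigma$ — which holds because injectivity can be checked after completion — gives $\Ker(\sigma\circ\phi)=\Ker(\phi)$, so $\Wr$ is preserved. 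For $\Gr$, one writes $\mbox{Jac}(\sigma\circ\phi)$ via the chain rule as a product involving $\mbox{Jac}(\sigma)$ and $\mbox{Jac}(\phi)$ composed with $\sigma$, and since $\mbox{Jac}(\sigma)$ has generic rank $m$ (full), multiplication by it does not drop rank generically; here I would need the standard fact that generic rank is preserved under the substitution $\ub\mapsto\sigma(\ub')$, which holds since this substitution is injective on fraction fields.

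For items (2) and (3), composing with an injective finite morphism $\tau$ on the source: an injective finite morphism between these rings induces an injective finite (hence integral, and in particular dimension-preserving) extension, and the relevant fact is that $\Ker(\phi\circ\tau)=\tau^{-1}(\Ker(\phi))$, so that $\tau$ induces an injective finite morphism $\fract{\KK\nl\x'\nr}{\Ker(\phi\circ\tau)}\hookrightarrow\fract{\KK\nl\x\nr}{\Ker(\phi)}$; since finite morphisms preserve Krull dimension, $\Wr(\phi\circ\tau)=\Wr(\phi)$. The same argument after completion, using that $\wdh\tau$ is again finite and injective (finiteness passes to completions, and injectivity is preserved because $\KK\nl\x'\nr\to\KK\lb\x'\rb$ is faithfully flat), gives $\Fr(\phi\circ\tau)=\Fr(\phi)$ — here one also invokes Artin approximation (Corollary \ref{cor:Artin2}) to know $\Ker(\phi)\KK\lb\x\rb$ is prime and to relate $\Ker(\wdh{\phi\circ\tau})$ with $\wdh\tau^{-1}(\Ker(\wdh\phi))$. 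For $\Gr$: a finite injective morphism between equidimensional regular rings of the same dimension has generically invertible Jacobian (it is generically étale in characteristic zero), so $\mbox{Jac}(\phi\circ\tau)=\mbox{Jac}(\phi)\cdot(\mbox{Jac}(\tau)$ composed appropriately$)$ has the same generic rank. For item (3), the morphism $\tau$ lands in the quotient $\fract{\KK\nl\x\nr}{\Ker(\phi)}$ rather than in $\KK\nl\x\nr$ itself; I would reduce to item (2) by first observing that $\phi$ factors through $\fract{\KK\nl\x\nr}{\Ker(\phi)}$ and then choosing a lift, or by working directly with the induced finite extension of quotient rings and noting that all three ranks only depend on the induced map on $\fract{\KK\nl\x\nr}{\Ker(\phi)}$ and its completion.

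\textbf{Main obstacle.} The delicate point is the behavior of the \emph{formal} rank under these compositions, because it requires controlling $\Ker(\wdh{\phi\circ(-)})$ versus the completion of $\Ker(\phi\circ(-))$; this is exactly where Artin approximation enters (to ensure primeness of extended ideals and compatibility of completions with these operations), and one must be careful that the finite morphisms $\tau$ stay finite and injective after completion. The generic-rank computations are essentially the chain rule plus the observation that full-rank factors don't drop rank; the subtlety there is purely that one is substituting into the matrix entries, so one must check the substitution is injective on fraction fields, which follows from injectivity/finiteness of $\sigma$ and $\tau$. I expect the proof to largely parallel \cite[Prop. 2.2]{BCR}, with the only new ingredients being bookkeeping to make sure every step is valid for an abstract Weierstrass family rather than specifically for convergent series.
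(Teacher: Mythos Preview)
Your proposal is correct and follows essentially the same architecture as the paper's proof: item~(1) via injectivity of $\sigma$ and $\wdh\sigma$ (from $\Gr(\sigma)=m$) plus the chain rule, items~(2)--(3) via the fact that finite injective extensions preserve Krull dimension, applied to both the Weierstrass and the formal level. A few minor differences are worth noting. First, your invocation of Artin approximation for the formal rank is unnecessary: the identity $\Ker(\wdh{\phi\circ\tau})=\wdh\tau^{-1}(\Ker(\wdh\phi))$ is immediate from $\wdh{\phi\circ\tau}=\wdh\phi\circ\wdh\tau$, and injectivity of $\wdh\tau$ follows (as you also say) from faithful flatness of completion; the paper instead deduces injectivity of $\wdh\tau$ by a dimension count, but your argument is cleaner. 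Second, for $\Gr$ in items~(2)--(3) the paper does not appeal to ``generically \'etale in characteristic zero'' but computes explicitly: it differentiates the integral equations $P_i(\tau(\x'),x_i)=0$ (resp.\ $=f_i\in\Ker(\phi)$ for~(3)) to exhibit vectors showing $J_\tau$ has full generic rank, and in~(3) uses that $\phi(\partial f_i/\partial x_j)\cdot J_\phi=0$ so the extra terms do not affect the rank of $J_\phi\cdot\phi(J_\tau)$. Your conceptual shortcut works for~(2), but for~(3) the target $\KK\nl\x\nr/\Ker(\phi)$ need not be regular, so the explicit computation (or your ``choose a lift'' suggestion, which is exactly what the paper does by adjoining the $x'_i$ to the $x_j$) is what actually carries the argument.
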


\begin{proof}
We start by proving $(1)$. Note that it is straightforward from linear algebra that $\Gr(\sigma \circ \phi) = \Gr(\phi)$. In order to prove the other two equalities, it is enough to prove that $\sigma$ and $\wdh{\s}$ are injective morphisms of local rings. This follows from Lemma \ref{lem:Basic}, since $ m = \Gr(\s) \leq \Fr(\s) \leq\Wr(\sigma) \leq \dim\KK \nl u_1,\ldots, u_m \nr = m$.

We now prove that $\Wr(\phi) = \Wr(\phi \circ \tau)$ under the hypothesis given in (2) and (3). Indeed, we have $\Ker(\phi\circ\t)=\Ker(\phi)\cap \KK\nl\x'\nr$ because $\tau$ is injective. Since $\KK\nl\ub\nr$ is an integral domain, $\Ker(\phi)$ and $\Ker(\phi\circ\t)$ are prime ideals. Thus, by the Going-Down theorem for integral extensions \cite[Theorem 9.4ii]{Mat}, we have that $\het(\Ker(\phi\circ\t))\leq\het(\Ker(\phi))$, thus $\Wr(\phi)\leq\Wr(\phi\circ\t)$. On the other hand, we have the equality $\Wr(\phi)=\Wr(\phi\circ\t)$ because $\het(\Ker(\phi\circ\t))=\het(\Ker(\phi))$ by \cite[Theorem 9.3]{Mat}.

We now prove that $\Fr(\phi) = \Fr(\phi \circ \tau)$ under the hypothesis given in (2). Indeed, since $\t$ is finite, $\wdh\t$ is also finite by the Weierstrass division Theorem (see for instance \cite[Cor. 1.10]{BCR} for this claim). Moreover, we have 
\[
\dim(\KK\lb \x'\rb)-\het(\Ker\wdh{\t})=\dim(\KK\lb\x\rb) =n
\]
since finite morphisms preserve the dimension and $\t$ is injective. But $\het(\Ker(\wdh{\t}))=0$ if and only if $\Ker(\wdh{\t})=(0)$ because $\KK\nl\x'\nr$ is an integral domain. Thus, $\wdh{ \t}$ is injective and $\Fr(\phi\circ\t)=\Fr(\phi)$.

Now we prove that $\Fr(\phi) = \Fr(\phi \circ \tau)$ under the hypothesis given in (3). We denote by $\wdh \t'$ the morphism induced by $\wdh\t$:
$$\fract{\KK\lb x'_1,\ldots, x'_t\rb}{\wdh\t^{-1}(\Ker(\wdh\phi))} \lgw \fract{\KK\lb\x\rb}{\Ker(\wdh\phi)}.$$
As in the previous case, since $\t$ is finite, $\wdh\t$ is also finite, therefore $\wdh\t'$ is finite. Moreover, by definition, $\wdh\t'$ is injective. Thus, by  \cite[Theorem 9.3]{Mat}, we have $\Fr(\phi\circ\t)=\Fr(\phi)$.

We now turn to the proof of $\Gr(\phi) = \Gr(\phi \circ \tau)$. We start by (2). Let $J_\phi$ and $J_\t$ denote the Jacobian matrices of $\phi$ and $\t$. Then we have $J_{\phi\circ\t}=J_\phi  \cdot\phi(J_\t)$; note that it is enough to prove that the hypothesis imply that $\Gr(\t) = n$ in order to conclude by standard linear algebra. Indeed, since $\t$ is finite and injective, for every $i\in\{1,\ldots, n\}$, there is a monic polynomial $P_i(\x',x_i)\in\KK\nl\x'\nr[x_i]$ such that
$$
P_i(\t_1(\x'),\ldots, \t_n(\x'),x_i)=0 \text{ and }\dfrac{\partial P_i}{\partial x_i}(\t_1(\x'),\ldots, \t_n(\x'),x_i) \neq 0.
$$
Therefore, for every $i$ and $j$, we have
$$\sum_{\ell=1}^n\frac{\partial P_i}{\partial x'_\ell}(\t(\x),x_i)\frac{\partial \t_\ell(\x)}{\partial x_j}=\left\{\begin{array}{ccc}-\dfrac{\partial P_i}{\partial x_i} & \text{ if } &i=j\\ 0 & \text{ if }& i\neq j\end{array}\right.$$
Thus 
$$
\begin{bmatrix} \dfrac{\partial P_i}{\partial x'_1} & \cdots& \dfrac{\partial P_i}{\partial x'_n}\end{bmatrix}\cdot J_\t=-\dfrac{\partial P_i}{\partial x_i}\cdot e_i
$$
where $e_i$ is the vector whose coordinates are zero except the $i$-th one which is equal to 1. In particular $J_\t$ is generically a matrix of maximal rank, that is $\Gr(\tau) = n$, so $\Gr(\phi)=\Gr(\phi\circ\t)$ by standard linear algebra. This proves (2).

Finally, let us finish the proof of (3). By adding  the $x'_i$ to the $x_j$, we can assume that $x'_i=x_i$ for $i\leq t$. By assumption, for every $i\in\{1,\ldots, n\}$, there is a monic polynomial $P_i(\x',x_i)\in\KK\nl\x'\nr[x_i]$ such that
$$
P_i(\t_1(\x),\ldots, \t_n(\x),x_i)=f_i(\x)\in\Ker(\phi) \text{ and }\dfrac{\partial P_i}{\partial x_i}(\t_1(\x'),\ldots, \t_n(\x'),x_i) \notin \Ker(\phi).
$$
Thus 
$$
\begin{bmatrix} \dfrac{\partial P_i}{\partial x'_1} & \cdots& \dfrac{\partial P_i}{\partial x'_n}\end{bmatrix}\cdot J_\t=-\dfrac{\partial P_i}{\partial x_i}\cdot e_i+\begin{bmatrix} \dfrac{\partial f_i}{\partial x_1}(\x) & \cdots & \dfrac{\partial f_i}{\partial x_n}(\x)\end{bmatrix}
$$
Since $f_i\in\Ker(\phi)$, we have $f_i(\phi(\ub))=0$. By differentiation we obtain
$$\forall j=1,\ldots, m,\ \ \sum_{k=1}^n\frac{\partial f_i}{\partial x_k}(\phi(\ub))\frac{\partial \phi_k(\ub)}{\partial u_j}=0,$$
that is, 
$$\begin{bmatrix} \dfrac{\partial f_i}{\partial x_1}(\phi(\ub)) \ \cdots\  \dfrac{\partial f_i}{\partial x_n}(\phi(\ub))\end{bmatrix}\cdot J_\phi=0.$$
This proves that the generic rank of $J_{\phi\circ\t}= J_\phi \cdot \phi(J_\t)$ is the rank  of $J_\phi$.
\end{proof}


\subsection{Properties of Weierstrass families}\label{ssec:WProperties}

We now recall several useful properties of W-families which are either proved in \cite{DL,Ro1} (see precise references in the proof), or which follow easily from classical results:

\begin{proposition} \label{rk:Wtemp}
 Let $(\KK\nl x_1,\ldots, x_n\nr)_{n\in\N}$ be a Weierstrass family. Then the following properties are satisfied:
\begin{enumerate}[label=\roman*)]
\item \label{prop:hensel} For every $n$, $\KK\nl \x \nr$ is a Henselian, Noetherien, UFD  regular local ring whose maximal ideal is generated by $(x_1,\ldots, x_n)$, and completion is $\KK\lb \x \rb$.
\item\label{prop:composition} For  $f\in \KK\nl t, \x \nr$ and any $g\in (\x) \, \KK\nl \x \nr$,
\(
f(g,\x)\in \KK\nl \x \nr.
\)
\item\label{rk:ramif} For every $f\in \KK\lb \x \rb$, and any $q\in\N^*$, we have
\[
f(\x', x_n^q)\in\KK\nl \x \nr\Longrightarrow f(\x)\in \KK\nl \x\nr.
\]
\item\label{rk:division} For every $n$ and $k\leq n$,
\[
\KK\nl \x \nr\cap (x_k)\, \KK\lb \x \rb=(x_k)\, \KK\nl \x \nr.
\]
\item\label{Wprep} Weierstrass preparation Theorem: Let $f\in\KK\nl \x \nr$ be such that $f(0,\ldots, 0,x_n)\neq 0$ has order $d$ in $x_n$. Then there exists a unit $U$ and a Weierstrass polynomial $P=x_n^d+a_1(\x')x_n^{d-1}+\cdots+a_d(\x')$ such that
\[
f(\x)=U(\x)\cdot (x_n^d+a_1(\x')x_n^{d-1}+\cdots+a_d(\x')).
\]
\item\label{rk:noethernorm} Noether Normalization: Let $A=\fract{\KK\nl \x \nr}{I}$ where $I$ is an ideal of $\KK\nl \x \nr$. Then, apart from a linear change of indeterminates $x_1$, \ldots, $x_n$, there exists an integer $r>0$ such that the canonical morphism 
\[
\KK\nl x_1,\ldots, x_r\nr\lgw \fract{\KK\nl x_1,\ldots, x_n\nr}{I}
\]
is finite. Moreover, since the dimension does not change under finite morphisms, if $\dim\left(\fract{\KK\nl \x\nr}{I}\right)=r$, then $\operatorname{ht}(I)=n-r$.
\end{enumerate}
\end{proposition}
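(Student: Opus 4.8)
The plan is to derive all six statements from the Weierstrass division property (axiom~\ref{axiom:weierstrass} of Definition~\ref{W-fam}) together with the inclusions~\ref{axiom:inclusions}, citing \cite{DL,Ro1} for the de-ramification lemma~\ref{rk:ramif} and \cite{Mat} for the standard commutative-algebra facts. I would treat them in the order \ref{rk:division}, \ref{Wprep}, \ref{prop:composition}, \ref{prop:hensel}, \ref{rk:ramif}, \ref{rk:noethernorm}, since the later items use the earlier ones.

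Items~\ref{rk:division}, \ref{Wprep} and \ref{prop:composition} are short consequences of Weierstrass division. For \ref{rk:division}, after a permutation assume $k=n$; dividing by $x_n$ itself (a Weierstrass divisor with $d=1$), every $f\in\KK\nl\x\nr$ writes $f=x_nQ+R$ with $Q\in\KK\nl\x\nr$ and $R\in\KK\nl\x'\nr$, and the hypothesis $f\in(x_n)\KK\lb\x\rb$ forces $R=f(\x',0)=0$, whence $f\in(x_n)\KK\nl\x\nr$ (the reverse inclusion being clear). For \ref{Wprep}, divide $x_n^d$ by $f$ to get $x_n^d=fQ+R$ with $\deg_{x_n}R<d$; restricting to $\x'=0$ gives $x_n^d=f(0,\dots,0,x_n)Q(0,\dots,0,x_n)+R(0,\dots,0,x_n)$, and comparing $x_n$-orders (the first summand on the right has order $\geq d$, while $R(0,\dots,0,x_n)$ has degree $<d$) forces $R(0,\dots,0,x_n)=0$; hence the coefficients of $R$ lie in $(\x')\KK\nl\x'\nr$ while $Q(0)\neq 0$, so $Q$ is a unit by axiom~\ref{axiom:unit} and $f=Q^{-1}(x_n^d-R)$ is the desired factorization. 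For \ref{prop:composition} with one adjoined variable $t$ (the general case following by iteration): $F:=t-g(\x)$ lies in $\KK\nl t,\x\nr$ (since $g\in\KK\nl\x\nr\subseteq\KK\nl t,\x\nr$ by axiom~\ref{axiom:intersection_n+m}) and is regular of order $1$ in $t$ as $g(0)=0$; dividing $f$ by $F$ gives $f=FQ+R$ with $R\in\KK\nl\x\nr$, and substituting $t=g(\x)$ yields $f(g(\x),\x)=R(\x)\in\KK\nl\x\nr$.

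With \ref{rk:division} and \ref{prop:composition} in hand, item~\ref{prop:hensel} is routine. The units are exactly the series with nonzero constant term (axiom~\ref{axiom:unit}), so $\KK\nl\x\nr$ is local with maximal ideal $(x_1,\dots,x_n)$; Noetherianity is the classical induction on $n$, since a nonzero ideal contains---after a generic linear change of coordinates, admissible by \ref{prop:composition} because $\KK$ is infinite, and after Weierstrass preparation---a Weierstrass polynomial $P$, and $\KK\nl\x\nr/(P)$ is module-finite over the Noetherian ring $\KK\nl\x'\nr$; the $(x_1,\dots,x_n)$-adic completion is $\KK\lb\x\rb$ because $\KK[\x]$, $\KK\nl\x\nr$ and $\KK\lb\x\rb$ share the same quotients modulo $(\x)^k$ for every $k$ (using \ref{rk:division} iterated); and $\KK\nl\x\nr$ is a regular local domain of dimension $n$---the chain $(x_1)\subset\cdots\subset(x_1,\dots,x_n)$ consists of primes because $\KK\nl\x\nr/(x_1,\dots,x_j)\cong\KK\nl x_{j+1},\dots,x_n\nr$ is a domain---hence a UFD by the Auslander--Buchsbaum theorem, while Henselianity follows from Weierstrass preparation in the standard way (cf.~\cite{DL}). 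Property~\ref{rk:ramif}, the de-ramification lemma, is the one point I would not reprove but cite from \cite{DL} (see also \cite{Ro1}).

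Finally, item~\ref{rk:noethernorm} is the usual Noether normalization argument: induct on $n$, taking $r=n$ if $I=0$; otherwise choose $0\neq f\in I$, make it regular in $x_n$ by a generic linear change, replace it by a Weierstrass polynomial $P\in I$ using \ref{Wprep}, note that $\KK\nl\x\nr/(P)$---hence $A=\KK\nl\x\nr/I$---is module-finite over $\KK\nl\x'\nr$, so that $\KK\nl\x'\nr/(I\cap\KK\nl\x'\nr)\hookrightarrow A$ is an injective finite morphism, and apply the inductive hypothesis, composing finite morphisms. The ``moreover'' then follows from $\dim A=r$ (finite morphisms preserve Krull dimension) together with the identity $\het(I)+\dim(\KK\nl\x\nr/I)=n$, valid because $\KK\nl\x\nr$ is Cohen--Macaulay (being regular local); see \cite{Mat}. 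The only step that does not reduce to a one-line application of Weierstrass division or a citation to \cite{Mat} is the de-ramification property~\ref{rk:ramif}, which I expect to be the main obstacle and which I would take from \cite{DL,Ro1}.
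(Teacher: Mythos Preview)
Your proposal is correct and follows the same line as the paper: everything is reduced to the Weierstrass division axiom, with the de-ramification property \ref{rk:ramif} taken from \cite{DL,Ro1}. The paper's own proof is in fact much terser---it simply cites \cite[Remark~1.3]{DL} for \ref{prop:hensel}, \ref{prop:composition}, \ref{rk:division}, cites \cite[Lemme~5.13]{Ro1} for \ref{rk:ramif}, gives the one-line hint ``divide $x_n^d$ by $f$'' for \ref{Wprep} (exactly your argument), and cites \cite{JP} for \ref{rk:noethernorm}---so your explicit derivations are a faithful unpacking of those references rather than a different route.
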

\begin{proof}
Properties \ref{prop:hensel}, \ref{prop:composition}, \ref{rk:division} are given in \cite[Remark 1.3]{DL}; property \ref{rk:ramif} is given in \cite[Lemme 5.13]{Ro1}. To prove property \ref{Wprep}, it is enough to consider the Weierstrass division of $x_n^d$ by $f(\x)$. Finally, it is classical that property \ref{rk:noethernorm} follows from the Weierstrass division Theorem, see e.g. \cite[3.319]{JP}.
\end{proof}

We add to this list of properties the following two Theorems:

\begin{theorem}[{\cite[Theorem 5.5]{PR}}]\label{A-J} A Weierstrass  family of rings satisfies the Abhyankar-Jung Theorem. More precisely, let $P(\x,Z)\in \KK\nl \x\nr[Z]$ be a monic polynomial in $Z$. Assume that
\[
\Disc_Z(P)=\x^\a u(\x)
\]
where $u(\x)\in \KK\nl \x\nr$ satisfies $u(0)\neq 0$. Then there is  $q\in\N^*$ such that the roots of $P$ belong to $\KK\nl x_1^{1/q},\ldots, x_n^{1/q}\nr$.
\end{theorem}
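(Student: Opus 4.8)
Here is how I would approach this.

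\medskip

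The plan is to reduce the statement to the classical \emph{formal} Abhyankar--Jung theorem over $\KK\lb\x\rb$ (which is the special case $\KK\nl\x\nr=\KK\lb\x\rb$ of the very statement to be proved, and which I take as known since it is classical), together with a purely algebraic descent from $\KK\lb\x\rb$ to $\KK\nl\x\nr$ which I prove by induction on $n$. By the formal theorem there is $q\in\N^*$ such that the roots of $P$ lie in $\KK\lb x_1^{1/q},\dots,x_n^{1/q}\rb$. I then perform the monomial substitution $x_i=t_i^q$, which by $n$ applications of Proposition~\ref{rk:Wtemp}\,\ref{prop:composition} carries $\KK\nl\x\nr$ into $\KK\nl t_1,\dots,t_n\nr$; the polynomial $Q(t_1,\dots,t_n,Z):=P(t_1^q,\dots,t_n^q,Z)$ is then monic in $Z$ over $\KK\nl t_1,\dots,t_n\nr$, splits over the completion $\KK\lb t_1,\dots,t_n\rb$, and still has $\Disc_Z(Q)$ equal to a monomial times a unit. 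It therefore suffices to prove, by induction on $n$ (the case $n=0$ being trivial since $\KK=\overline\KK$), the following descent statement: \emph{a monic polynomial $Q\in\KK\nl\x\nr[Z]$ with $\Disc_Z(Q)$ a monomial times a unit, which splits over $\KK\lb\x\rb$, already splits over $\KK\nl\x\nr$}. Indeed, this exhibits the roots of the original $P$ in the form $\delta(x_1^{1/q},\dots,x_n^{1/q})$ with $\delta\in\KK\nl t_1,\dots,t_n\nr$, that is, in $\KK\nl x_1^{1/q},\dots,x_n^{1/q}\nr$.

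\medskip

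Several reductions apply to the descent statement. Since $\KK\nl\x\nr$ is Henselian with residue field $\KK$ (Proposition~\ref{rk:Wtemp}\,\ref{prop:hensel}), writing $\overline Q(Z):=Q(0,Z)=\prod_k(Z-c_k)^{m_k}$ with distinct $c_k\in\KK$, Hensel's lemma gives $Q=\prod_k Q_k$ with $Q_k$ monic and $\overline{Q_k}=(Z-c_k)^{m_k}$; the cross-resultants $\operatorname{Res}(Q_k,Q_l)$ are units, so each $\Disc_Z(Q_k)$ is again a monomial times a unit, each $Q_k$ splits over $\KK\lb\x\rb$, and $\deg_Z Q_k<\deg_Z Q$ unless there is a single factor. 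In that remaining case, after the translation $Z\mapsto Z+c$ we have $\overline Q=Z^d$; a Tschirnhaus transformation $Z\mapsto Z-\tfrac1d(\text{coefficient of }Z^{d-1})$ (licit because $\cha\KK=0$, using Proposition~\ref{rk:Wtemp}\,\ref{prop:composition}, and harmless for $\Disc_Z$) kills the subleading term; and if $\Disc_Z(Q)$ is a unit then $\overline Q$ is separable, which together with $\overline Q=Z^d$ forces $d=1$ and $Q=Z$. So I may assume $d\ge2$ and, permuting coordinates if necessary (Definition~\ref{W-fam}\,\ref{axiom:permutation}), that $x_1\mid\Disc_Z(Q)$. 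The base case $n=1$ is then the classical Newton--Puiseux theorem for the Henselian discrete valuation ring $\KK\nl x_1\nr$: it follows from the reductions just made together with the resolution of Newton-polygon slopes by substitutions $x_1=s^{q'}$, $Z=s^pW$ and division by $s^{pd}$ (using Proposition~\ref{rk:Wtemp}\,\ref{prop:composition} and \ref{rk:division}), Hensel's lemma, and iteration until the reduction modulo $(s)$ becomes separable.

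\medskip

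For the inductive step with $n\ge2$, set $\x'=(x_2,\dots,x_n)$. Since $\KK\nl\x\nr/(x_1)=\KK\nl\x'\nr$ — which follows from Definition~\ref{W-fam}\,\ref{axiom:intersection_n+m} and Proposition~\ref{rk:Wtemp}\,\ref{rk:division} and \ref{prop:composition} — the reduction $Q_0:=Q(0,x_2,\dots,x_n,Z)\in\KK\nl\x'\nr[Z]$ is monic and splits over $\KK\lb\x'\rb$. Factoring $Q_0=\prod_l R_l^{e_l}$ into distinct monic irreducibles of the UFD $\KK\nl\x'\nr[Z]$ and using $x_1\mid\Disc_Z(Q)$ together with the monomial-times-unit form, one checks that each $R_l$ has discriminant a monomial times a unit and splits over $\KK\lb\x'\rb$; by the inductive hypothesis each $R_l$, hence $Q_0$, splits over $\KK\nl\x'\nr$, so each irreducible factor $R_l$ is in fact \emph{linear}. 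If there are at least two factors, then lifting the factorization $Q=\prod_k Q_k$ along the ideal $(x_1)$ — the $R_l^{e_l}$ being pairwise coprime, so that the equations defining the lift form a smooth system, to which a Hensel-type argument (or the Artin approximation theorem) applies — produces factors $Q_k$ of degree $<d$ that again satisfy the hypotheses, and we conclude by induction on $d$. Otherwise $Q_0=(Z-\eta)^d$ with $\eta\in\KK\nl\x'\nr$; after $Z\mapsto Z+\eta$ all subleading coefficients of $Q$ lie in $(x_1)\KK\nl\x\nr$, and we run the one-variable Newton--Puiseux step of the base case in the single variable $x_1$ (substitute $x_1=s^{q'}$, $Z=s^pW$, divide by $s^{pd}$), which preserves the monomial-times-unit form of the discriminant; at the very end the auxiliary ramification $x_1=s^{q'}$ is undone by Proposition~\ref{rk:Wtemp}\,\ref{rk:ramif}, because the roots, lying in $\KK\lb\x\rb$, are honest power series in $x_1$. \emph{The main obstacle} is precisely to equip this last loop with a complexity measure that forces it to terminate: this is the combinatorial core of Abhyankar--Jung — the control of the characteristic exponents of a quasi-ordinary polynomial — and it is exactly there that the hypothesis ``$\Disc_Z(Q)$ is a monomial times a unit'' is used in full strength. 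A secondary technical point is the lifting of factorizations along $(x_1)$, which is most cleanly handled via the Artin approximation theorem, available in this setting.
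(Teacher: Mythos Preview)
The paper does not prove this theorem; it simply cites \cite{PR}, so there is no in-paper proof to compare against.

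Your overall strategy---use the classical formal Abhyankar--Jung theorem to get roots in $\KK\lb x_1^{1/q},\dots,x_n^{1/q}\rb$, substitute $x_i=t_i^q$, and then descend the resulting splitting from $\KK\lb t_1,\dots,t_n\rb$ to $\KK\nl t_1,\dots,t_n\nr$---is sound. However, the execution of the descent has genuine gaps. First, you explicitly concede that your Newton--Puiseux loop lacks a termination argument (``the main obstacle''), so the proof as written is incomplete. Second, the claim that each irreducible factor $R_l$ of $Q_0:=Q|_{x_1=0}$ has discriminant a monomial times a unit is unjustified: since you assume $x_1\mid\Disc_Z(Q)$, one has $\Disc_Z(Q_0)=\Disc_Z(Q)|_{x_1=0}=0$, and nothing about $\Disc_Z(R_l)$ can be read off from this. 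Third, $\KK\nl\x\nr$ is Henselian with respect to its maximal ideal, not with respect to $(x_1)$, so ``a Hensel-type argument'' for lifting factorizations along $(x_1)$ does not apply directly.

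All of this machinery is in fact unnecessary: the descent step follows in one line from Artin approximation (Theorem~\ref{Artin}), and does not even require the discriminant hypothesis. Indeed, suppose $Q=Z^d+a_1Z^{d-1}+\cdots+a_d\in\KK\nl\x\nr[Z]$ splits as $Q=\prod_{i=1}^d(Z-\hat\xi_i)$ over $\KK\lb\x\rb$. Then $(\hat\xi_1,\dots,\hat\xi_d)$ is a formal solution of the polynomial system
\[
F_k(\x,y_1,\dots,y_d):=e_k(y_1,\dots,y_d)-(-1)^k a_k(\x)=0,\qquad k=1,\dots,d,
\]
with $F_k\in\KK\nl\x\nr[y_1,\dots,y_d]$. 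By Theorem~\ref{Artin} this system has a solution $(\xi_1,\dots,\xi_d)\in\KK\nl\x\nr^d$, and any such solution gives $Q=\prod_i(Z-\xi_i)$ by Vieta's formulas. Combined with the formal theorem and Proposition~\ref{rk:Wtemp}\,\ref{prop:composition},\,\ref{rk:ramif}, this completes the proof with no induction on $n$ and no Newton--Puiseux iteration.
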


The next result motivates the introduction of the notion of W-families in \cite{DL}.

\begin{theorem}[{\cite[Theorem 1.1]{DL}}]\label{Artin}
 A Weierstrass  family of rings satisfies the Artin Approximation Theorem: let $F=(F_1, \ldots, F_p)\in\KK\nl \x\nr[\y]^p$ with $\y=(y_1,\ldots, y_m)$, and let $\wdh g(\x)=(\wdh g_1(\x), \ldots, \wdh g_m(\x))\in\KK\lb\x\rb^m$ be a formal power series solution:
$$F(\x,\wdh g(\x))=0.$$
Let $c\in\N$. Then there is a solution $g^{(c)}(\x)=(g_1^{(c)}(\x),\ldots, g_m^{(c)}(\x))\in\KK\nl\x\nr^m$:
$$F(\x, g^{(c)}(\x))=0$$
with $g_i^{(c)}(\x)-\wdh g_i(\x)\in (\x)^c$ for every $i$.
\end{theorem}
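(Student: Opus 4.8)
The plan is to run Artin's classical inductive argument, using only the W-family axioms, by induction on the number $n$ of variables $x_1,\ldots,x_n$. The base case $n=0$ is immediate: $\KK\nl\,\nr=\KK$ by axiom \ref{axiom:inclusions}, so $\wdh g\in\KK^m$ already lies in the family.

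For the inductive step, the first and easy half is the \emph{smooth case}. Suppose that, after reordering, the matrix $[\partial_{y_j}F_i(\x,\wdh g(\x))]_{i,j\leq p}$ has determinant $\delta(\x)$ with $\delta(0)\neq 0$ (so $p\leq m$). I would then replace the free coordinates $\wdh g_{p+1},\ldots,\wdh g_m$ by polynomial truncations agreeing with them modulo $(\x)^N$ for some $N\gg c$ — these lie in $\KK[\x]\subseteq\KK\nl\x\nr$ — and solve the first $p$ equations for $(y_1,\ldots,y_p)$ by the Henselian implicit function theorem, which applies since $\KK\nl\x\nr$ is Henselian (Proposition \ref{rk:Wtemp}\,\ref{prop:hensel}). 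Comparing the resulting $g^{(c)}\in\KK\nl\x\nr^m$ with $\wdh g$ inside $\KK\lb\x\rb$ and using uniqueness of the Henselian solution together with the Lipschitz estimates built into Newton's iteration, one gets $g^{(c)}\equiv\wdh g\bmod(\x)^c$ once $N$ is chosen large enough.

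The core of the proof is the reduction of the general case to the smooth one. Let $H\subseteq\KK\lb\x,\y\rb$ be the ideal of maximal minors of $[\partial_{y_j}F_i]$ and let $\p\subset\KK\lb\x,\y\rb$ be the kernel of the substitution homomorphism $\y\mapsto\wdh g(\x)$, a prime ideal containing $F_1,\ldots,F_p$. If the image of $H$ in $\KK\lb\x\rb$ under $\y\mapsto\wdh g(\x)$ is nonzero, I would pick a nonzero element $\Delta(\x)$ of it; the formal solution is then generically smooth, and one can eliminate a variable. After a linear change of coordinates making $\Delta$ regular in $x_n$, I would use Weierstrass division and preparation (axiom \ref{axiom:weierstrass} and Proposition \ref{rk:Wtemp}\,\ref{Wprep}), together with the intersection axiom \ref{axiom:intersection_n+m}, to express $\wdh g$ through data depending only on $x_1,\ldots,x_{n-1}$ and thereby recast the problem as an approximation problem over $\KK\nl x_1,\ldots,x_{n-1}\nr$, to which the induction hypothesis applies. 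If instead the image of $H$ is $0$, then $\p$ is a singular prime and I would first perform a desingularization step in the style of N\'eron — replacing $(F)$ by an auxiliary system whose Jacobian ideal along $\wdh g$ is strictly larger — and iterate; termination is ensured by Noetherianity and regularity of $\KK\nl\x\nr$ (Proposition \ref{rk:Wtemp}\,\ref{prop:hensel}).

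I expect the main obstacle to be exactly this variable-elimination step. A priori $\Delta$ is only a formal power series, whereas the Weierstrass machinery of a W-family applies to W-series, so before preparing $\Delta$ one must replace it by — or control it through — W-series data; this is precisely where axioms \ref{axiom:intersection_n+m} and \ref{axiom:weierstrass} are genuinely needed, and where a careful comparison of the truncation order $N$ against the target order $c$ must be carried out. The Abhyankar--Jung Theorem \ref{A-J} plays no role in this argument; it is recorded only as an independent structural property of W-families.
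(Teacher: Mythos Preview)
The paper does not prove Theorem \ref{Artin} at all: it is stated with the citation \cite[Theorem 1.1]{DL} and used as a black box. There is therefore nothing in the paper to compare your attempt against.

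As for the sketch itself: the overall architecture --- induction on $n$, smooth case via Henselianity, and a height-reduction when the Jacobian ideal vanishes along $\wdh g$ --- is the correct shape of the Denef--Lipshitz argument (which is itself Artin's argument transported to W-families). Two remarks. First, calling the singular-reduction step ``N\'eron-style desingularization'' is misleading: N\'eron--Popescu desingularization is a far deeper and different machine, and invoking it here would be circular or at least vastly overkill. What actually happens is Artin's elementary trick: when all $p\times p$ minors of the Jacobian vanish along $\wdh g$, adjoin those minors to the system $F$; the new system has the same formal solution but its defining prime in $\KK\lb\x,\y\rb$ has strictly larger height, and one inducts on that height. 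Second, the obstacle you flag --- that $\Delta$ is a priori only formal --- is real, and your paragraph does not resolve it. In Denef--Lipshitz this is handled by a careful bookkeeping: one works throughout with equations whose coefficients lie in $\KK\nl\x\nr$, and the Weierstrass division (axiom \ref{axiom:weierstrass}) is applied only to W-series, while the formal solution is tracked separately and controlled modulo powers of $(\x)$. Your sentence ``express $\wdh g$ through data depending only on $x_1,\ldots,x_{n-1}$'' hides exactly the work that makes the induction go through, and as written it is a genuine gap rather than a routine verification.
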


In what follows, we will use the following well known corollaries of the above result (and we provide their proofs for the sake of completeness).

\begin{corollary}\label{cor:Artin2}
Let $\phi: \KK \nl \x \nr \lgw\KK \nl \ub \nr$ be a morphism of Weierstrass power series. Then $\Ker(\phi)\KK\lb\x\rb$ is a prime ideal.
\end{corollary}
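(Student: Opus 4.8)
The plan is to exploit the fact that $\KK\nl \x\nr$ is Noetherian (Proposition \ref{rk:Wtemp}\,\ref{prop:hensel}), so that its $(\x)$-adic completion $\KK\lb \x\rb$ is a \emph{flat} local $\KK\nl\x\nr$-algebra, and to reduce the primality of the extended ideal to Artin approximation applied to a witnessing system of polynomial equations. Write $\p := \Ker(\phi)$, a prime ideal of $A:=\KK\nl\x\nr$, and let $\wdh A := \KK\lb\x\rb$ be the completion. Since $A$ is Noetherian, $\wdh A$ is faithfully flat over $A$, and since $A$ is also Henselian and excellent-like in the relevant sense (it satisfies Artin approximation by Theorem \ref{Artin}), the strategy is: show that $\p\wdh A$ is radical, and then show it is prime by a direct argument on witnesses.

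First I would reduce to showing that $\p \wdh A = \Ker(\wdh\phi)$. The inclusion $\p\wdh A \subseteq \Ker(\wdh\phi)$ is immediate since $\wdh\phi$ extends $\phi$ and $\wdh\phi$ is continuous for the $(\x)$-adic topology, so it kills the closure of $\p$, which (as recalled in Remark \ref{rk:TemperateDualArtin}, using that $A$ is Noetherian) is exactly $\p\wdh A$. For the reverse inclusion, take $\wdh F\in\KK\lb\x\rb$ with $\wdh F(\phi(\ub))=0$. One wants to approximate $\wdh F$ by elements of $\p$; this is the place where Artin approximation enters, and it is essentially the content of Remark \ref{rk:TemperateDualArtin} run in the direction we need. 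Concretely: fix $c\in\N$; I would truncate $\wdh F = F_{<c} + R$ with $R\in(\x)^c$, and seek $G\in\KK\nl\x\nr$ with $G(\phi(\ub))=0$ and $G - F_{<c}\in(\x)^c$, i.e. $G\equiv \wdh F \bmod (\x)^c$. Write $\phi_i(\ub)=\sum a_\a^{(i)}\ub^\a$; the condition ``$G(\phi(\ub))=0$'' together with ``$G$ has prescribed low-order jet'' can be encoded as a system of polynomial equations $F(\x,\y)=0$ in auxiliary unknowns $\y$ (the coefficients of $G$ up to the needed order, plus the coefficients of a power series $H(\ub)$ expressing the vanishing — but care is needed since $\phi_i$ are themselves in $\KK\nl\ub\nr$, so one stays inside the Weierstrass family). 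By Theorem \ref{Artin} a formal solution (namely $\wdh F$ itself, together with $H=0$) can be approximated by a convergent one $G$, giving $G\in\p$ with $G\equiv\wdh F\bmod(\x)^c$. Letting $c\to\infty$ shows $\wdh F$ lies in the $(\x)$-adic closure of $\p$, which is $\p\wdh A$. Hence $\p\wdh A = \Ker(\wdh\phi)$.

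It remains to observe that $\Ker(\wdh\phi)$ is prime: it is the kernel of a ring homomorphism $\wdh\phi:\KK\lb\x\rb\to\KK\lb\ub\rb$ into an integral domain, so it is automatically a prime ideal. Combining, $\p\wdh A=\Ker(\wdh\phi)$ is prime, which is the assertion. I expect the main obstacle to be the careful setup of the polynomial (or polynomial-in-Weierstrass-coefficients) system in the Artin approximation step — in particular making sure that ``$G(\phi(\ub))=0$'' can be written as the vanishing of finitely many equations with coefficients in the family, so that Theorem \ref{Artin} genuinely applies; this is the standard but slightly delicate translation between ``ideal membership/closure'' and ``solvability of polynomial equations'', and it is exactly the mechanism sketched at the end of Remark \ref{rk:TemperateDualArtin}. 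Everything else (flatness/faithful flatness of completion, continuity of $\wdh\phi$, primality of a kernel into a domain) is formal.
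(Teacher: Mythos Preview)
Your reduction to the equality $\Ker(\phi)\KK\lb\x\rb = \Ker(\wdh\phi)$ is a genuine error: that equality is \emph{false} in general. Indeed, the equality $\Ker(\phi)\KK\lb\x\rb=\Ker(\wdh\phi)$ is equivalent to $\Fr(\phi)=\Wr(\phi)$ (see Remark~\ref{rk:TemperateDualArtin}), which is precisely the nontrivial conclusion of the main Theorem~\ref{thm:TemperateRank} under the extra hypothesis $\Gr(\phi)=\Fr(\phi)$. Without that hypothesis there are classical examples (Osgood, Gabrielov) of analytic morphisms $\phi$ with $\Ker(\phi)=(0)$ but $\Ker(\wdh\phi)\neq(0)$; in such examples $\Ker(\phi)\KK\lb\x\rb=(0)$ is certainly prime, yet strictly smaller than $\Ker(\wdh\phi)$. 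So you are attempting to prove a statement that is strictly stronger than the corollary and simply not true.

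The place where your argument breaks down is exactly where you anticipated trouble: the ``system'' encoding $G(\phi(\ub))=0$ is not a finite system of polynomial equations $F(\x,\y)=0$ with $F\in\KK\nl\x\nr[\y]$ in the sense required by Theorem~\ref{Artin}. The unknown $G$ is a power series (infinitely many coefficients), and the vanishing of $G\circ\phi$ imposes infinitely many conditions in the $\ub$-variables; no honest truncation yields an Artin-type system whose solutions are elements of $\Ker(\phi)$. Your appeal to Remark~\ref{rk:TemperateDualArtin} is also a misreading: that remark deduces the approximation property \emph{from} the rank theorem, not the other way around.

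By contrast, the paper's proof stays with the weaker (and true) statement. One takes $\wdh f,\wdh g$ with $\wdh f\wdh g\in\Ker(\phi)\KK\lb\x\rb$, writes $\wdh f\wdh g=\sum_{i=1}^s f_i\wdh h_i$ with $f_1,\dots,f_s\in\Ker(\phi)$ fixed, and applies Theorem~\ref{Artin} to the single polynomial equation $y_{s+1}y_{s+2}-\sum_{i=1}^s f_i y_i=0$ in the finitely many unknowns $y_1,\dots,y_{s+2}$. The approximating solutions $f^{(c)},g^{(c)}\in\KK\nl\x\nr$ then satisfy $f^{(c)}g^{(c)}\in\Ker(\phi)$, and primality of $\Ker(\phi)$ in $\KK\nl\x\nr$ forces one factor into $\Ker(\phi)$; passing to the closure gives the result. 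The key difference is that the paper never tries to approximate an arbitrary element of $\Ker(\wdh\phi)$, only elements already known to lie in $\Ker(\phi)\KK\lb\x\rb$.
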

\begin{proof}
The following is a well-known argument. Let $\wdh f$, $\wdh g\in\KK\lb \x\rb$ be such that $\wdh f\wdh g\in  \Ker(\phi)\KK\lb\x\rb$. That is, there exist $f_1$, \ldots, $f_s\in\Ker(\phi)$ and $\wdh h_1$, \ldots, $\wdh h_s\in\KK\lb \x\rb$ such that
$$\wdh f\wdh g-\sum_{i=1}^s f_i\wdh h_i=0.$$
By Artin approximation Theorem applied to $y_{s+1}y_{s+2}-\sum_{i=1}^sf_i y_i$, for every $c\in\N^*$, there exist $f^{(c)}$, $g^{(c)}$, $h^{(c)}_1$, \ldots, $h^{(c)}_s\in\KK\nl \x\nr$ such that
$$f^{(c)}g^{(c)}-\sum_{i=1}^s f_i h_i^{(c)}=0$$
and  $\wdh f-f^{(c)}$, $\wdh g- g^{(c)}\in (\x)^c$. Since $\Ker(\phi)$ is a prime ideal, then $f^{(c)}$ or $g^{(c)}$ is in $\Ker(\phi)$. Apart from replacing $f$ by $g$, we may assume that $f^{(c)}\in \Ker(\phi)$ for infinitely many $c$. Therefore, $f$ is the limit of elements of $\Ker(\phi)$, that is, $f$ belongs to the closure of $\Ker(\phi)$ in $\KK\lb\x\rb$ for the $(\x)$-topology. But, by \cite[Theorem 8.11]{Mat},  this closure is exactly $\Ker(\phi)\KK\lb \x\rb$, so $f\in\Ker(\phi)\KK\lb \x\rb$. This proves that $\Ker(\phi)\KK\lb\x\rb$ is a prime ideal.
\end{proof}

\begin{corollary}\label{cor:Artin}
Let $(\KK\nl x_1,\ldots, xn\nr)_n$ be a Weierstrass family.
Suppose that $P$ and $Q$ are monic polynomials in $y$, $P\in\KK\lb\x\rb[y]$ and $Q\in\KK\nl\x\nr[y]$, that are not coprime in $\KK\lb\x,y\rb$. Then $P$ and $Q$ admits a common W-factor $R\in\KK\nl\x\nr[y]$.
\end{corollary}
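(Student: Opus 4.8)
The plan is to reduce to a statement about the greatest common divisor of $P$ and $Q$ inside $\KK\lb \x, y\rb$ and then use Artin approximation (Theorem \ref{Artin}) to pull the formal gcd down to a Weierstrass factor. Since $P$ and $Q$ are monic in $y$ and not coprime in $\KK\lb \x\rb[y]$, and since $\KK\lb \x\rb$ is a UFD (completion of a UFD, or directly), the gcd $\wdh R := \gcd(P, Q)$ in $\KK\lb \x\rb[y]$ is a nonconstant monic polynomial in $y$ of some degree $d \geq 1$, with $d \leq \deg_y Q =: e$. We have $\wdh R \mid Q$ in $\KK\lb \x\rb[y]$, say $Q = \wdh R \cdot \wdh S$ with $\wdh S \in \KK\lb \x\rb[y]$ monic of degree $e - d$.

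First I would set up a system of polynomial equations over $\KK\nl \x\nr$ whose unknowns are the coefficients of a degree-$d$ monic polynomial $R$ and a degree-$(e-d)$ monic polynomial $S$, namely the equations expressing $R \cdot S = Q$ coefficient-by-coefficient in $y$. This is a system $F(\x, \y) = 0$ with $F \in \KK\nl \x\nr[\y]$ (the coefficients of $Q$ live in $\KK\nl \x\nr$, and the equations are polynomial in the unknown coefficients), and $(\wdh R, \wdh S)$ gives a formal solution. By Artin approximation (Theorem \ref{Artin}), for any $c$ there is a Weierstrass solution $(R^{(c)}, S^{(c)}) \in \KK\nl \x\nr[y]^2$ with coefficients agreeing with those of $(\wdh R, \wdh S)$ modulo $(\x)^c$, giving a genuine factorization $Q = R^{(c)} S^{(c)}$ in $\KK\nl \x\nr[y]$ with $R^{(c)}$ monic of degree $d$. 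The point is then to argue that, for $c$ large enough, $R^{(c)}$ still divides $P$ in $\KK\lb \x\rb[y]$ — for then $R^{(c)}$ is the desired common W-factor $R$ (it divides $Q$ in $\KK\nl\x\nr[y]$ by construction, and divides $P$ in $\KK\lb\x\rb[y]$, which is enough since "$R$ divides $P$ in $\KK\lb\x,y\rb$" is the relevant notion of common factor here; alternatively one can fold the divisibility $P = R \cdot T$ into the Artin system as well, adding the coefficients of a formal $\wdh T$ with $P = \wdh R \wdh T$ as further unknowns, so that the approximated solution automatically satisfies $P = R^{(c)} T^{(c)}$ in $\KK\nl\x\nr[y]$ — this is cleaner and I would do it this way).

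So the actual system I would feed to Artin approximation has unknowns $= $ the coefficients of monic $R$ (degree $d$), of $S$ (degree $\deg_y Q - d$, not necessarily monic a priori but it will be), and of $T$ (degree $\deg_y P - d$), with equations $RS = Q$ and $RT = P$; the formal solution is $(\wdh R, \wdh S, \wdh T)$. The output $(R^{(c)}, S^{(c)}, T^{(c)}) \in \KK\nl \x\nr[y]^3$ then satisfies $R^{(c)} S^{(c)} = Q$ and $R^{(c)} T^{(c)} = P$ in $\KK\nl \x\nr[y]$, and $R := R^{(c)}$ is a monic common W-factor of $P$ and $Q$. The main obstacle is a minor bookkeeping point: one must check that $\wdh R$, being the gcd in $\KK\lb\x\rb[y]$, indeed has its leading coefficient a unit (so that after normalizing it is monic with coefficients in $\KK\lb\x\rb$, not in the fraction field), so that the whole divisibility picture lives in the polynomial ring rather than requiring denominators; this uses that $P$ is monic, hence $\wdh R \mid P$ forces the leading coefficient of $\wdh R$ to divide $1$. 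With this in hand, no further appeal to the W-temperate axioms is needed — only the Weierstrass-family structure and Artin approximation.
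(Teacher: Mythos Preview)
Your ``cleaner'' approach of including both equations $RS=Q$ and $RT=P$ in the Artin system has a genuine problem: Artin approximation (Theorem~\ref{Artin}) requires the system $F$ to lie in $\KK\nl\x\nr[\y]^p$, but the equations encoding $RT=P$ have as constant terms the coefficients of $P$, which by hypothesis lie only in $\KK\lb\x\rb$, not in $\KK\nl\x\nr$. So you cannot feed $RT=P$ to Artin approximation, and the resulting $R^{(c)}$ need not divide $P$ at all.

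Your first approach --- apply Artin only to $RS=Q$ and then argue separately that $R^{(c)}$ divides $P$ --- is the right one, and is exactly what the paper does, but you left the key step as a hand-wave. The missing argument is a finiteness one: since $\KK\nl\x\nr$ is a UFD (Proposition~\ref{rk:Wtemp}\,\ref{prop:hensel}), $Q$ has only finitely many monic degree-$d$ factors $R_1,\ldots,R_s$ in $\KK\nl\x\nr[y]$. Choose $c$ large enough that $R_i-R_j\notin(\x)^c$ for $i\neq j$; then the approximated factor $R^{(c)}$, being one of the $R_i$ and congruent to $\wdh R$ modulo $(\x)^c$, must actually equal $\wdh R$. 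Hence $\wdh R\in\KK\nl\x\nr[y]$, and since $\wdh R$ divides $P$ in $\KK\lb\x\rb[y]$ by construction, you are done. Without this finiteness step there is no reason a nearby monic factor $R^{(c)}$ of $Q$ should have anything to do with $P$.
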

\begin{proof}
By hypothesis, there is a non unit $R\in\KK\lb\x,y\rb$ that divides $P$ and $Q$ in $\KK\lb\x,y\rb$. Since $P$ is monic in $y$, then $R(0,y)\neq 0$, so $R$ equals a unit times a monic polynomial by Weierstrass preparation for formal power series. By replacing $R$ by this monic polynomial we may assume that $R$ is monic in $y$. So we have $RS=Q$ where $S\in\KK\lb\x\rb[y]$ is monic in $y$. We write 
\[
R=\sum_{i=0}^dr_i(\x)y^i, \quad S=\sum_{i=0}^es_i(\x)y^i \quad \text{ and } Q=\sum_{i=0}^{d+e}q_i(\x)y^i.
\]
The equality $RS=Q$ is equivalent to the system of equations
\[
\sum_{k=\max\{0,\ell-e\}}^{\min\{\ell,d\}} r_k(\x)s_{\ell-k}(\x)-q_\ell(\x)=0 \text{ for } \ell=0,\ldots, d+e. 
\]
By Artin approximation Theorem \ref{Artin}, for any $c\in\N$, this system of equations has a solution $(r'_i(\x),s'_j(\x))\in\KK\nl\x\nr^{d+e+2}$ and that coincide with $(r_i(\x),s_i(\x))$ up to $(\x)^c$. We set $R'(\x,y)=\sum_{i=0}^d r_i'(\x)y^i$. Since $\KK\nl\x\nr$ is a UFD by Proposition \ref{rk:Wtemp}\,\ref{prop:hensel}, $Q$ has finitely many  monic factors of degree $d$ in $y$ that we denote by $R_1$, \ldots, $R_s$. Let us choose $c\in\N$ large enough to insure that $R_i-R_j\notin (\x)^{c}$ when $i\neq j$. Since $R'$ equals one of the $R_i$,   necessarily $R'=R$. This proves that $R\in\KK\nl\x\nr[y]$, so $P$ and $Q$ admits a common W-factor $R\in\KK\nl\x\nr[y]$.
\end{proof}


\section{Proof of W-temperate rank Theorem}\label{sec:Proof}
\subsection{Reduction of Theorem \ref{thm:TemperateRank} to Theorem \ref{thm:red1}}\label{red:sec}
We start by proving by contradiction that the following result implies Theorem \ref{thm:TemperateRank}:

\begin{theorem}\label{thm:red1}
Let $\phi:\KK\nl \x, y\nr\lgw \KK\nl \ub \nr$ be a morphism of rings of W-temperate power series  such that
\begin{enumerate}
\item[i)] The kernel of $\phi$ is generated by one Weierstrass polynomial $P\in\KK\lb \x\rb[y]$.
\item[ii)] $\Gr(\phi)=\Fr(\phi)=n$.
\end{enumerate}
Then $P\in\OOx[y]$.
\end{theorem}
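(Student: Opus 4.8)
We are given a morphism $\phi : \KK\nl \x, y\nr \to \KK\nl \ub\nr$ whose kernel is generated by a single Weierstrass polynomial $P \in \KK\lb\x\rb[y]$ (a priori only with formal coefficients), and with $\Gr(\phi) = \Fr(\phi) = n$. We want to show the coefficients of $P$ are actually W-convergent, i.e. $P \in \OOx[y]$.

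Let me think about the structure. This is clearly the "one-variable-in-the-fiber" core case of Gabrielov's rank theorem. The assumption $\Fr(\phi) = n$ means $\Ker(\wdh\phi)$ has height $1$ in $\KK\lb\x,y\rb$ (dimension $n$), consistent with it being generated by one polynomial $P$. And $\Ker(\phi)$ has... well, $\Wr(\phi) \geq \Fr(\phi) = n$, and we want $\Wr(\phi) = n$ too (that's what $P \in \OOx[y]$ would give, roughly).

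**The plan.** I recall the strategy from the authors' previous paper \cite{BCR}. The key is to analyze $P$ via its behavior along generic curves / after generic linear changes of coordinates, reduce to low dimension using the axioms, and then use temperateness. Let me reconstruct:

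First, I would reduce to the case $n \leq $ small. The generic rank being $n$ means there is an $n \times n$ minor of the Jacobian that is nonzero; after permuting, $\phi$ restricted to $\KK\nl x_1, \dots, x_n\nr$ is "generically submersive" in the appropriate sense. Actually since $\Gr(\phi) = n$ and there are $n+1$ source variables, the image of $\wdh\phi$ has dimension $n$, matching $\Fr = n$.

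The heart of the argument: take $\phi = (\phi_1(\ub), \dots, \phi_n(\ub), \psi(\ub))$ where the first $n$ components give the $\x$-variables and $\psi$ the $y$-variable. The condition is $P(\phi_1, \dots, \phi_n, \psi) = 0$ in $\KK\nl\ub\nr$. So $\psi$ is a root of the formal polynomial $P(\phi(\x'), y)$, whose coefficients lie in... hmm, $P$'s coefficients are in $\KK\lb\x\rb$, so $P(\phi_1, \dots, \phi_n, y) \in \KK\lb \phi_1,\dots,\phi_n\rb[y] \subseteq \KK\lb\ub\rb[y]$ — wait, we don't immediately know it's convergent. The point will be to recover convergence of the coefficients $a_i(\x)$ of $P = y^d + a_1(\x)y^{d-1} + \cdots + a_d(\x)$ one coefficient at a time, or all together, using:

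(a) $\psi \in \KK\nl\ub\nr$ is W-convergent and is a root; the other roots $\psi_2, \dots, \psi_d$ (in some ramified extension) are conjugate to $\psi$; the coefficients $a_i$ are $\pm$ elementary symmetric functions of the roots.

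(b) After a generic linear change in $\x$, and using axiom \ref{axiom:Abhyankar-Moh} (closure by generic hyperplane sections) to reduce the number of $\x$-variables — this is exactly where the excerpt says axiom \ref{axiom:Abhyankar-Moh} is used, in $\S\S$\ref{ssec:RedLowDim} — one reduces to $n = 2$ or even $n=1$.

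(c) In the low-dimensional case, one uses the Abhyankar–Jung theorem (Theorem \ref{A-J}) to parametrize the roots of $P$ by Puiseux-type series in $\KK\nl t^{1/q}\nr$, writes everything in terms of an algebraic (Galois) situation over $\KK\nl t\nr$, and then invokes the temperateness axiom \ref{axiom:last} — which the excerpt flags as used in the proof of Theorem \ref{local_to_semiglobal} — to transfer W-convergence from one root (the known convergent $\psi$, suitably substituted) to its conjugates, hence to the symmetric functions $a_i$.

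**Key steps in order.**
\begin{enumerate}
\item Normalize: arrange via linear changes that $P$ is Weierstrass of degree $d$ in $y$, that $\phi = (\phi_1, \dots, \phi_n, \psi)$ with $\Gr$ of the first $n$ components equal to $n$, and that $\wdh\phi$ has kernel exactly $(P)$ with $P$ prime.
\item Reduce the dimension $n$: using closure by generic hyperplane sections (axiom \ref{axiom:Abhyankar-Moh}), show that if $P \notin \OOx[y]$ then there is a generic $\la$ with $P(\x', \la x_1, y) \notin \KK\nl x'\nr[y]$ while the morphism restricted appropriately still satisfies the hypotheses. Iterate to get down to $n = 1$ (source $\KK\nl x_1, y\nr$), or to whatever minimal dimension the argument needs (likely $n=1$, with $\x = x_1 =: t$).
\item In dimension one: $P \in \KK\lb t\rb[y]$ Weierstrass, $\Disc_y(P) = t^a u(t)$ (since $P$ irreducible over $\KK\lb t\rb$... or handle the general case via factorization), apply Abhyankar–Jung: roots of $P$ lie in $\KK\nl t^{1/q}\rb$ — no wait, a priori only in $\KK\lb t^{1/q}\rb$. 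The convergent root $\psi$ substituted into the curve gives one convergent branch.
\item Set up the Galois/algebraic structure: the roots are $\g_j(t^{1/q})$ for conjugate $\g_j$; the known convergence of $\psi(\ub)$ with $\phi_1(\ub) = $ (something with $t$) gives convergence of one $\g_j$; temperateness (axiom \ref{axiom:last}, with the polynomial $P(\x, z)$ there being built from the $\phi$-data and $\g = \g_j$, $\g' = \g_{j'}$) propagates convergence to all conjugates.
\item Conclude: the $a_i(t)$, being symmetric functions of the now-convergent roots, lie in $\KK\nl t\nr$; lift back up through the hyperplane-section reduction (using closure by local blowings-down, axiom \ref{axiom:monomialcomposition}, and ramification-closure, Prop.~\ref{rk:Wtemp}\ref{rk:ramif}) to deduce $P \in \OOx[y]$.
\end{enumerate}

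**Main obstacle.** The genuinely hard part is Step 4: arranging the algebraic data so that the temperateness axiom \ref{axiom:last} applies — i.e. exhibiting the coefficients of $P$ (or of the relevant auxiliary series) in the precise form $\sum_k x_1^k p_k(x_2, z)$ with the degree bound $\deg_{x_2} p_k \leq \alpha k$, with $\g, \g'$ a pair of conjugate roots of a suitable $\G(t,z)$, and verifying that convergence of the substitution at $\g$ genuinely follows from $\psi \in \KK\nl\ub\nr$. This bookkeeping — controlling how the degrees grow after the blowings-down and hyperplane sections, and identifying the right Galois-theoretic datum — is where the real content lies; the dimension reduction (Steps 1–2) and the symmetric-function conclusion (Step 5) are comparatively formal, modulo the W-family properties already established (Weierstrass preparation, Abhyankar–Jung, Artin approximation, closure axioms). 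I also expect a subtlety in ensuring primality/irreducibility is preserved under the reductions, handled via Corollary \ref{cor:Artin} (common W-factors) and Corollary \ref{cor:Artin2}.
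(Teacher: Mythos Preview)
Your outline has the right high-level shape (dimension reduction via generic hyperplane sections, then a Galois/temperateness argument in low dimension), but there are two genuine gaps that make the plan, as written, unworkable.

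\textbf{First gap: the reduction stops at $n=2$, not $n=1$.} The hyperplane-section step requires that after restricting $P$ to $x_n = \la x_1$, the polynomial $P(\x',\la x_1,y)$ remain \emph{irreducible} (otherwise it no longer generates the kernel of the restricted morphism). This is handled by a Bertini-type theorem, which requires at least three $\x$-variables to apply; it fails when going from $n=2$ to $n=1$ (see \cite[Remark 3.6(3)]{BCR}). The paper therefore reduces only to the case $n=2$, and moreover normalizes the morphism to the specific shape $\phi(x_1)=u_1$, $\phi(x_2)=u_1u_2$ via a sequence of manipulations (``Gabrielov's trick''). Your one-variable Puiseux plan in Steps 3--5 never gets off the ground because you cannot reach $n=1$ while preserving the hypotheses. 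Note also that the temperateness axiom \ref{axiom:last} is stated for $\x=(x_1,x_2)$: it is intrinsically a two-variable statement, which already signals that the endgame lives in dimension $2$.

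\textbf{Second gap: the two-dimensional case is not a direct Abhyankar--Jung computation.} In dimension $2$ the discriminant $\Delta_y(P)$ is a formal curve, not automatically monomial, so Abhyankar--Jung does not apply at the origin. The paper resolves $\Delta_y(P)$ by a sequence of point blowings-up $\s:(N_r,F_r)\to(N_0,0)$, and the real work is tracking the factorization of $P$ through these blowings-up. This is done via the Newton--Puiseux--Eisenstein factorization in the projective rings $\PP_h\lb\x\rb$ (Corollary~\ref{cr:NewtonPuiseux}): the irreducible factors $Q_i$ of $P$ live in $\PP_h\lb\x\rb[y]$, and one must show they extend, and extend \emph{temperately}, along the exceptional divisor $F_r^{(1)}$ (Theorems~\ref{thm:extension} and~\ref{local_to_semiglobal}). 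The temperateness axiom enters precisely here, to propagate W-convergence among the conjugate roots $\ovl\g_i$ of an auxiliary polynomial $\ovl\G(v_2,z)$ arising from the homogeneous element $\g$ in the factorization --- not directly among the roots of $P$ itself. Finally, an induction on the length of the blowing-up sequence (Proposition~\ref{prop:inductivescheme}) together with Lemma~\ref{lem:inductive} (characterizing elements of $\PP_h\lb\x\rb\setminus\KK\lb\x\rb$ under blow-up) pushes the temperate factor back down to the origin. None of this machinery --- the projective rings $\PP_h$, the extension theorems along $F_r^{(1)}$, the inductive descent --- appears in your sketch; the ``symmetric functions of convergent roots'' idea in your Step~5 is far too coarse to close the argument.
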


\begin{remark}\label{rk:red1}
In fact we prove here that the statement of Theorem \ref{thm:red1} implies the statement of Theorem \ref{thm:TemperateRank} when $(\KK\nl x_1,\ldots, x_n\nr)_n)$ is a Weierstrass family (and not only a W-temperate family). Indeed, we will need this reduction in the case of the family of convergent power series over a (not necessarily algebraically closed) complete field of characteristic zero, which is a Weierstrass family but not a W-temperate family.
\end{remark}

\begin{proof}[Reduction of Theorem \ref{thm:TemperateRank} to Theorem \ref{thm:red1}] 
We follow closely \cite[page 1347]{BCR}. Assume that Theorem \ref{thm:TemperateRank} does not hold, that is, there exists a morphism of rings of Weierstrass power series $\phi: \KK\nl\x \nr \lgw \KK\nl\ub\nr$, where $\x = (x_1,\ldots,x_n)$ and $\ub=(u_1,  \ldots,u_m)$, such that $\Gr(\phi)=\Fr(\phi) \geq 1$, but $\Fr(\phi)<\Wr(\phi)$. Consider the induced injective morphism $\fract{\KK\nl\x \nr}{\Ker(\phi)} \lgw \KK\nl\ub\nr$ and, by the Noether normalization given in Proposition \ref{rk:Wtemp} \ref{rk:noethernorm}, there exists a finite injective morphism $\t:\KK\ \nl \tilde{\x} \nr\lgw \fract{\KK\nl\x \nr}{\Ker(\phi)}$. By Proposition \ref{prop:inv_ranks}, we can replace $\phi$ by $\phi\circ\tau$, that is, we may assume that $\phi$ is injective.

Next, since $\Fr(\phi)<\Wr(\phi)=m$, we know that $\Ker(\wdh{\phi})\neq (0)$. Now, suppose that $\Ker(\wdh{\phi})$ is not principal or, equivalently, that its height is at least $2$. By the normalization theorem for formal power series, after a linear change of coordinates, the canonical morphism 
\[
\pi:\KK\lb x_1,  \ldots, x_{\Gr(\phi)}\rb\lgw \frac{\KK\lb \x\rb}{\Ker(\wdh\phi)}
\] 
is finite and injective. Thus,  the ideal $\mathfrak p:=\Ker(\wdh\phi)\cap\KK\lb x_1,  \ldots, x_{\Gr(\phi)+1}\rb$ is a nonzero height one prime ideal. Because $\KK\lb x_1,  \ldots, x_{\Gr(\phi)+1}\rb$ is a unique factorization domain,  $\mathfrak p$ is a principal ideal (see \cite[Theorem 20.1]{Mat} for example). After a linear change of coordinates, we may assume that $\mathfrak p$ is generated by a Weierstrass polynomial $P\in\KK\lb x_1,\ldots, x_{\Gr(\phi)}\rb[x_{\Gr(\phi)+1}]$.

Now, denote by $\phi'$ the restriction of $\phi$ to $\KK\nl x_1,  \ldots, x_{\Gr(\phi)+1}\nr$. By definition $P$ is a generator of $\Ker(\wdh\phi')$, thus $\Fr(\phi')=\Gr(\phi)+1-1=\Gr(\phi)=\Fr(\phi)$. Since $\phi$ is injective, $\phi'$ is injective and $P$ does not belong to $\KK\nl x_1,\ldots, x_{\Gr(\phi)}\nr[x_{\Gr(\phi)+1}]$. Moreover, since $\pi$ is finite, we can use again  Proposition \ref{prop:inv_ranks}, to see that
$$
\Gr(\phi')=\Gr(\wdh{\phi'})=\Gr(\wdh{\phi})=\Gr(\phi).
$$
 Therefore we have $\Gr(\phi')=\Fr(\phi')=m-1$, contradicting Theorem \ref{thm:red1}. 
 \end{proof}
\subsection{Reduction to the low-dimensional case}\label{ssec:RedLowDim}
We now prove by contradiction that the following result implies Theorem \ref{thm:red1}:
\begin{theorem}\label{thm:red2}
Let $\phi:\KK\nl x_1, x_2, y\nr\lgw \KK\nl u_1,u_2\nr$ be a morphism of rings of W-temperate power series such that
\begin{enumerate}
\item[i)] $\phi(x_1)=u_1$ and $\phi(x_2)=u_1u_2$,
\item[ii)]
$\Ker(\wdh\phi)$ is generated by one Weierstrass polynomial $P\in\KK\lb \x\rb[y]$.
\end{enumerate}
Then  $P\in\OOx[y]$.
\end{theorem}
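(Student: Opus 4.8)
The plan is to exploit the special form of the morphism $\phi$, namely $\phi(x_1)=u_1$, $\phi(x_2)=u_1u_2$, which says that $\phi$ is the composition of a chart of the blowing-up of the origin in $\KK\nl u_1,u_2\nr$ with the inclusion of $\KK\nl x_1,x_2\nr$ as a Weierstrass subring. Since $\Ker(\wdh\phi)$ is principal, generated by a Weierstrass polynomial $P(\x,y)\in\KK\lb\x\rb[y]$, the formal rank is $\Fr(\phi)=2$, and $\Gr(\phi)=2$ automatically holds because $\phi$ restricted to $\KK\nl x_1,x_2\nr$ is already of maximal generic rank (the Jacobian $\partial(u_1,u_1u_2)/\partial(u_1,u_2)$ has determinant $u_1\ne 0$). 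So we are in the situation of Theorem \ref{thm:red1}, and the content to extract is: the formal factor $P$ is in fact a \emph{convergent} (i.e.\ W-temperate) Weierstrass polynomial, $P\in\OOx[y]$.

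First I would set $\psi(\x,y):=\phi(y)\in\KK\nl u_1,u_2\nr$, so that $P(\x,\psi)=0$ formally, and analyze $P$ through its roots. Write $P=y^d + a_1(\x)y^{d-1}+\cdots+a_d(\x)$ with $a_i\in\KK\lb\x\rb$; the goal is to show each $a_i\in\OOx$. The discriminant $\Disc_y(P)\in\KK\lb\x\rb$ is nonzero since $P$ is irreducible (hence separable in characteristic zero); after a generic linear change in $(x_1,x_2)$ and using Weierstrass preparation, one arranges $\Disc_y(P)=x_1^{a_1}x_2^{a_2}\cdot u(\x)$ with $u(0)\ne 0$ — here one must be slightly careful because this is only known formally, but Abhyankar–Jung (Theorem \ref{A-J}) for \emph{formal} power series applies in $\KK\lb\x\rb$, giving $q\in\N^*$ such that all roots of $P$ lie in $\KK\lb x_1^{1/q},x_2^{1/q}\rb$. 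Substituting $x_i\mapsto x_i^q$ reduces us to the case where $P$ splits completely: $P(\x,y)=\prod_{j}(y-\g_j(\x))$ with $\g_j\in\KK\lb\x\rb$, one of which — say $\g_1$ — satisfies $\g_1(x_1,x_1x_2)=\psi'$ for an appropriate convergent series (tracking the blowing-up substitution $x_2\mapsto x_1x_2$ on the monomial side). The key leverage is then axiom \ref{axiom:monomialcomposition}: a formal series whose pullback under a local blowing-down is W-temperate is itself W-temperate.

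The heart of the argument, and the main obstacle, is to propagate W-temperateness from the one root $\g_1$ (which we get for free, being essentially built out of $\phi$ and the structure of the blowing-up chart) to \emph{all} the conjugate roots $\g_j$ — for only then do the coefficients $a_i$, being the elementary symmetric functions of the $\g_j$, land in $\OOx$ (using axioms \ref{axiom:inclusions}–\ref{axiom:weierstrass} and Proposition \ref{rk:Wtemp}, together with the monomial substitution axiom to descend from the $q$-th root covering back to $\x$). This is precisely where the \emph{temperateness} axiom \ref{axiom:last} enters: after restricting to a generic line $x_1$-axis direction (controlled so that no information is lost, via axiom \ref{axiom:Abhyankar-Moh} — this is the "generically hyperplane section" reduction flagged in $\S\S$\ref{ssec:RedLowDim}), the roots $\g_j$ are algebraic over a $\KK\nl t\nr$, conjugate under the Galois action of a minimal polynomial $\G(t,z)$, and the series $P(\x,\g_j(x_2))$ have the polynomial-coefficient-with-controlled-degree form $\sum_k x_1^k p_k(x_2,z)$ required by axiom \ref{axiom:last}; temperateness then transfers W-membership from $\g_1$ to $\g'=\g_j$. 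I expect the bookkeeping of degrees — verifying that the expansion of $P$ composed with a root genuinely has $\deg_{x_2}(p_k)\le \alpha k$ for a fixed $\alpha$, which is what makes the temperateness axiom applicable — to be the delicate technical point, relying on the finiteness/weighted structure coming from the blowing-up chart $\phi(x_2)=u_1u_2$ and Weierstrass preparation bounds on the $a_i$.
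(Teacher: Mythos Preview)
Your proposal has a genuine gap at the very first reduction: you cannot arrange $\Disc_y(P)=x_1^{a_1}x_2^{a_2}\cdot u(\x)$ by ``a generic linear change in $(x_1,x_2)$ and Weierstrass preparation''. The discriminant is an arbitrary formal plane curve germ, and making it a normal-crossings monomial requires \emph{resolution of singularities}, i.e.\ a finite sequence of point blowings-up $\s:(N_r,F_r)\to(N_0,0)$. This is exactly what the paper does (see \S\S\ref{ssec:red2}): it chooses such a sequence, observes that at one point $\pb_0\in F_r^{(1)}$ the pullback $P_{\pb_0}$ acquires the temperate linear factor $y-\psi(\vb)$ coming from $\phi$, and then runs an induction on the blowing-up tree (Proposition~\ref{prop:inductivescheme}) to descend a temperate factor all the way back to $P$.

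Because of this, several subsequent steps in your sketch do not line up. First, axiom~\ref{axiom:Abhyankar-Moh} plays no role in the proof of Theorem~\ref{thm:red2}; the paper states explicitly that it is used only once, in \S\S\ref{ssec:RedLowDim}, to reduce Theorem~\ref{thm:red1} to Theorem~\ref{thm:red2}. Second, the temperateness axiom~\ref{axiom:last} has very rigid hypotheses: $\G\in\KK[t,z]$ must be an honest \emph{polynomial} (not a power series) splitting in $\KK\nl t\nr[z]$, and the series to which it is applied must have the shape $\sum_k x_1^k p_k(x_2,z)$ with $\deg_{x_2}(p_k)\leq\alpha k$. Producing this shape from the roots of $P$ is not a matter of ``restricting to a generic line''; it requires the Newton--Puiseux--Eisenstein factorization (Theorem~\ref{thm:NewtonPuiseux}, Corollary~\ref{cr:NewtonPuiseux}) into factors over the projective rings $\PP_h\lb\x\rb$, together with the detailed analysis of how the homogeneous elements $\g_{i,\eta}$ transform under $\s_{\pb_0}^*$ carried out in Theorem~\ref{local_to_semiglobal}. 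That theorem is the actual engine: it shows that a temperate factor of $P_{\pb_0}$ forces one entire irreducible $\PP_h$-factor $Q$ of $P$ to have temperate pullback, and the induction of Proposition~\ref{prop:inductivescheme} (via Lemmas~\ref{lemma:TempExt} and~\ref{lem:inductive} and Theorem~\ref{thm:extension}) then shows $Q\in\OOx[y]$; irreducibility of $P$ gives $P=Q$. Your outline skips both the resolution step and the $\PP_h$-machinery that makes axiom~\ref{axiom:last} applicable, and without them the argument does not go through.
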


\begin{proof}[Reduction of Theorem \ref{thm:red1} to Theorem \ref{thm:red2}]
We follow closely \cite[3rd Reduction]{BCR}. Assume that there is a morphism $\phi$ satisfying the hypothesis of Theorem \ref{thm:red1} but where $P\notin \OOx[y]$. 

\medskip
\noindent
$(1)$ First, after a linear change of coordinates in $\ub$ we may assume that \linebreak$\phi(x_1) (u_1,0,\ldots, 0)\neq 0$. Thus, the morphism  $\s\circ\phi$, where $\s$ is given by
$$
\s(u_1)=u_1\text{ and } \s(u_i)=u_1u_i \ \ \forall i>1
$$
 satisfies the hypotheses of Theorem \ref{thm:red1} and its kernel is generated by $P$, by Proposition \ref{prop:inv_ranks}. Thus we may assume that $\phi(x_1)=u_1^e\times U(\ub)$ where $U(\ub)$ is a unit. And by replacing $x_1$ by $\frac{1}{U(0)}x_1$ we may further assume that $U(0)=1$.
 
 \medskip
\noindent
 $(2)$ We define the morphism $\t$ by
 $$
 \t(x_1)=x_1^e \text{ and } \t(x_i)=x_i\ \ \forall i>1.
 $$
 Let $V(\ub)\in\KK\lb \ub\rb$ be a power series such that $V(\ub)^e=U(\ub)$. Such a power series exists since $U(0)=1$ and, by the Implicit Function Theorem cf. Proposition \ref{rk:Wtemp} \ref{prop:hensel}, $V(\ub)\in\OOu$. We define the morphism $\psi$ by
 $$
 \psi(x_1)=u_1V(\ub)\text{ and } \psi(x_i)=\phi(x_i)\ \ \forall i>1.
 $$
Then $\psi\circ\t=\phi$ and $P(x_1^e,x_2,\ldots, x_n,y)\in\Ker(\wdh\psi)$. Since $P(x_1^e,x_2,\ldots, x_n,y)$ is a Weierstrass polynomial in $y$, $\Ker(\wdh\psi)$ is generated by a Weierstrass polynomial $Q$ that divides $P$. Thus $P$ is the product of $Q$ with the distinct polynomials $Q(\xi x_1,x_2,\ldots, x_n,y)$ where $\xi$ runs over the $e$-th roots of unity. Therefore, if $Q\in\OOx[y]$,  $P\in\OOx[y]$ which contradicts the hypothesis. Thus, $\psi$ satisfies the hypothesis of Theorem \ref{thm:red1} but $\Ker(\wdh\psi)$ is generated by a Weierstrass polynomial that is not in $\OOx[y]$. By Proposition \ref{prop:inv_ranks}, we may replace $\phi$ by $\psi$ and assume that $\psi(x_1)=x_1$ by composing $\psi$ by the inverse of the temperate automorphism that sends $u_1$ onto $u_1V(\ub)$.

\medskip
\noindent
$(3)$ Now we have $\phi(x_1)=u_1$ and we perform ``Gabrielov's trick", cf. \cite[Example 3.5]{BCR}. We denote by $\phi_i(\ub)$ the image of $x_i$ by $\phi$. We consider the temperate automorphism $\chi$ defined by
$$\chi(x_1)=x_1\text{ and } \chi(x_i)=x_i-\phi_i(x_1,0,\ldots, 0)\ \ \forall i>1.$$
If we replace $\phi$ by $\phi\circ\chi$ we may assume that every nonzero monomial of $\phi(x_i)$ is divisible by one of the $u_i$ for $i>1$. Then by replacing $\phi$ by $\s\circ\phi$, where $\s$ is defined above, we may assume that $\phi(x_i)=u_1^{a_i}g_i(\ub)$ where $a_i>0$, $g_i(0)=0$ and $g_i(0,x_2,\ldots, x_n)\neq 0$, for $i>1$. Moreover, by composing with the morphism 
\[
x_i\lgm x_i^{\prod_{k\neq i}a_k}
\]
for $i>1$, we may assume that $a_i=a$ is independent of $i$. Finally, by replacing $x_1$ by $x_1^{a+1}$ we may assume that $\phi(x_1)=u_1^{a+1}$. Composing $\phi$ with these morphisms does not change the ranks, by Proposition \ref{prop:inv_ranks}.

\medskip
\noindent
$(4)$ Now we set, for $\la=(\la_2,\ldots, \la_n)\in\KK^{n-1}\setminus\{0\}$, $h_\la=x_1-\sum_{i=2}^n \la_ix_i$. We have
$$\phi(h_\la)=u_1^ag_\la(\ub)$$
where  $g_\la(\ub)=u_1-\sum_{i=2}^n\la_ig_i(\ub)\in\OOu$. By the implicit function Theorem, there exists a unique nonzero $\xi_\la(u_2,\ldots, u_n)\in\OOu$ such that $\xi_\la(0)=0$ and 
$$g_\la(\xi_\la(x_2,\ldots, x_n),x_2,\ldots, x_n)=0.$$
Let $M(\ub)$ be a nonzero minor of the Jacobian matrix of $\phi$ that is of maximal rank. Then assume that $M(\ub)$ is divisible by $h_\la(\ub)$ for every $\la\in\Lambda$, where $\Lambda$ cannot be written as a finite union of sets included in proper affine subsets of $\KK^{n-1}$. Thus, because $\KK\lb\ub\rb$ is a UFD, there is  a finite number of subsets $\Lambda_k\subset\KK^{n-1}$, $k=1$, \ldots, $N$, whose union equals $\Lambda$, and such that for every $\la$, $\la'\in \Lambda_k$, $h_\la$ and $h_{\la'}$ are equal up to multiplication by a unit. Thus, by the assumption on $\Lambda$, there is $k$ such that 
$\Lambda_k$ contains $n$ vectors of $\KK^{n-1}$, denoted by $\la^{(1)}$, \ldots, $\la^{(n)}$ such that the vectors $\la^{(i)}-\la^{(1)}$ are $\KK$-linearly independent. Therefore, there are units $U_i(\ub)$, for $i=2$, \ldots, $n$, such that
$$u_1-(g)\cdot\la^{(i)}=U_i(\ub)(u_1-(g)\cdot\la^{(1)})\ \ \forall i=2,\ldots, n$$
where $(g)$ denote the vector whose entries are the $g_i(\ub)$. This implies that the $(g)\cdot (\la^{(i)}-\la^{(1)})$ are divisible by $h_{\la^{(1)}}$. But the $\la^{(i)}-\la^{(1)}$ being $\KK$-linearly independent, every $g_i(\la)$ is divisible by $h_{\la^{(1)}}$, thus $u_1$ is divisible by $h_{\la^{(1)}}$, which implies that $\xi_{\la^{(1)}}=0$ contradicting the assumption. Therefore, there is a finite union of proper affine subspaces of $\KK^{n-1}$, denoted by  $\Lambda$, such that   $M(\ub)$ is not divisible by $h_\la$ for every $\la\in\KK^{n-1}\setminus\Lambda$. In particular $\KK^{n-1}\setminus\Lambda$ is uncountable.

After a linear change of coordinates, we may assume that $\KK\times\{0\}^{n-2}$ is not included in $\Lambda$, in particular $(\KK\times\{0\}^{n-2})\cap\Lambda$ is finite.
For any $(\la,0,\ldots, 0)\in(\KK\times\{0\}^{n-2})\setminus\Lambda$ the morphism
$$
\psi_\la:\frac{\KK\lb x_1,\ldots , x_n\rb[y]}{(x_1-\la x_2)}\lgw \frac{\KK\lb u_1,\ldots, u_n\rb[y]}{(u_1-\la g_2(\ub))}
$$
 is of rank $\Gr(\phi_\la)=n-1$. Then if $n>3$, by Bertini's Theorem \cite[Theorem 3.4]{BCR}, and by Definition \ref{W-fam} \ref{axiom:Abhyankar-Moh} (note that this is the only point of the paper where we use Definition \ref{W-fam} \ref{axiom:Abhyankar-Moh}), the polynomial $P$ remains irreducible and not in $\OOx[y]$ in $$\frac{\KK\lb x_1,\ldots , x_n\rb[y]}{(x_1-\la x_2)}\simeq\KK\lb x_2,\ldots, x_n\rb[y]$$ when $\la$ belongs to $W\subset \KK$ that is uncountable. Therefore we can choose $(\la,0,\ldots, 0)\in (W\times\{0\}^{n-2})\setminus\Lambda$, and this allows us replace $n$ by $n-1$ in Theorem \ref{thm:red1}. By repeating this process, we construct an example of a morphism $\phi$ with $n=2$ satisfying  Theorem \ref{thm:red2} (i) such that $\Ker(\wdh\phi)$ is generated by a Weierstrass polynomial that is not $\OOx[y]$; note that we must stop the reduction at $n=2$, because Bertini's Theorem does not hold for $n<3$, cf. \cite[Remark 3.6(3)]{BCR}. Moreover, by repeating the argument given in part (2) if necessary, we may assume that $\phi(x_1)=u_1$, and $\phi(x_2)$ has the form $u_1^ag(\ub)$ with $g(0)=0$ and $g(0,u_2)\neq 0$.
 
 By composing $\phi$ with the morphism $\s$ defined in (1), we can assume that $g(\ub)=u_2^bU(\ub)$ for some unit $U(\ub)$. Now let $\s'$ be the morphism  defined by $\s'(u_1)=u_1^b$ and $\s'(u_2)=u_1u_2^{a+1}$. Then, we have
$$\s'\circ\phi(x_1)=u_1^b\text{ and } \s'\circ\phi(x_2)=(u_1u_2)^{b(a+1)}V( \ub)$$
for some unit $V( \ub)$. Therefore, as done in (2), we can assume that
$$
\phi(x_1)=u_1\text{ and } \phi(x_2)=u_1u_2.
$$
Hence, we have constructed a morphism $\phi$ that satisfies the hypothesis of Theorem \ref{thm:red2}, but $\Ker(\wdh\phi)$ is generated by Weierstrass polynomial in $y$ that is not in $\KK\nl \x\nr[y]$, contradicting Theorem \ref{thm:red2}.
\end{proof}

The rest of this section is devoted to the proof of Theorem \ref{thm:red2}, given in $\S\S$\ref{ssec:red2}.

\subsection{Newton-Puiseux-Eisenstein Theorem}
In \cite[Section 5]{BCR}, we provided a framework allowing us to obtain a good factorization of a polynomial in $\C\lb\x\rb[y]$. We recall here the main definitions and adapt the main results to the more general context of polynomials in $\KK\lb \x\rb[y]$.

Consider the ring of power series $\KK\lb \x \rb$ where $\x=(x_1,  \ldots, x_n)$ and denote by $\KK(\!(\x)\!)$ its field of fractions. We denote by $\nu$ the $(\x)$-adic valuation on $\KK\lb \x\rb $. The valuation $\nu$ extends to $\KK(\!(\x)\!)$ by defining $\nu(f/g)=\nu(f)-\nu(g)$ for every $f$, $g\in\KK\lb \x\rb$, $g\neq 0$. Denote $V_\nu$ the valuation ring of $\nu$ in $\KK(\!(\x)\!)$, and $\wdh V_\nu$ its completion with respect to $\nu$. Let us now recall the notion of homogeneous element:

\begin{definition}[Homogeneous elements]\label{homogeneous_element}
Let $\omega=p/e\in \Q_{>0}$. We say that $\G\in \KK[\x,z]$ is \emph{$\omega$-weighted homogeneous} if $\G(x_1^e,\cdots,x_n^e,z^p)$ is homogeneous.

A \emph{homogeneous element} $\g$ is an element of an algebraic closure of $\KK(\x)$, satisfying a relation of the form $\Gamma(\x,\g)=0$ for some $\omega$-weighted homogeneous polynomial $\Gamma(\x,z)$, where $\omega \in \Q_{> 0}$. Furthermore, if $\Gamma(\x,z)$ is monic in $z$, we say that $\g$ is an \emph{integral homogeneous element}. In this case, $\omega$ is called the \emph{degree} of $\g$. 
\end{definition}

Given an integral homogeneous element $\gamma$ of degree $\omega$, there exists an extension of the valuation $\nu$, still denoted by $\nu$, to the field $\KK(\x)[\g]$, defined by
$$
\nu\left(\sum_{k=0}^{d-1}a_{k}(\x)\g^{k}\right)=\min\{\nu(a_{k})+k\omega\}.
$$
where $d$ is the degree of the field extension $\KK(\x) \lgw \KK(\x)[\g]$. We denote $V_{\nu,\g}$ the valuation ring of $\nu$ in $\KK(\x)[\g]$, and $\wdh V_{\nu,\g}$ its completion with respect to $\nu$.

\begin{definition}[Projective rings and temperate projective rings]
\label{def:ProjRings}Let $h\in \KK[\x]$ be a homogeneous polynomial. Denote by $\PP_h(\!(x)\!)$ the ring  of elements $A$  for which there is $k_0 \in \Z$, $\a$, $\b\in\N$ and $a_k(\x)$ homogeneous polynomials in $\KK[x]$ for $k\geq k_0$ such that:
\[
A = \sum_{k\geq k_0} \frac{a_k(\x)}{h^{\a k+\b}}, \quad \text{ where } \nu(a_k)-(\a k+\b)\nu(h)=k,\, \forall\,k\geq k_0
\]
We denote by $\PP_h\lb \x\rb$ the subring of $\PP_h (\!(\x)\!)$ of elements $A$ such that $k_0$ belongs to $\mathbb{Z}_{\geq 0}$, and we denote by $\PP_h\nl\x\nr$ the subring of $\PP_h\lb \x\rb$ of elements $A$ such that 
\[\sum_{k\geqslant k_0} a_k(\x)\in \KK\nl\x\nr.
\]
When $\g$ is an integral homogeneous element, we denote by $\PP_{h}\lb \x,\g \rb$ the subring of $\wdh  V_{\nu,\g}$, whose elements $\xi$ are of the form:
\[
\xi=\sum_{k=0}^{d-1} A_k(\x) \gamma^k, \quad \text{ where } A_k \in \PP_h (\!(\x)\!) \text{ and } \nu(A_{k}(\x)\g^{k})\geq 0, \, \, \, k=0,  \ldots,d-1.
\]
\end{definition}

\begin{remark}
Lemma \ref{lemma:TempExt} below shows that if $A\in \PP_h\lb\x\rb$, the fact that $A\in \PP_h\nl\x\nr$ is independent of the presentation of $A$, that is, $\PP_h\nl\x\nr$ is well-defined. This observation greatly simplifies \cite[Prop 5.13]{BCR}, which relied in complex analysis.
\end{remark}

The next two results have been proven (in greater generality) in \cite{R}, but we refer the reader to \cite{BCR} where the statement is given when $\KK=\C$, but whose proof remains valid in the case of a general characteristic zero field.

\begin{theorem}[{Newton-Puiseux-Eisenstein, cf. \cite[Th 5.8]{BCR}}]\label{thm:NewtonPuiseux}
Let $\KK$ be a characteristic zero field and let $P(\x,y) \in \KK\lb \x\rb [y]$ be a monic polynomial. There exists an integral homogeneous element $\g$, and a homogeneous polynomial $h(\x)$, such that $P(\x,y)$ factors as a product of degree $1$ monic polynomials in $y$ with coefficients in $\PP_h\lb \x, \g\rb$.
\end{theorem}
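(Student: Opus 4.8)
The plan is to reduce the problem to the classical Puiseux expansion for power series in one variable, and then track carefully where the coefficients live. First I would pass to a "generic curve" via a monomial substitution. Concretely, after a linear change of the $\x$-variables, I would substitute $x_i = x_1^{a_i} t_i$ (or in a first pass simply restrict to a generic line $x_i = \lambda_i x_1$) so that $P$ becomes, up to clearing a power of $x_1$, a monic polynomial $\widetilde P(x_1, y) \in \KK\lb x_1 \rb[y]$ in a single variable $x_1$ — here one must be slightly careful because the coefficients of $P$ are power series, not polynomials, so one works with the $(\x)$-adic expansion $P = \sum_{j} P_j$ into weighted-homogeneous pieces and organizes the substitution to respect the grading. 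The classical Newton–Puiseux theorem over $\KK\lb x_1 \rb$ (valid in characteristic zero) then gives roots of the form $\sum_{k \ge k_0} c_k x_1^{k/e}$ for some ramification index $e$.

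Next I would reorganize this expansion. The key structural observation is that each coefficient appearing in a root, after undoing the substitution, is a homogeneous rational expression in $\x$ of a controlled weighted degree, and the denominators that appear are powers of a single homogeneous polynomial $h(\x)$ — the "leading form" controlling the Newton polygon of $P$, essentially a resultant/discriminant-type polynomial of the weighted-homogeneous initial part of $P$. This is exactly the content of Definition~\ref{def:ProjRings}: an element of $\PP_h(\!(\x)\!)$ is precisely a Laurent-type series $\sum_k a_k(\x)/h^{\a k + \b}$ with the homogeneity bookkeeping $\nu(a_k) - (\a k + \b)\nu(h) = k$. So I would verify that the Puiseux coefficients assemble into elements of $\PP_h(\!(\x)\!)$, the ramification being absorbed by introducing the integral homogeneous element $\g$ of the appropriate degree $\omega = 1/e$ (or a suitable fraction), so that the roots lie in $\PP_h\lb \x, \g\rb \subset \widehat V_{\nu,\g}$. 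The factorization of $P$ into degree-one monic factors $\prod (y - \xi_j)$ with $\xi_j \in \PP_h\lb \x, \g\rb$ then follows, using that $\PP_h\lb\x,\g\rb$ is a ring (so the elementary symmetric functions of the roots, which give back the coefficients of $P$, stay in this ring) and that they must in fact land in the $\KK\lb\x\rb$-coefficients we started with.

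Since the excerpt states that this result "has been proven (in greater generality) in \cite{R}" and that the argument over $\C$ in \cite[Th 5.8]{BCR} is characteristic-zero-valid, the cleanest route is simply to invoke that: the proof over a general characteristic zero field $\KK$ is word-for-word the proof of \cite[Th.~5.8]{BCR}, since that proof only uses the classical Newton–Puiseux theorem over $\KK\lb x_1\rb$ (which holds in any characteristic zero field), the fact that $\KK\lb\x\rb$ is a UFD, Weierstrass preparation, and formal manipulations with the valuation $\nu$ — none of which require $\KK = \C$ nor algebraic closure (the homogeneous element $\g$ is allowed to live in an algebraic closure of $\KK(\x)$ anyway). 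I would therefore present the proof as: reduce to the weighted-homogeneous graded pieces of $P$, apply classical Puiseux in the variable $x_1$ to read off the ramification index $e$ and the controlling homogeneous polynomial $h$, introduce $\g$ of degree $1/e$, and check the homogeneity constraints of Definition~\ref{def:ProjRings} hold term by term. The main obstacle — and the only place needing genuine care rather than a citation — is the bookkeeping that identifies the denominators as powers of a \emph{single} homogeneous $h$ and verifies the precise weight identity $\nu(a_k) - (\a k + \b)\nu(h) = k$; this is where the Newton-polygon combinatorics of $P$ enters, and it is handled exactly as in \cite{BCR}.
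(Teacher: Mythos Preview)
The paper does not give its own proof of this theorem: it simply states that the result has been proven in greater generality in \cite{R}, and that the proof in \cite[Th.~5.8]{BCR} over $\C$ remains valid verbatim over any characteristic zero field. Your proposal does exactly the same thing --- you ultimately defer to \cite[Th.~5.8]{BCR} and note that nothing in that argument uses $\KK=\C$ --- so your approach matches the paper's. Your additional sketch of how that proof goes (reduce to one-variable Puiseux via the $(\x)$-adic grading, identify the ramification index $e$ and the controlling homogeneous polynomial $h$ from the Newton polygon, then verify the bookkeeping of Definition~\ref{def:ProjRings}) is a reasonable outline of the cited argument and is not required by the paper itself.
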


The following result is a convenient reformulation of Theorem \ref{thm:NewtonPuiseux}:
\begin{corollary}[{Newton-Puiseux-Eisenstein factorization, cf. \cite[Cor 5.9]{BCR}}]\label{cr:NewtonPuiseux}
Let $\KK$ be a characteristic zero field and let $P\in\KK\lb \x\rb[y]$ be a monic polynomial. Then, there is a homogenous polynomial $h$ and  integral homogenous elements $\g_{i,j}$, such that $P$ can be written as 
\begin{equation}\label{eq:key_factorization}
P(\x,y)=\prod\limits_{i=1}^s Q_i, \quad \text{ and } \quad  Q_i=\prod\limits_{j=1}^{r_i} (y-\xi_i(\x,\g_{i,j}))
\end{equation}
where 
\begin{enumerate}[label=(\roman*)]
\item the $Q_i\in \PP_h\lb \x\rb [y]$ are irreducible in $\wdh V_\nu[y]$, 
\item for every $i$, there are $A_{i,k}(\x)\in\PP_h(\!(\x)\!)$, for $0\leq k\leq k_i$ such that
$$\xi_i(\x,\g_{i,j})=\sum_{k=0}^{k_i}A_{i,k}(\x)\g_{i,j}^k\in\PP_{h}\lb \x,\g_{i,j} \rb$$
\item
for every $i$, the $\g_{i,j}$ are distinct conjugates of an homogeneous element $\g_i$, that is, roots of its minimal polynomial $\Gamma_i$ over $\KK(\x)$.
\end{enumerate}
\end{corollary}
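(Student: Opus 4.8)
The plan is to deduce the factorization \eqref{eq:key_factorization} directly from Theorem \ref{thm:NewtonPuiseux} by regrouping the degree $1$ factors produced there along the monic irreducible factors of $P$ over the complete valuation ring $\wdh V_\nu$; this is the purely algebraic argument of \cite[Cor. 5.9]{BCR}, which remains valid over a general characteristic zero field $\KK$. First I would apply Theorem \ref{thm:NewtonPuiseux} to get a homogeneous polynomial $h$, an integral homogeneous element $\g$ of degree $\omega$, and a factorization $P=\prod_\ell(y-\xi_\ell)$ with $\xi_\ell\in\PP_h\lb\x,\g\rb$ for every $\ell$. Write $\Gamma$ for the minimal polynomial of $\g$ over $\KK(\x)$ and $d=\deg\Gamma=[\KK(\x)[\g]:\KK(\x)]$. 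Since in the domain $\KK[\x,z]$ every factor of a weighted homogeneous polynomial is again weighted homogeneous, $\Gamma$ is monic and $\omega$-weighted homogeneous; hence each root of $\Gamma$ is an integral homogeneous element of the same degree $\omega$, and for each such root the corresponding extension of $\nu$ assigns it the value $\omega$.

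Because the $(\x)$-adic valuation $\nu$ has value group $\Z$ on $\KK(\!(\x)\!)$, $V_\nu$ is a discrete valuation ring and $\wdh V_\nu$ a complete discrete valuation ring — in particular integrally closed and a unique factorization domain — and $\PP_h(\!(\x)\!)\subset\Frac(\wdh V_\nu)$, since each summand $a_k(\x)/h^{\a k+\b}$ of an element of $\PP_h(\!(\x)\!)$ lies in $\KK(\x)$ with $\nu$-value exactly $k$, so its defining series converges $\nu$-adically. Factoring $P=\prod_{i=1}^s Q_i$ into monic irreducibles in $\Frac(\wdh V_\nu)[y]$, integral closedness gives $Q_i\in\wdh V_\nu[y]$ and Gauss' lemma gives that $Q_i$ is irreducible in $\wdh V_\nu[y]$. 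Moreover $Q_i$ divides $\prod_\ell(y-\xi_\ell)$ over the larger ring, so it is the product of some of the $(y-\xi_\ell)$; renaming these roots $\xi_{i,1},\dots,\xi_{i,r_i}$, they lie in $\PP_h\lb\x,\g\rb$ and — $Q_i$ being irreducible over $\Frac(\wdh V_\nu)$ and separable (characteristic zero) — they are precisely the $\Frac(\wdh V_\nu)$-conjugates of $\xi_{i,1}$, hence pairwise distinct.

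To obtain (ii) and (iii), fix a presentation $\xi_{i,1}=\sum_{k=0}^{d-1}A_{i,k}(\x)\g^k$ with $A_{i,k}\in\PP_h(\!(\x)\!)$, coming from the definition of $\PP_h\lb\x,\g\rb$, and set $\xi_i(\x,z):=\sum_{k=0}^{d-1}A_{i,k}(\x)z^k$. Pick, for each $j$, a $\Frac(\wdh V_\nu)$-embedding $\sigma_j$ into an algebraic closure of $\Frac(\wdh V_\nu)$ with $\sigma_j(\xi_{i,1})=\xi_{i,j}$ (these exist because the $\xi_{i,j}$ are the conjugates of $\xi_{i,1}$). Since each $\sigma_j$ fixes $A_{i,k}\in\PP_h(\!(\x)\!)\subset\Frac(\wdh V_\nu)$, we get $\xi_{i,j}=\xi_i(\x,\g_{i,j})$ with $\g_{i,j}:=\sigma_j(\g)$ a root of $\Gamma$, i.e.\ a conjugate over $\KK(\x)$ of $\g_i:=\g$ with minimal polynomial $\Gamma_i:=\Gamma$. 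The $\g_{i,j}$ are pairwise distinct, because $\xi_i(\x,\g_{i,j})$ depends only on $\g_{i,j}$ while the $\xi_{i,j}$ are distinct; and $\xi_i(\x,\g_{i,j})\in\PP_h\lb\x,\g_{i,j}\rb$, since $\nu(\g_{i,j})=\omega=\nu(\g)$ gives $\nu(A_{i,k}\g_{i,j}^k)=\nu(A_{i,k}\g^k)\geq0$.

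It remains to prove (i), that $Q_i\in\PP_h\lb\x\rb[y]$. Each coefficient of $Q_i$ is a symmetric function of $\xi_{i,1},\dots,\xi_{i,r_i}$, hence — expanding in powers of $\g$ and reducing modulo $\Gamma$, whose coefficients are homogeneous polynomials and so lie in the ring $\PP_h(\!(\x)\!)$ — can be written $\sum_{k=0}^{d-1}B_k(\x)\g^k$ with $B_k\in\PP_h(\!(\x)\!)$. This element also lies in $\Frac(\wdh V_\nu)$, and $\Gamma$ stays irreducible there (its $\nu$-Newton polygon is a single segment of slope $-\omega$ and the corresponding residual polynomial remains irreducible over the residue field $\KK(x_2/x_1,\dots,x_n/x_1)$ by homogeneity, so the extension $\KK(\x)[\g]/\KK(\x)$ is defectless); therefore $1,\g,\dots,\g^{d-1}$ are $\Frac(\wdh V_\nu)$-linearly independent and $B_1=\dots=B_{d-1}=0$, so the coefficient equals $B_0\in\PP_h(\!(\x)\!)$ with $\nu(B_0)\geq0$, that is $B_0\in\PP_h\lb\x\rb$ — because in a $\PP_h$-expansion the $k$-th summand has $\nu$-value $k$, so nonnegative valuation forces $k_0\geq0$. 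The step I expect to be the main obstacle is precisely this control of which ring the regrouped coefficients live in: it relies on matching the rings $\PP_h(\!(\x)\!)$, $\PP_h\lb\x\rb$, $\PP_h\lb\x,\g\rb$ with the valuation layers $\Frac(\wdh V_\nu)\supset\wdh V_\nu$ and $\Frac(\wdh V_{\nu,\g})\supset\wdh V_{\nu,\g}$, on the defectlessness of $\KK(\x)[\g]/\KK(\x)$, and on the fact that successive summands of a $\PP_h$-expansion carry successive $\nu$-values.
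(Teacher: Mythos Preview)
Your argument is correct and is precisely the algebraic regrouping from \cite[Cor.~5.9]{BCR} that the paper invokes; the paper itself gives no proof beyond declaring the corollary a ``convenient reformulation'' of Theorem~\ref{thm:NewtonPuiseux} and pointing to \cite{BCR,R}. Two remarks on the one genuinely delicate step, the irreducibility of $\Gamma$ over $\Frac(\wdh V_\nu)$: your defectlessness claim is right, and a clean way to see it is to write $\Gamma(\x,z)=G(\x,z^e)$ with $G$ monic of degree $m=d/e$ and $p$-weighted homogeneous with integer weight; then $G(1,t_2,\dots,t_n,W)$ is irreducible over the residue field $\KK(t_2,\dots,t_n)$ (by dehomogenization), so $G$ stays irreducible over the complete field $\Frac(\wdh V_\nu)$ and generates an unramified degree-$m$ extension, while $z^e-\alpha$ (with $\nu(\alpha)=p$, $\gcd(p,e)=1$) is totally ramified of degree $e$ over it, giving $[\Frac(\wdh V_\nu)(\g):\Frac(\wdh V_\nu)]=em=d$. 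Your phrase ``the corresponding residual polynomial'' is slightly ambiguous --- it is $\bar G$, not $\bar G(w^e)$, whose irreducibility you need --- but the conclusion stands.
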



\subsection{Blowings-up and the geometric setting}

In what follows, we use algebraic-geometry methods concerning blowings-up $\sigma: N' \lgw N $, where $N$ will stand for some affine space over $\KK$ (the precise meaning of this statement will be clarified in this subsection). Nevertheless, and in contrast to usual algebraic and analytic geometry, we do not have access ,as far as we know, to a theory of varieties and sheaves valid for W-temperate families. We do not have the ambition to develop such a general theory in here, but rather to introduce the minimal set of definitions which are necessary for this work. In particular, we will greatly exploit the fact that we only need to work over $\KK \nl \x \nr$, where $\x=(x_1,x_2)$ stands for \emph{two indeterminates}, in order to avoid a more technical discussion.

Let us start by fixing a set of indeterminate $\x = (x_1,\ldots,x_n)$, and a W-temperate ring $\KK \nl \x \nr$. In what follows, we will often need to change indeterminate:

\begin{definition}[Temperate automorphism]
Let $\phi$ be a $\KK$-automorphism of the ring of power series $\KK\lb \x\rb$. We say that $\phi$ is \emph{temperate} if $\phi(\KK\nl \x\nr)\subset\KK\nl \x\nr$.
\end{definition}

\begin{lemma}
A $\KK$-automorphism $\phi$ given by series $\phi(x_i)\in\KK\lb \x\rb$ is temperate if and only if for every $i$ we have $\phi(x_i)\in\KK\nl \x\nr$. In this case  $\phi^{-1}$ is also temperate. In particular, when $\phi$ is temperate, we have $\phi(\KK\nl \x\nr)=\KK\nl \x\nr$.
\end{lemma}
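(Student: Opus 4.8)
The plan is to prove the three assertions in order: first, that a $\KK$-automorphism $\phi$ of $\KK\lb \x\rb$ is temperate iff $\phi(x_i)\in\KK\nl\x\nr$ for every $i$; second, that in that case $\phi^{-1}$ is temperate; and third, that consequently $\phi(\KK\nl\x\nr)=\KK\nl\x\nr$. The forward implication of the first claim is trivial, and the reverse implication is precisely the content of Proposition \ref{rk:Wtemp}\,\ref{prop:composition}: if each $\phi(x_i)\in(\x)\KK\nl\x\nr$ (note that since $\phi$ is a local $\KK$-automorphism, each $\phi(x_i)$ has zero constant term, so lies in the maximal ideal), then for any $f\in\KK\nl\x\nr$ we get $\phi(f)=f(\phi(x_1),\ldots,\phi(x_n))\in\KK\nl\x\nr$ by iterating the substitution property \ref{prop:composition}. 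Hence $\phi(\KK\nl\x\nr)\subset\KK\nl\x\nr$, i.e. $\phi$ is temperate.

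The substantive point is the second claim, that $\phi^{-1}$ is again temperate. I would argue as follows. Write $\phi(x_i)=\ell_i(\x)+(\text{higher order})$ where $\ell_i$ is the linear part; since $\phi$ is an automorphism of $\KK\lb\x\rb$ fixing $\KK$ and preserving the maximal ideal, the Jacobian matrix $[\partial_{x_j}\phi(x_i)(0)]_{i,j}$ is invertible over $\KK$. After composing $\phi$ with the (temperate, since linear) inverse of that linear automorphism, we may assume $\phi(x_i)=x_i+g_i(\x)$ with $g_i\in(\x)^2\KK\nl\x\nr$. Now I want to solve $\phi(\psi(x_i))=x_i$ for the components $\psi(x_i)=:h_i(\x)$ of the candidate inverse, i.e. the system
\[
h_i(\x)+g_i(h_1(\x),\ldots,h_n(\x))=x_i,\qquad i=1,\ldots,n.
\]
This is a system of the form $F(\x,\bold{h})=0$ with $F_i(\x,\y)=y_i+g_i(\y)-x_i\in\KK\nl\x\nr[\y]$, and it visibly has a formal power series solution $\bold{h}=(\widehat\psi(x_1),\ldots,\widehat\psi(x_n))\in\KK\lb\x\rb^n$ (the components of the formal inverse, which exists by the formal inverse function theorem). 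By the Artin Approximation Theorem \ref{Artin}, there is a solution $\bold{h}^{(c)}\in\KK\nl\x\nr^n$ with $\bold{h}^{(c)}-\widehat\psi(\x)\in(\x)^c$; but uniqueness of the formal solution (the Jacobian of $F$ in $\y$ at the origin is the identity, so the implicit/inverse function theorem gives a unique formal solution) forces $\bold{h}^{(c)}=\widehat\psi(\x)$ for $c\geq 2$. Hence $\widehat\psi(x_i)\in\KK\nl\x\nr$ for all $i$, so by the first claim $\psi:=\widehat\psi|_{\KK\nl\x\nr}$ is temperate; and $\psi$ is a two-sided inverse of $\phi$ since it is so formally. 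Undoing the linear reduction (linear automorphisms and their inverses are temperate) gives that $\phi^{-1}$ is temperate in general.

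Finally, the third claim is immediate from the first two: temperateness of $\phi$ gives $\phi(\KK\nl\x\nr)\subset\KK\nl\x\nr$, and temperateness of $\phi^{-1}$ gives $\phi^{-1}(\KK\nl\x\nr)\subset\KK\nl\x\nr$, i.e. $\KK\nl\x\nr\subset\phi(\KK\nl\x\nr)$; combining, $\phi(\KK\nl\x\nr)=\KK\nl\x\nr$. I expect the main obstacle to be the careful bookkeeping around uniqueness of the formal solution of the inverse-function system so that Artin approximation actually pins down $\widehat\psi$ itself rather than merely some nearby convergent solution; an alternative, perhaps cleaner, route avoiding Artin would be to invoke Corollary \ref{cor:Artin2} or the Henselian/Weierstrass structure directly to build $\phi^{-1}$ component by component via the Weierstrass preparation theorem \ref{rk:Wtemp}\,\ref{Wprep}, but the Artin approximation argument is the most transparent and self-contained given what has already been set up.
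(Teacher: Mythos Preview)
Your argument for the first and third claims matches the paper exactly. For the second claim your approach via Artin approximation is more elaborate than the paper's, which simply says: since $\KK\nl\x\nr$ is Henselian (Proposition~\ref{rk:Wtemp}\,\ref{prop:hensel}), the Implicit Function Theorem holds in $\KK\nl\x\nr$, so the system $y_i+g_i(\y)-x_i=0$ (whose Jacobian in $\y$ at the origin is the identity) has a unique solution $h_i\in(\x)\KK\nl\x\nr$, and this solution must be $\phi^{-1}(x_i)$.

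There is a genuine technical slip in your Artin route: you assert $F_i(\x,\y)=y_i+g_i(\y)-x_i\in\KK\nl\x\nr[\y]$, but $g_i\in(\x)^2\KK\nl\x\nr$ is a power series, not a polynomial, so $F_i\in\KK\nl\x,\y\nr$ rather than $\KK\nl\x\nr[\y]$. Theorem~\ref{Artin} as stated in the paper applies only to systems that are polynomial in the unknowns $\y$, so it does not apply directly to your $F$. One can extend Artin approximation to analytic systems, but this takes additional work and is in any case circular here, since the usual reduction already uses the Henselian/IFT property you are trying to establish. Your own closing remark points at the correct fix: invoke the Henselian property (equivalently, the Implicit Function Theorem in $\KK\nl\x\nr$) directly, which is exactly what the paper does in one line.
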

\begin{proof}
The condition is necessary by definition, and sufficient from the fact that $\KK\nl \x\nr$ is stable by composition. Now $\phi^{-1}$ is also temperate since $\KK\nl \x\nr$ satisfies the implicit function Theorem, cf. Proposition \ref{rk:Wtemp}\,\ref{prop:hensel}.
\end{proof}

It follows directly from this lemma that any $\KK$-linear automorphism in $\x$ is a temperate automorphism. We are ready to introduce the notion of temperate coordinate systems:

\begin{definition}[Temperate coordinates]
Let $\KK\nl \x \nr$ be a temperate ring. A system of parameters $\widetilde{\x}$ of the ring $\KK\lb \x\rb$ is said to be \emph{temperate} if $\widetilde{\x}$ is obtained from $\x$ by a temperate $\KK$-automorphism. A system of parameters $\widehat{\x}$ of $\KK\lb \x\rb$ which is not temperate will be called formal.

We will denote by $\mathcal{O}$ the intrinsic ring of temperate power series associated to $\KK\nl \x \nr$ up to temperate automorphisms, that is, $\mathcal{O}$ denotes $\KK\nl \x \nr$, for some temperate coordinate system $\x$, and is isomorphic to $\KK\nl \widetilde{\x} \nr$ for any temperate coordinate system $\widetilde{\x}$.
\end{definition}

We now specialize to the case that $n=2$. Let $\KK \nl x_1,x_2 \nr$ be a temperate ring and $N_0 = \wdh{\mathbb A}^2_\KK$ be the  affine scheme associated to the complete local ring $\KK\lb x_1,x_2\rb$. We denote by $\mathcal{O}_0$ and $\widehat{\mathcal{O}}_{0}$ the rings of temperate and formal power series at $0$. We consider the formal blowing-up of the origin:
\[
\sigma : (N_1, E_1) \lgw (N_0,0)
\]
where $\s^{-1}(0) = E$ is called the exceptional divisor. Given any closed point $\pb \in E$, we can localize $\sigma$ to $\pb$ in order to obtain a morphism between local rings $\s_{\pb}^* : \widehat{\mathcal{O}}_{0} \lgw\widehat{\mathcal{O}}_{\pb}$, where $\widehat{\mathcal{O}}_{\pb}$ stands for the local ring of formal power series at $\pb$. Now, apart from a $\KK$-linear change of indeterminacy in $\x$ (which is a temperate change of coordinates), we may suppose that $\pb$ is  the origin of the $x_1$-chart of the blowing-up, that is, there exists a system of parameters $\vb = (v_1,v_2)$ of $\widehat{\mathcal{O}}_{\pb}$ such that $\s_{\pb}^* : \KK \lb \x \rb \lgw\KK \lb \vb \rb$ is given by
\[
(x_1,x_2) \lgm (v_1,v_1v_2).
\]
We note that the ideal of $E$ is generated by $v_1$ in this chart.

\begin{definition}
Following the above construction, we say that $\vb = (v_1,v_2)$ is a system of temperate coordinates at $\pb$. In particular $\sigma_{\pb}^*$ induces a morphism:
\[
\s_{\pb}^* : \KK \nl \x \nr \lgw \KK \nl \vb \nr.
\]
\end{definition}

The next lemma shows that this definition is consistent with temperate changes of coordinates, allowing us to write:
\[
\s_{\pb}^{\ast} : \mathcal{O}_0 \lgw \mathcal{O}_{\pb}.
\]
\begin{lemma}
Let $\widetilde{\x}=(\widetilde{x}_1,\widetilde{x}_2)$ be a different system of temperate coordinates at $0$, that is, there exists a temperate authomorphism $\phi : \KK \nl \widetilde{\x} \nr \lgw\KK \nl \x \nr$. Suppose that there exists a system of parameters $\widetilde{\vb}=(\widetilde{v}_1,\widetilde{v}_2)$ of $\widehat{\mathcal{O}}_{\pb}$ such that:
\[
(\widetilde{x}_1,\widetilde{x}_2) \lgm (\widetilde{v}_1,\widetilde{v}_1\widetilde{v}_2).
\]
Then $\widetilde{\vb}$ is a system of temperate coordinates, that is, there exists a temperate automorphism $\psi : \KK \nl \widetilde{\vb} \nr \lgw\KK \nl \vb \nr$.
\end{lemma}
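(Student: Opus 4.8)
The plan is to evaluate $\sigma_{\pb}^{\ast}$ on $\widetilde x_1$ and $\widetilde x_2$, once through the coordinates $\x$ and once through the coordinates $\widetilde\x$, to conclude that $\widetilde v_1$ and $\widetilde v_1\widetilde v_2$ already lie in $\KK\nl\vb\nr$; then to upgrade this to $\widetilde v_2\in\KK\nl\vb\nr$; and finally to read off $\psi$. Concretely: since $\widetilde\x$ is a temperate coordinate system, the power series expressing $\widetilde x_1,\widetilde x_2$ in terms of $\x$ lie in $\KK\nl\x\nr$. As $\sigma_{\pb}^{\ast}$ restricts to the morphism $\KK\nl\x\nr\to\KK\nl\vb\nr$ given by $(x_1,x_2)\mapsto(v_1,v_1v_2)$, Proposition~\ref{rk:Wtemp}\,\ref{prop:composition} yields
\[
\widetilde v_1=\sigma_{\pb}^{\ast}(\widetilde x_1)=\widetilde x_1(v_1,v_1v_2)\in\KK\nl\vb\nr,\qquad
\widetilde v_1\widetilde v_2=\sigma_{\pb}^{\ast}(\widetilde x_2)=\widetilde x_2(v_1,v_1v_2)\in\KK\nl\vb\nr.
\]
Moreover $\widetilde x_1,\widetilde x_2$ lie in the maximal ideal $\m_0$ of $\KK\lb\x\rb$, whose image generates the ideal of the exceptional divisor in this chart, namely $(v_1)$; hence $\widetilde v_1$ and $\widetilde v_1\widetilde v_2$ also lie in $(v_1)\KK\lb\vb\rb$.

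The one delicate point is that $\widetilde v_1$ is not a unit, so one cannot simply divide $\widetilde v_1\widetilde v_2$ by it; I would get around this with the division property Proposition~\ref{rk:Wtemp}\,\ref{rk:division}, which turns the memberships $\widetilde v_1,\widetilde v_1\widetilde v_2\in\KK\nl\vb\nr\cap(v_1)\KK\lb\vb\rb$ into factorizations $\widetilde v_1=v_1u$ and $\widetilde v_1\widetilde v_2=v_1w$ with $u,w\in\KK\nl\vb\nr$. Since $\widetilde\vb$ is a regular system of parameters, $\widetilde v_1\notin\m^2$, so $u$ is a unit of $\KK\lb\vb\rb$; in particular $u(0)\neq0$, hence $u$ is a unit of $\KK\nl\vb\nr$ by Definition~\ref{W-fam}\,\ref{axiom:unit}. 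Cancelling $v_1$ in the identity $v_1u\,\widetilde v_2=v_1w$, legitimate since $\KK\lb\vb\rb$ is a domain, gives $\widetilde v_2=u^{-1}w\in\KK\nl\vb\nr$. (Alternatively, $\widetilde v_2=\widetilde v_1\widetilde v_2/\widetilde v_1$ lies in $\Frac(\KK\nl\vb\nr)\cap\KK\lb\vb\rb$, which equals $\KK\nl\vb\nr$ by faithful flatness of completion.)

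Finally, $\widetilde v_1,\widetilde v_2\in\KK\nl\vb\nr$ and $\widetilde\vb$ is a system of parameters of $\widehat{\mathcal O}_{\pb}=\KK\lb\vb\rb$, so the $\KK$-algebra endomorphism of $\KK\lb\vb\rb$ sending each $\widetilde v_i$ to its expansion in $\vb$ is an automorphism; its components lying in $\KK\nl\vb\nr$, it is temperate by the preceding lemma, and restricting it gives the desired temperate automorphism $\psi:\KK\nl\widetilde\vb\nr\to\KK\nl\vb\nr$. The only thing requiring genuine attention is the bookkeeping between the two coordinate descriptions of the single map $\sigma_{\pb}^{\ast}$; the rest is a routine application of the recalled properties of W-families.
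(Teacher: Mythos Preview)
Your proof is correct and follows essentially the same route as the paper's: both compute $\widetilde v_1=\phi_1(v_1,v_1v_2)$ and $\widetilde v_1\widetilde v_2=\phi_2(v_1,v_1v_2)$, factor out $v_1$, and then argue that the cofactor of $\widetilde v_1$ is a unit so that $\widetilde v_2\in\KK\nl\vb\nr$. The only difference is in how the unit condition is justified: the paper expands the linear part explicitly and reads off $a_{1,1}\neq 0$ from the requirement that the quotient $\widetilde v_2$ be a genuine power series, whereas you argue more intrinsically that $\widetilde v_1\notin\m^2$ forces $u=\widetilde v_1/v_1$ to be a unit. Both are perfectly fine; your formulation via Proposition~\ref{rk:Wtemp}\,\ref{rk:division} is arguably a bit cleaner since it avoids writing out the Taylor expansion.
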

\begin{proof}
Let $\phi(\widetilde{x}_1) = \phi_1(x_1,x_2)$ and $\phi(\widetilde{x}_2) = \phi_2(x_1,x_2)$. From the assumption
\begin{equation}\label{eq:ChangeCoordinates}
\widetilde{v}_1 = \phi_1(v_1,v_1v_2), \quad \widetilde{v}_2 = \frac{\phi_2(v_1,v_1v_2)}{\phi_1(v_1,v_1v_2)},
\end{equation}
and from usual formal algebraic geometry, we know that \eqref{eq:ChangeCoordinates} defines an authomorphism of $\widehat{\mathcal{O}}_{\pb}$. Let us show that this automorphism is temperate. We consider the Taylor expansion of $\phi_1$ and $\phi_2$ in order to get:
\[
\phi_1(v_1,v_1v_2) = v_1 \left(a_{1,1} + a_{1,2}v_2 + v_1\Phi_1 \right), \quad \phi_2(v_1,v_1v_2) = v_1 \left(a_{2,1} + a_{2,2}v_2 + v_1\Phi_2 \right)  
\]
where the $\KK$-matrix $A = [a_{i,j}]$ is invertible and $\Phi_1$ and $\Phi_2$ are temperate functions by Proposition \ref{rk:Wtemp} \ref{rk:division}. Therefore:
\[
\widetilde{v}_2 = \frac{a_{2,1} + a_{2,2}v_2 + v_1\Phi_2}{a_{1,1} + a_{1,2}v_2 + v_1\Phi_1}
\]
and we conclude that $a_{1,1}\neq 0$ and $a_{2,1} =0$. The result is now immediate from the implicit function Theorem, cf. Proposition \ref{rk:Wtemp} \ref{prop:hensel}.
\end{proof}

In what follows, we will consider sequences of point blowings-up
\[
\xymatrix{ (N_r,F_r) \ar[r]^{ \quad \,\, \tg_r}  &  \cdots   \ar[r]^{ \tg_2 \quad \, \,} &  (N_1,F_1)  \ar[r]^{\tg_1 \quad \quad}& (N_0,0)= (\wdh{\mathbb A}^2_\KK,0)}
\]
and it will be convenient to fix notation. We set $\s=\s_1\circ\cdots\circ \s_r$ and, for every $j\in\{1,\ldots, r\}$, $F_j$ is a simple normal crossing divisor that can be decomposed as
$$
F_j=F_j^{(1)}\cup F_j^{(2)}\cup \cdots \cup F_j^{(j)}
$$
where $F_j^{(k)}$ is the strict transform of $F_{j-1}^{(k)}$ (when $k<j$) and $F_j^{(j)}$ is the exceptional divisor of $\s_j$. Now, fixed a temperate ring $\mathcal{O} = \KK \nl x_1,x_2\nr$ at $0$, the formal morphism $\s$ can be localized at every point $\pb \in F_r$ in order to generate a morphism between temperate rings, that is, there are system of parameters $\vb = (v_1,v_2)$ of $\widehat{\mathcal{O}}_{\pb}$ such that $\s_{\pb}^{\ast} : \KK \nl \x \nr \lgw\KK \nl \vb \nr$ is well-defined and can be written $\s_{\pb}^{\ast} : \mathcal{O}_0 \lgw\mathcal{O}_{\pb}$. 

\begin{remark}\label{rk:NiceCoordinatesF1}
If $\pb \in F_r^{(1)}$ then, from usual combinatorial considerations about blowings-up, we may further suppose that $\s_{\pb}^{\ast} : \KK \nl \x \nr \lgw \KK \nl \vb \nr$ is given by:
\[
(x_1,x_2) \lgm (v_1v_2^c, v_1 v_2^{c+1})
\]
for some natural number $0\leq c\leq r$.
\end{remark}

\subsection{Blowings-up and Projective rings}
We present in this subsection different results about the behavior of projective series and temperate projective series under blowing-up, which will be most useful in the sequel. 

\begin{definition}
Let $A\in \PP_h \lb \x\rb$ and $\s: (N_r,F_r) \lgw (N_0,0)$ a sequence of point blowings-up. We say that $A$ \emph{extends at a point $\pb \in F_r$} if $A_\pb:=\sigma_\pb^*(A)$ belongs to $\widehat{\mathcal{O}}_{\pb}$. Furthermore, we say that $A$  \emph{extends temperately} if $A_\pb\in \mathcal{O}_{\pb}$, where we recall that $\mathcal{O}_{\pb}$ stands for the ring of W-temperate functions at $\pb$.
\end{definition}

The next Lemma is a generalization of \cite[Proposition 5.13 and Lemma 5.14]{BCR} for W-temperate rings. Note that the proof given in \cite{BCR} relies in complex analysis, cf. \cite[$\S\S$5.3]{BCR}, and does not adapt in a trivial way to W-temperate rings, so we provide a new commutative algebra argument:

\begin{lemma}[{cf. \cite[Proposition 5.13 and Lemma 5.14]{BCR}}]\label{lemma:TempExt}
Let $A\in \PP_h\lb \x\rb$ and let $\s: (N_r,F_r) \lgw (N_0,0)$ be a sequence of point blowings-up. Let $\pb\in F_r^{(1)}$ be such that $A$ extends at $\pb$ (that is the case for instance when $\pb$ does not belong to the strict transform of $h=0$ or the in the intersection with $F_r^{(j)}$ for some $j>1$). Then $A\in \POx$ if and only if $A_\pb\in \mathcal{O}_{\pb}$. In particular, $\PP_h\nl \x\nr \cap \KK \lb \x \rb = \KK\nl \x \nr $.
\end{lemma}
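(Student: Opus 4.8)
The plan is to reduce the statement about abstract projective series to a concrete statement about power series in two variables that can be resolved by a single (or a few) explicit blowing-up computation, and then to invoke Definition \ref{W-fam}\,\ref{axiom:monomialcomposition} (closure by local blowings-down) together with the ramification property in Proposition \ref{rk:Wtemp}\,\ref{rk:ramif}. First I would recall that an element $A\in\PP_h\lb\x\rb$ is by definition a sum $A=\sum_{k\geq 0}a_k(\x)/h^{\a k+\b}$ with $a_k$ homogeneous of the correct degree, so $\sum_k a_k(\x)$ is an honest element of $\KK\lb\x\rb$, and the statement ``$A\in\POx$'' is precisely the statement that $\sum_k a_k(\x)\in\KK\nl\x\nr$. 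The content of the lemma is therefore the equivalence between temperateness of this formal sum and temperateness of its pullback $A_\pb=\s_\pb^*(A)$ at a single point $\pb\in F_r^{(1)}$ over which $A$ already extends (i.e. the denominators $h^{\a k+\b}$ have been cleared by the blowing-up).

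The forward direction ``$A\in\POx\Rightarrow A_\pb\in\mathcal O_\pb$'' should be essentially formal: if $\sum_k a_k\in\KK\nl\x\nr$, then along the chart at $\pb$, Remark \ref{rk:NiceCoordinatesF1} gives $\s_\pb^*$ in the monomial form $(x_1,x_2)\mapsto(v_1v_2^c,v_1v_2^{c+1})$, and one checks that the pullback of $A$ is obtained from $\sum_k a_k$ by substituting these monomials and dividing by a power of the monomial $\s_\pb^*(h)$ — each of which is an operation preserving temperateness, using Proposition \ref{rk:Wtemp}\,\ref{prop:composition} for the substitution of series into series and property \ref{rk:division} to see the division by the exceptional monomial (which divides the result because $A$ extends) stays in $\mathcal O_\pb$. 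The hard part is the converse: assuming $A_\pb\in\mathcal O_\pb$, one must deduce that the formal series $\sum_k a_k$ lies in $\KK\nl\x\nr$. Here the strategy is to reverse the blowing-up: $\sum_k a_k(\x)$ can be recovered from $A_\pb$ (up to multiplying by the exceptional monomial to an appropriate power and a unit) by the local blowing-down of Definition \ref{W-fam}\,\ref{axiom:monomialcomposition}, applied finitely many times — once for each blowing-up $\s_j$ in the sequence — together with the ramification statement Proposition \ref{rk:Wtemp}\,\ref{rk:ramif} to handle the $v_2^c$-factor appearing in Remark \ref{rk:NiceCoordinatesF1}. Concretely, the map $(x_1,x_2)\mapsto(v_1v_2^c,v_1v_2^{c+1})$ factors as the substitution $x_1=v_1v_2^c$, $x_2=x_1\cdot v_2$, i.e. as a composite of elementary blowing-down substitutions $f(\x',x_1x_n)\in\KK\nl\x\nr\Rightarrow f(\x)\in\KK\nl\x\nr$ and of a $q$-th power ramification; so a series whose pullback under this map is temperate must itself be temperate.

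The main obstacle I expect is bookkeeping: one must carefully track which monomial power of the exceptional divisor has been cleared, verify that $A$ genuinely extends at $\pb$ (the parenthetical remark about $\pb$ avoiding the strict transform of $\{h=0\}$ and the other components $F_r^{(j)}$, $j>1$, is what guarantees this), and confirm that after clearing denominators the pullback of $A$ is, up to a temperate unit, exactly the blowing-down substitution applied to $\sum_k a_k$ — no extra convergent factors sneaking in. One subtlety is that a priori $A_\pb$ is only a formal power series in $\vb$ which happens to lie in $\mathcal O_\pb$; one must be sure that the series $\sum_k a_k(\x)$ one applies blowing-down to is exactly the formal series whose $\s_\pb$-pullback is $A_\pb$ (times the appropriate exceptional monomial), which follows because blowing-up is injective on formal power series. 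Once the two-variable case is settled, the final sentence $\PP_h\nl\x\nr\cap\KK\lb\x\rb=\KK\nl\x\nr$ follows immediately: an element of $\PP_h\nl\x\nr$ that happens to lie in $\KK\lb\x\rb$ has $k_0\geq 0$ and $\sum_k a_k\in\KK\nl\x\nr$ by definition, and conversely $\KK\nl\x\nr\subset\PP_h\nl\x\nr$ trivially, with the reverse inclusion in the intersection being the nontrivial half already packaged in the equivalence just proved (taking $\s$ to be a point blowing-up and $\pb$ a generic point of $F_1^{(1)}$, or even more directly from the definition since membership in $\KK\lb\x\rb$ forces the presentation to have no denominators to clear).
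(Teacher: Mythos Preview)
Your overall strategy is right and matches the paper's: reduce to the equivalence between temperateness of $\widetilde A:=\sum_k a_k\in\KK\lb\x\rb$ and temperateness of $A_\pb$, then use closure under local blowing-down (Definition~\ref{temperate_fam}\,\ref{axiom:monomialcomposition}), ramification (Proposition~\ref{rk:Wtemp}\,\ref{rk:ramif}), and division by a coordinate (Proposition~\ref{rk:Wtemp}\,\ref{rk:division}). That is exactly what the paper does.

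There is, however, a real gap in your description of the link between $A_\pb$ and $\widetilde A$. You write that ``the pullback of $A$ is obtained from $\sum_k a_k$ by substituting these monomials and dividing by a power of the monomial $\s_\pb^*(h)$'', and for the converse that $\sum_k a_k$ is recovered from $A_\pb$ ``up to multiplying by the exceptional monomial to an appropriate power and a unit''. This is not correct as stated: the denominator $h^{\a k+\b}$ carries a power depending on $k$, so after pullback one has
\[
A_\pb=\sum_{k}(v_1v_2^c)^k\frac{a_k(1,v_2)}{h(1,v_2)^{\a k+\b}},\qquad
\widetilde A_\pb=\sum_{k}(v_1v_2^c)^{(\a k+\b)d+k}a_k(1,v_2),
\]
and these are \emph{not} related by a single division by a fixed monomial power. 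The paper's device for handling this is to introduce the auxiliary series
\(
B(\w)=\sum_k (w_1w_2^c)^k a_k(1,w_2)
\)
and then, writing $h(1,v_2)=v_2^mg(v_2)$ with $g$ a unit, to exhibit explicit monomial substitutions
\[
\widetilde A_\pb(\vb)=(v_1v_2^c)^{\b d}\,B\bigl(v_1^{\a d+1}v_2^{c\a d},v_2\bigr),\qquad
B(\w)=w_2^{m\b}g(w_2)^\b\,A_\pb\bigl(w_1w_2^{m\a}g(w_2)^\a,w_2\bigr).
\]
The point is that the $k$-dependent exponent, because it is \emph{linear} in $k$ and the $k$-th term has $v_1$-degree exactly $k$, can be absorbed into a substitution $v_1\mapsto v_1\cdot(\text{monomial in }v_2)$ rather than a division. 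Each such substitution is then undone by iterated applications of the blowing-down axiom (plus one use of ramification for the power of $v_1$), giving the chain $\widetilde A_\pb\in\OOv\Leftrightarrow B\in\OOw\Leftrightarrow A_\pb\in\OOv$; the further equivalence $\widetilde A\in\OOx\Leftrightarrow\widetilde A_\pb\in\OOv$ is the straightforward one you describe. Your ``bookkeeping'' remark is on target, but what is missing is precisely this observation that the $k$-dependent denominator becomes a monomial substitution, and the intermediate $B$ (or an equivalent) is what makes that transparent.
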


\begin{proof}
Let $A \in \PP_h\lb \x\rb$ and fix a point $\pb\in F_r^{(1)}$. By definition \ref{def:ProjRings} and the local expressions of blowings-up given in Remark \ref{rk:NiceCoordinatesF1}, we can write:
\[
A=\sum\limits_{k\geqslant k_0} \frac{a_k(\x)}{h(\x)^{\alpha k+\beta}}, \quad \text{and} \quad A_\pb=\sum\limits_{k\geqslant k_0} (v_1v_2^{c})^k\frac{a_k(1,v_2)}{h(1,v_2)^{\alpha k+\beta}}.
\]
Denote by $d$ the degree of $h$, and consider:
\[
\widetilde{A} = \sum\limits_{k\geqslant k_0} a_k(\x) , \quad \text{and} \quad \widetilde{A}_{\pb} = \sigma_{\pb}^{\ast}(\widetilde{A})  = \sum\limits_{k\geqslant k_0} (v_1v_2^{c})^{(\a k  + \b)d + k}   a_k(1,v_2). 
\]
Let us define the following auxiliary function:
\[
B(\w) :=\sum\limits_{k\geqslant k_0} (w_1w_2^{c})^k a_k(1,w_2) \in \KK\lb\\w \rb.
\]
Now, writing $h(1,v_2)=v_2^mg(v_2)$, where $g$ is a unit and $ m \in \mathbb{N}$, we have:
\[
\begin{aligned}
\widetilde{A}_{\pb}(\vb) &=  (v_1v_2^c)^{\b d} B(v_1^{\a+1}v_2^{c \a d},v_2) \\
B(\w) &=  w_2^{m\b} g(w_2)^{\b} A_\pb(w_1 w_2^{m \a} g(w_2)^\a,w_2).
\end{aligned}
\]
Since being temperate is closed by division by a coordinate, ramification and local blowing-down (see Proposition \ref{rk:Wtemp} \ref{rk:ramif}, \ref{rk:division} and Definition \ref{temperate_fam}), and $A_{\pb} \in \KK\lb \vb \rb$ by hypothesis, we conclude that
\[
\widetilde{A}_{\pb}(\vb) \in \KK \nl \vb \nr \iff B(\w) \in \KK \nl \w \nr \iff A_{\pb}(\vb) \in \KK \nl \vb \nr,
\]
finishing the proof.
\end{proof}

We conclude this subsection with a useful characterization of projective series which are not formal power series:

\begin{lemma}\label{lem:inductive}
Let $A\in\PP_h\lb\x\rb\setminus \KK\lb \x\rb$ and consider a point blowing-up $\s$ centered at the origin. There exists a point $\pb\in \s^{-1}(0)$ such that $A_{\pb}=\sigma^{\ast}_{\pb}(A)$ is not a power series, that is, $\sigma^{\ast}_{\pb}(A) \notin \widehat{\mathcal{O}}_{\pb}$.
\end{lemma}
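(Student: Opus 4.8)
The plan is to reduce the statement to an elementary computation with homogeneous rational functions, after first establishing a criterion for when a projective series is a formal power series.

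First I would prove the following: given $A=\sum_{k\geqslant k_0}\frac{a_k(\x)}{h^{\a k+\b}}\in\PP_h\lb\x\rb$, and writing $R_k:=\frac{a_k}{h^{\a k+\b}}\in\KK(\x)$ (which is homogeneous of degree $k$ and hence satisfies $\nu(R_k)=k$), one has $A\in\KK\lb\x\rb$ if and only if every $R_k$ lies in $\KK[\x]$. The non‑obvious implication runs as follows: if $A=f$ for some $f\in\KK\lb\x\rb$, decompose $f=\sum_j f_j$ into homogeneous components (with $f_j=0$ for $j<k_0$ since $\nu(f)=\nu(A)\geqslant k_0$). Inside $\wdh{V}_{\nu}$ we have $\nu\bigl(A-\sum_{k=k_0}^{j}R_k\bigr)\geqslant j+1$ and $\nu\bigl(f-\sum_{i=k_0}^{j}f_i\bigr)\geqslant j+1$, whereas a nonzero homogeneous rational function of degree $j$ has $\nu$‑value exactly $j$; an immediate induction on $j$ then forces $R_j=f_j\in\KK[\x]$ for every $j$. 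This argument is also independent of the chosen presentation of $A$, since the $f_j$ are intrinsic.

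Granting this criterion, since by hypothesis $A\notin\KK\lb\x\rb$ there is some $k$ with $R_k\notin\KK[\x]$. Reducing the fraction $a_k/h^{\a k+\b}$ to lowest terms and factoring $h$ into linear forms over the algebraically closed field $\KK$, we obtain a linear form $\ell\mid h$, say of multiplicity $e\geqslant 1$ in $h$, that divides the reduced denominator of $R_k$. Let $\pb\in\s^{-1}(0)$ be the point of the exceptional divisor lying on the strict transform of $\{\ell=0\}$. After a $\KK$‑linear (hence temperate) change of coordinates I may assume that $\ell$ is proportional to $x_2$ and that $\pb$ is the origin of the $x_1$‑chart, so that $\s_{\pb}^{\ast}\colon(x_1,x_2)\mapsto(v_1,v_1v_2)$, and then
\[
A_{\pb}=\s_{\pb}^{\ast}(A)=\sum_{k\geqslant k_0}v_1^{\,k}\,\rho_k(v_2),\qquad \rho_k(v_2)=\frac{a_k(1,v_2)}{h(1,v_2)^{\a k+\b}}.
\]

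Finally I would analyze $\rho_k$ near $v_2=0$: writing $h(1,v_2)=v_2^{e}g(v_2)$ with $g(0)\neq 0$ and $a_k(1,v_2)=v_2^{c_k}b_k(v_2)$ with $b_k(0)\neq 0$, we get $\rho_k(v_2)=v_2^{\,c_k-e(\a k+\b)}\cdot(\text{unit of }\KK\lb v_2\rb)$, and the condition that $\ell\sim x_2$ divides the reduced denominator of $R_k$ translates exactly into $c_k<e(\a k+\b)$, i.e.\ $\rho_k$ has a genuine pole at $v_2=0$. Since distinct powers of $v_1$ contribute independently to $A_{\pb}$, this forces $A_{\pb}\notin\KK\lb v_1,v_2\rb=\widehat{\mathcal{O}}_{\pb}$, which is the desired conclusion. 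The one delicate point is the membership criterion of the first paragraph and its justification inside $\wdh{V}_{\nu}$ (notably its independence of the presentation of $A$); everything after that is routine bookkeeping with linear forms and the explicit chart of the blowing‑up.
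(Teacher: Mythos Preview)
Your proof is correct and follows essentially the same approach as the paper: identify a homogeneous term $R_k$ with a nontrivial denominator, pick the point $\pb$ on the exceptional divisor where a linear factor of that denominator vanishes, and read off a pole in the $x_1$-chart. The paper simply writes $A=\sum_k a_k/b_k$ in lowest terms from the outset and says ``this expression has a pole in the term $k_0$, and we conclude easily,'' whereas you spell out the criterion $A\in\KK\lb\x\rb\iff R_k\in\KK[\x]$ for all $k$ and the multiplicity bookkeeping more carefully; this extra care is harmless and the arguments are otherwise the same.
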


\begin{proof}
Let $A\in\PP_h\lb\x\rb\setminus \KK\lb \x\rb$; from definition \ref{def:ProjRings} we may write
\[
A=\sum_{k\in\N}\frac{a_k(\x)}{b_k(\x)}
\]
where the $a_k$ and $b_k$ are homogeneous polynomials in $\KK[\x]$ such that $\deg(a_k)-\deg(b_k)=k$ and $\gcd(a_k,b_k)=1$. By hypothesis, there exists $k_0$ such that $b_{k_0}(\x)$ is not a constant polynomial. Apart from a $\KK$-linear change of coordinates in $\x$, we may furthermore suppose that $b_{k_0}(1,0) =0$. It follows that after the local blowing-up $\sigma:(x_1,x_2)\lgm (v_2,v_1v_2)$ we obtain
\[
\sigma^{\ast}_\pb(A) = \sum_{k\in\N} v_1^k \frac{a_k(1,v_2)}{b_k(1,v_2)},
\]
this expression has a pole in the term $k_0$, and we conclude easily.
\end{proof}

\subsection{Extension along the exceptional divisor}

We introduce the notion of Laurent series with support in a strongly convex cone, and we refer the reader to \cite{AI} for extra details.

\begin{definition}
Let $\om\in(\R_{>0})^n$ be a vector whose coordinates are $\Q$-linearly independent. This vector defines a total order on the set of monomials by setting
$$
\x^\a\preceq\x^\b \text{ if } \a\cdot\om\leq\b\cdot\om.
$$
Let $\Sigma$ be a strongly rational cone. We say that $\Sigma$ is $\om$-positive if  $s\cdot\om>0$ for every $s\in\Sigma\setminus\{0\}$; under this hypothesis, $\Sigma\cap\Z^n$ and $\Sigma\cap\frac1q\Z^n$ for $q\in\N^*$ are well-ordered for $\preceq$, and $(\R_{\geq 0})^\subset \Sigma$ because $\om\in(\R_{>0})^n$.

Assume that $\Sigma$ is $\om$-positive strongly rational cone. We denote by $\KK\lb \Sigma\rb$ (resp. $\KK\lb \Sigma\cap\frac1q\Z^n\rb$ for $q\in\N^*$) the set of Laurent series with support in $\Sigma\cap\Z^n$ (resp. with support in $\Sigma\cap\frac1q\Z^n$). Since $\Sigma\cap\Z^n$ and $\Sigma\cap\frac1q\Z^n$ are well-ordered for $\preceq$, they are rings containing respectively $\KK\lb \x\rb$ and $\KK\lb\x^{1/q}\rb$. These rings are commutative integral domains, and we denote by $\KK(\!(\Sigma)\!)$ and $\KK(\!(\Sigma\cap\frac1q\Z^n)\!)$ their respective fraction fields. 
\end{definition}

The next result is a generalization of \cite[Theorem 5.16]{BCR} for W-temperate rings. Once again, the proof given in \cite{BCR} relies in complex analysis, cf. \cite[$\S\S$5.4]{BCR}, so we can not adapt it in a trivial way to W-temperate ring. Instead, we provide a new commutative algebra argument, which greatly simplifies the proof:

\begin{theorem}[{cf. \cite[Theorem 5.16]{BCR}}]\label{thm:extension}
Let $P\in \KK\lb\x\rb [y]$ be a monic reduced polynomial, and let $Q$ be an irreducible factor of $P$ in some $\PP_h\lb \x\rb [y]$ for a convenient $h\in \KK[\x]$ as in Corollary \ref{cr:NewtonPuiseux}. Let $\s: (N_r,F_r) \lgw (N_0,0)$ be a sequence of point blowings-up such that $\sigma^*(\Delta_P)$ is everywhere monomial, that is, at any point $\pb$ there exist (non necessarily temperate) coordinates $\widehat{\vb}$ such that 
$$
\sigma^*(\Delta_P)=\widehat{\vb}^\a\times\unit.
$$
Then $Q$ extends at every point $\pb'\in F_r^{(1)}$.
\end{theorem}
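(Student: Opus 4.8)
The statement asserts that an irreducible factor $Q\in\PP_h\lb\x\rb[y]$ of a reduced monic $P$ extends at every point $\pb'\in F_r^{(1)}$, where $\s$ has been chosen so that $\s^*(\Delta_P)$ is everywhere monomial. The plan is to work root by root. By the Newton--Puiseux--Eisenstein factorization (Corollary \ref{cr:NewtonPuiseux}), $Q=\prod_j(y-\xi(\x,\g_{i,j}))$ where $\xi(\x,\g_{i,j})=\sum_{k=0}^{k_i}A_{i,k}(\x)\g_{i,j}^k$ and the $A_{i,k}$ lie in $\PP_h(\!(\x)\!)$. To show $Q_{\pb'}=\sigma_{\pb'}^*(Q)\in\widehat{\mathcal O}_{\pb'}$, it suffices to show that each coefficient of $Q$ (a symmetric function of the $\xi(\x,\g_{i,j})$) pulls back to a genuine power series at $\pb'$; and for that it is enough to control the pullback of each homogeneous building block $A_{i,k}(\x)/h(\x)^{\a k+\b}$ together with the pullback of $\g_{i,j}$.

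First I would reduce to the $F_r^{(1)}$-chart described in Remark \ref{rk:NiceCoordinatesF1}: at $\pb'\in F_r^{(1)}$ we have coordinates $\vb=(v_1,v_2)$ with $(x_1,x_2)\mapsto(v_1v_2^c,v_1v_2^{c+1})$, so a homogeneous polynomial $a(\x)$ of degree $m$ pulls back to $(v_1v_2^c)^m a(1,v_2)$, and an element $\sum_{k}a_k(\x)/h(\x)^{\a k+\b}$ of $\PP_h$ pulls back to $\sum_k (v_1v_2^c)^k\,a_k(1,v_2)/h(1,v_2)^{\a k+\b}$ — which lies in $\widehat{\mathcal O}_{\pb'}$ precisely when $h(1,v_2)$ is invertible, i.e. when $\pb'$ is not on the strict transform of $\{h=0\}$. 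The key input is that the discriminant $\Delta_P$ becomes monomial after $\s$; since the resultant-type data controlling where the $\g_{i,j}$ and the denominators $h$ can blow up is bounded by $\Delta_P$ (the roots $\g_{i,j}$ have "orders" read off from the Newton polygon of $P$, which in turn is governed by $\Delta_P$), monomiality of $\s^*(\Delta_P)$ forces the poles of the $A_{i,k}$ and the branching of the $\g_{i,j}$ to be concentrated on divisors other than $F_r^{(1)}$. Concretely I expect the argument to say: along $F_r^{(1)}$ the total transform of $\Delta_P$ is a unit times a power of the $F_r^{(1)}$-equation $v_1$ (since $F_r^{(1)}$ is the strict transform of the first exceptional divisor and the other components of $\s^*\Delta_P$ do not meet a generic point of $F_r^{(1)}$), so $h(1,v_2)$ is a unit in $\widehat{\mathcal O}_{\pb'}$ and the homogeneous element $\g_{i,j}$ specializes to an element of $\widehat{\mathcal O}_{\pb'}$ after the appropriate ramification in $v_1$ is absorbed by the exponent $k$.

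Next I would assemble: each $\xi(\x,\g_{i,j})$ pulls back to an element of $\widehat{\mathcal O}_{\pb'}$ (or of a finite ramified extension thereof indexed by the conjugates), the monomial factor $(v_1v_2^c)^k$ guarantees convergence/summability of the series in $k$, and the product over $j$ of the $(y-\xi(\x,\g_{i,j}))$ — being invariant under the Galois action permuting the conjugates $\g_{i,j}$ of $\g_i$ — has coefficients that descend to $\widehat{\mathcal O}_{\pb'}[y]$. This is essentially the same symmetric-function argument used in the complex-analytic case in \cite{BCR}, but now purely formal. The main obstacle will be the bookkeeping connecting monomiality of $\s^*(\Delta_P)$ at $\pb'$ to the invertibility of $h(1,v_2)$ and the non-branching of $\g_{i,j}$ along $F_r^{(1)}$: one must argue that the factorization data $h$ and $\Gamma_i$ produced by Corollary \ref{cr:NewtonPuiseux} divide (a power of) $\Delta_P$ in the appropriate sense, so that wherever $\s^*(\Delta_P)$ is a monomial times a unit, the same holds for $\s^*(h)$ and for the discriminant of $\s^*(\Gamma_i)$ — and in particular at a point of $F_r^{(1)}$ the relevant divisor is just $\{v_1=0\}$, which is harmless because the $v_1$-power is supplied by the index $k$ and absorbed. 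Once that geometric fact is in place, the rest is the routine symmetric-function and summability check sketched above.
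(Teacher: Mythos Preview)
Your plan has a genuine gap at the step you yourself flag as ``the main obstacle.'' You assert that at a point $\pb'\in F_r^{(1)}$ the pullback of $h$ is a unit, because ``the other components of $\sigma^*\Delta_P$ do not meet a generic point of $F_r^{(1)}$.'' But the theorem asks for extension at \emph{every} point of $F_r^{(1)}$, including the intersection points $F_r^{(1)}\cap F_r^{(j)}$ for $j>1$. At such a point the local chart from Remark~\ref{rk:NiceCoordinatesF1} has $c>0$, the divisor $\{v_2=0\}$ is exceptional, and $h(1,v_2)=v_2^m g(v_2)$ with $m>0$ is perfectly possible; the pullback of a coefficient $A\in\PP_h\lb\x\rb$ then has genuine negative $v_2$-powers and is \emph{not} in $\widehat{\mathcal O}_{\pb'}$ term by term. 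Your proposed fix --- that $h$ and $\mathrm{disc}(\Gamma_i)$ divide a power of $\Delta_P$ --- is not proved anywhere, is not used in the paper, and there is no reason to expect it: the homogeneous polynomial $h$ produced by Newton--Puiseux--Eisenstein encodes denominators of a particular presentation of the roots, not a factor of the discriminant.

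The paper's argument avoids this problem by a different mechanism. It does not try to make each coefficient $A_{\pb'}$ a power series; instead it observes that $A_{\pb'}$ always lies in a ring $\KK(\!(\Sigma)\!)$ of Laurent series supported on a strongly convex cone $\Sigma\subset\Z^2$ (generated by $(0,1)$ and $(1,\min\{0,c-m\alpha\})$). Thus $Q_{\pb'}$ is a monic factor of $P_{\pb'}$ in $\KK(\!(\Sigma)\!)[y]$. Now the monomiality of $\sigma^*(\Delta_P)$ is used, not to control $h$, but to apply Abhyankar--Jung \emph{directly to $P_{\pb'}$}: all roots of $P_{\pb'}$ lie in $\KK\lb v_1^{1/q},v_2^{1/q}\rb\subset\KK\lb\Sigma\cap\tfrac1q\Z^2\rb$. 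Since $\KK(\!(\Sigma\cap\tfrac1q\Z^2)\!)$ is a field containing both the roots of $P_{\pb'}$ and the coefficients of $Q_{\pb'}$, unique factorization forces the roots of $Q_{\pb'}$ to be among the Puiseux roots of $P_{\pb'}$; symmetric functions of these lie in $\KK\lb v_1^{1/q},v_2^{1/q}\rb$, and intersecting with the integer-lattice support of $A_{\pb'}$ gives $Q_{\pb'}\in\KK\lb\vb\rb[y]$. The idea you are missing is precisely this cone-Laurent-series container together with Abhyankar--Jung applied to $P_{\pb'}$ rather than any attempt to tame $h$ or the $\gamma_{i,j}$ individually.
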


\begin{proof}
Let $\pb\in F_r^{(1)}$. From Remark \ref{rk:NiceCoordinatesF1}, there are coordinates $\widehat{\vb} = (\widehat{v}_1,\widehat{v}_2)$ centered at $\pb'$ and $c\in\N$ such that
$$
(x_1,x_2)=(\widehat{v}_1\widehat{v}_2^c,\widehat{v}_1\widehat{v}_2^{c+1}).
$$
Let $A$ be a coefficient of $Q$. By definition \ref{def:ProjRings}, and by writing $h(1,\widehat{v}_2)=\widehat{v}_2^mg(\widehat{v}_2)$ where $g$ is a unit and $m \in \mathbb{N}$, we have
\begin{equation}\label{supp1}
A=\sum_{k\in\N}\frac{a_k(\x)}{h(\x)^{\a k+\b}} \quad \text{so that} \quad  A_\pb= \widehat{v}_2^{-m\b} \sum_{k\in\N}\widehat{v}_1^k\widehat{v}_2^{k(c-m\a)}\frac{a_k(1,\widehat{v}_2)}{g(1,\widehat{v}_2)^{\a k+\b}}.
\end{equation}
Note that the series $\widehat{v}_2^{m\b}A_\pb$ has support in a translation of the strongly convex cone $\Sigma$ generated by the vectors $(0,1)$ and $(1, \min \{0,c-m\a\})$, thus $A_\pb$ belongs to $\KK(\!( \Sigma)\!)$. We conclude that $Q_{\pb}=\s_\pb^*(Q)$ is a factor of $P_{\pb}=\s_\pb^*(P)$ in $\KK(\!(\Sigma)\!)[y]$.

Now, by the Abhyankar-Jung Theorem for formal power series, the roots of $P_{\pb}$ can be written as Puiseux power series in $\KK\lb \widehat{v}_1^{1/q},\widehat{v}_2^{1/q}\rb\subset \mathcal \KK\lb\Sigma\cap\frac1q\Z^2\rb$ for some $q\in\N^*$. Since $\KK(\!(\Sigma\cap\frac1q\Z^n)\!)$ is a field, we conclude that $Q_{\pb}$ splits in $\KK(\!(\Sigma\cap\frac1q\Z^n)\!)[y]$ and its roots are in $\KK\lb \widehat{v}_1^{1/q},\widehat{v}_2^{1/q}\rb$. By \eqref{supp1}, we conclude that $Q_{\pb}\in\KK\lb \widehat{\vb} \rb$.
\end{proof}

We are ready to prove the main result of this subsection, which generalizes \cite[Theorem 5.18]{BCR} for W-temperate rings. We highlight that this is the only point where Definition \ref{W-fam} \ref{axiom:last} intervenes:

\begin{theorem}[{cf. \cite[Theorem 5.18]{BCR}}]\label{local_to_semiglobal}
Let $P\in\KK\lb x_1,x_2\rb[y]$ be a monic reduced polynomial, and $h$ be a homogeneous polynomial for which Theorem \ref{thm:NewtonPuiseux} is satisfied. Let $\s: (N_r,F_r) \lgw(N_0,0)$ be a sequence of point blowings-up. Suppose:
\begin{itemize}
\item   At every point $\pb\in F_r^{(1)}$, the pulled-back discriminant $ \s_\pb^*(\Delta_P)$ is monomial;
\item There exists $\pb_0\in F_r^{(1)}$ such that $P_{\pb_0}=\s_{\pb_0}^*(P)$ admits a factor in $\mathcal{O}_{\pb_0}$.
\end{itemize}
Then $P$ admits a non-constant factor $Q\in\POx[y]$, such that either $P/Q$ is constant, or $\s_\pb^*(P/Q)$ admits no non-constant temperate factor for all $\pb\in F^{(1)}_r$.
\end{theorem}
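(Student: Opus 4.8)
The statement asserts we can extract from $P$ a temperate factor $Q$ that captures \emph{all} the temperate behavior along $F_r^{(1)}$. I would proceed by starting from the Newton-Puiseux-Eisenstein factorization of Corollary \ref{cr:NewtonPuiseux}: write $P = \prod_{i=1}^s Q_i$ with $Q_i \in \PP_h\lb \x\rb[y]$ irreducible in $\wdh V_\nu[y]$, each $Q_i = \prod_{j=1}^{r_i}(y - \xi_i(\x,\g_{i,j}))$ where the $\g_{i,j}$ run over the conjugates of an integral homogeneous element $\g_i$. The key point is to define $Q$ as the product of those $Q_i$ which extend temperately at the point $\pb_0$, i.e. such that $(Q_i)_{\pb_0} \in \mathcal{O}_{\pb_0}[y]$, and to show this $Q$ does the job. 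Since $P_{\pb_0}$ admits a temperate factor by hypothesis, $Q$ will be non-constant; the remaining content is that $Q \in \POx[y]$ and that $P/Q = \prod_{Q_i \notin \text{(temperate at $\pb_0$)}} Q_i$ has no temperate factor anywhere along $F_r^{(1)}$.

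\textbf{Step 1: descent of temperateness along blowings-up.} First I would observe, using Theorem \ref{thm:extension}, that every $Q_i$ extends (as a formal power series factor) at every point $\pb' \in F_r^{(1)}$, since $\s^*(\Delta_P)$ is monomial there by hypothesis. So the coefficients of each $(Q_i)_{\pb'}$ lie in $\widehat{\mathcal{O}}_{\pb'}$. Then I would invoke Lemma \ref{lemma:TempExt}: a projective series $A \in \PP_h\lb \x\rb$ that extends at a point $\pb' \in F_r^{(1)}$ lies in $\POx$ if and only if $A_{\pb'} \in \mathcal{O}_{\pb'}$. Applied coefficient by coefficient to $Q_i$, this shows that the property ``$(Q_i)_{\pb'} \in \mathcal{O}_{\pb'}[y]$'' is \emph{independent of the point} $\pb' \in F_r^{(1)}$ at which we evaluate, and is equivalent to $Q_i \in \POx[y]$. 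In particular $Q \in \POx[y]$, hence (by the last assertion of Lemma \ref{lemma:TempExt}, $\PP_h\nl\x\nr \cap \KK\lb\x\rb = \KK\nl\x\nr$, applied to the coefficients of the honest polynomial $Q \in \KK\lb\x\rb[y]$) in fact $Q \in \OOx[y]$ as desired. Symmetrically, for any $Q_i$ dividing $P/Q$, $(Q_i)_{\pb'} \notin \mathcal{O}_{\pb'}[y]$ for every $\pb' \in F_r^{(1)}$.

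\textbf{Step 2: ruling out hidden temperate factors of $P/Q$.} The subtle part is that $\s_{\pb}^*(P/Q)$ could conceivably acquire a temperate factor which is \emph{not} the pullback of one of the $Q_i$ — the irreducible factorization over $\KK\lb\widehat{\vb}\rb$ can be finer than the pullback of the factorization over $\wdh V_\nu$. This is where Definition \ref{W-fam}\,\ref{axiom:last} (temperateness) enters. The mechanism: if $\s_{\pb}^*(Q_i)$ had a temperate factor at some $\pb \in F_r^{(1)}$, then among the roots $\xi_i(\x,\g_{i,j})$ — which are pulled back to honest Puiseux series in $\KK\lb\widehat{v}_1^{1/q},\widehat{v}_2^{1/q}\rb$ as in the proof of Theorem \ref{thm:extension} — some subset would be temperate after pullback. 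But the roots are obtained from one another by replacing the homogeneous element $\g_{i,j}$ by a conjugate $\g_{i,j'}$ (item (iii) of Corollary \ref{cr:NewtonPuiseux}), and the coefficients $A_{i,k}(\x) \in \PP_h(\!(\x)\!)$ are built from polynomials satisfying exactly the degree bound $\deg_{x_2}(p_k) \leq \alpha k$ appearing in the temperateness axiom, once one passes to the chart $(x_1,x_2) = (\widehat v_1 \widehat v_2^{c}, \widehat v_1\widehat v_2^{c+1})$ and collects powers of $\widehat v_1$. Thus the temperateness axiom, applied with $\G = \G_i$ the minimal polynomial of $\g_i$ and with $\g = \g_{i,j}$, $\g' = \g_{i,j'}$, forces: if one root $\xi_i(\x,\g_{i,j})$ extends temperately at $\pb$, then so do all its conjugates, hence the whole $\s_\pb^*(Q_i)$ is temperate at $\pb$ — which by Step 1 means $Q_i \in \OOx[y]$, i.e. $Q_i$ is one of the factors of $Q$, contradiction. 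Consequently $\s_\pb^*(P/Q)$ is a product of the $\s_\pb^*(Q_i)$ none of which has a non-constant temperate factor; since a temperate factor of the product would (over the UFD $\KK\lb\widehat{\vb}\rb$, using that the $Q_i$ are coprime as $P$ is reduced) split as a product of temperate factors of the individual $\s_\pb^*(Q_i)$, we conclude $\s_\pb^*(P/Q)$ has no non-constant temperate factor, for all $\pb \in F_r^{(1)}$.

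\textbf{Main obstacle.} The delicate step is Step 2: transcribing the combinatorics of the Newton-Puiseux-Eisenstein coefficients $A_{i,k}(\x)/h^{\a k+\b}$ into precisely the hypothesis of the temperateness axiom — in particular verifying the degree bound $\deg_{x_2} p_k \le \alpha k$ survives the change of chart and the clearing of the homogeneous denominator $h$, and that the relevant $\G_i$ indeed splits in $\KK\nl t\nr[z]$ after the blowing-up so that the axiom applies. Controlling how conjugation $\g_{i,j} \mapsto \g_{i,j'}$ interacts with the single-variable restriction $x_1 \mapsto$ the exceptional parameter, so that one genuinely lands in the two-variable setting $\x=(x_1,x_2)$ of Definition \ref{temperate_fam}\,\ref{axiom:last}, is the technical heart of the argument; everything else is bookkeeping with Lemma \ref{lemma:TempExt} and Theorem \ref{thm:extension}.
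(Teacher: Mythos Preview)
Your approach is essentially the paper's: factor $P=\prod Q_i$ via Corollary~\ref{cr:NewtonPuiseux}, use Theorem~\ref{thm:extension} + Lemma~\ref{lemma:TempExt} to propagate temperateness along $F_r^{(1)}$, and invoke the temperateness axiom to show that a temperate factor of $\s_\pb^*(Q_i)$ forces the whole $\s_\pb^*(Q_i)$ to be temperate. Two small points to tighten. First, your Step~1 claim that ``$Q$ will be non-constant'' is not yet justified there: the hypothesis only gives a temperate factor of $P_{\pb_0}$, which a priori could be a \emph{proper} factor of some $(Q_{i_0})_{\pb_0}$ without any $(Q_i)_{\pb_0}$ being entirely temperate; you need the Step~2 mechanism (applied at $\pb_0$) first, together with Corollary~\ref{cor:Artin} to pass from ``common formal factor with a temperate polynomial'' to ``common temperate factor'', exactly as the paper does. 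Second, your assertion that $Q\in\OOx[y]$ is unwarranted: a product of \emph{some} of the $Q_i$ need not have coefficients in $\KK\lb\x\rb$, so Lemma~\ref{lemma:TempExt}'s last clause does not apply; fortunately the theorem only asks for $Q\in\POx[y]$, which you do have. Your ``Main obstacle'' paragraph correctly identifies where the real work lies; the paper carries this out by reducing $\Gamma_i$ to a one-variable polynomial $\ovl\Gamma(v_2,z)$ after the blow-up, splitting its roots into an orbit under $e$-th roots of unity (handled by a direct symmetry) and an orbit under Galois conjugation of an irreducible factor (handled by axiom~\ref{axiom:last} of Definition~\ref{temperate_fam} via an auxiliary series $B(\w,z)$ with the required degree bound).
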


\begin{proof}
Consider the factorization $P=\prod_{i=1}^s Q_i$ given in Corollary \ref{cr:NewtonPuiseux}, where the $Q_i$ belong to some $\PP_h\lb \x\rb [y]$. It follows from Remark \ref{rk:NiceCoordinatesF1} that there exists temperate coordinates $\vb =(v_1,v_2)$ centered at $\pb_0$ such that $\s_{\pb_0}^{\ast}$ is locally given by 
\(
(x_1,x_2) \mapsto (v_1v_2^c,v_1v_2^{c+1}),
\) 
so that we get:
\[
P_{\pb_0}=\prod_{i=1}^s \s_{\pb_0}^*(Q_i)
\]
where the $\s_{\pb_0}^*(Q_i) \in \KK \lb \vb\rb [y]$ have formal power series coefficients according to Theorem \ref{thm:extension}.  Moreover, since $\KK\lb\x\rb$ is a UFD,  for some $i_0$, $\s_{\pb_0}^*(Q_{i_0})$ has a common factor with a polynomial in $\OOv[y]$. Therefore, by Corollary \ref{cor:Artin}, $\s_{\pb_0}^*(Q_{i_0})$ has a non trivial divisor $R\in \OOv[y]$ that is monic in $y$.

We claim that $\s_{\pb_0}^*(Q_{i_0})$ has its coefficients in $\OOv$. Note that the Theorem immediately follows from the Claim applied to every polynomial $Q_i$ having a temperate factor at some point of $F_r^{(1)}$. Let us prove the Claim. For simplicity we denote $Q_{i_0}$ by $Q$, and $\s_{\pb_0}^*(Q_{i_0})$ by $Q_{\pb_0}$. Now, we may suppose without loss of generality that the discriminant of $P$ is monomial in respect to the temperate coordinate system $(v_1,v_2)$. Indeed, up to making a blowing-up with center $\pb_0$, we may suppose that the discriminant of $P$ is monomial in respect to the temperate coordinate system $(v_1,v_2)$ by considering, for example, the point $\pc_0 = F_{r+1}^{(1)} \cap F_{r+1}^{(r+1)}$; we note that if we show that $\sigma_{\pc_0}^{\ast}(Q)$ is W-temperate, then so is $Q_{\pb_0}$ by Def. \ref{temperate_fam} \ref{axiom:monomialcomposition}.

At the one hand, we may apply Abhyankhar-Jung Theorem for formal power series in order to show that $Q_{\pb_0}$ splits in $\KK\lb \vb^{1/q}\rb$ for some $q\in\N$, that is
\(
Q_{\pb_0} = \prod (y - \psi_j)
\) where $\psi_j \in \KK\lb \vb^{1/q}\rb$. Furthermore, we may apply the temperate Abhyankhar-Jung Theorem \ref{A-J} to the temperate factor of $Q_{\pb_0}$, in order to conclude that one of these roots is temperate, say, $\psi_1 \in \KK\nl \vb^{1/q}\nr$. At the other hand, by Theorem \ref{thm:NewtonPuiseux}, the roots of $Q$ belong to a ring $\PP_h\lb\x,\g_1\rb$, where $\g_1$ is an integral homogeneous element.  Let $\G(\x,z)\in\KK[\x,z]$ be the irreducible $\omega$-weighted homogeneous polynomial having $\g_1$ as a root (see Definition \ref{homogeneous_element}) and that is monic in $z$. By Corollary \ref{cr:NewtonPuiseux}, the roots of $Q$ are given by $\xi(\x,\g)$ where $\g$ runs over the roots of $\G(\x,z)$, that is, $Q = \prod(y-\xi(\x,\g'))$ where the product is taken over every root $\g'$ of $\G$. In what follows, we perform a detailed study on how roots of $Q$ in $\PP_h\lb\x,\g_1\rb[y]$ transform by the blowing-up in order to compare them to the temperate root of $Q_{\pb_0}$ in $\KK \lb \vb^{1/q} \rb $.

We start by describing how the roots $\gamma'$ of $\G$ transform by $\s$. Consider
\[
\G(\x,z)=z^d+\sum_{i=1}^df_i(\x)z^{d-i}
\]
where the $f_i(\x)$ are homogeneous polynomials of degree $\om i$. Since $\KK$ is algebraically closed, we may suppose that $\om>0$ (otherwise $Q$ is a degree one polynomial, and the Claim is trivial), that is, $\G(\x,z)$ is a Weierstrass polynomial in $z$. We write $\om=p/e$ with $\gcd(p,e)=1$, and we note that $f_i=0$ if $e$ does not divide $i$. Furthermore, because $\G$ is irreducible, $f_d\neq0$, hence, $e$ divides $d$. We have
\[
\begin{aligned}
\G(v_1v_2^c,v_1v_2^{c+1},v_1^\om z)&=z^d+\sum_{i=1}^df_i(v_1v_2^c,v_1v_2^{c+1})(v_1^\om z)^{d-i}\\
&=v_1^{d\om}\left(z^d+\sum_{j=1}^{d/e}v_2^{cpj}f_{ej}(1,v_2)z^{d-ej}\right)
\end{aligned}
\]
and we set
\[
\ovl \G(v_2,z)=z^d+\sum_{j=1}^{d/e}v_2^{cpj}f_{ej}(1,v_2)z^{d-ej}\in\KK[v_2,z^e]\subset\KK[v_2,z].
\]
Note that $\wdt\g$ is a root of $\sigma^{\ast}(\G) = \G(v_1v_2^c,v_1v_2^{c+1}, z)$ if and only if $\wdt\g=v_1^\om\ovl\g$ where $\ovl\g$ is a root of $\ovl\G(v_2,z)$. Now, let us remark that $\ovl\G$ is irreducible in $\KK[v_2,z^e]$. Indeed, if $\ovl \G=\ovl\G_1\ovl\G_2$ where $\ovl \G_i\in\KK[v_2,z^e]$ have positive degree $\ell_i$ in $z^e$, we set
$\G_i'(v_1,v_2,z^e):=v_1^{\ell_ie\om}\ovl \G_i$ for $i=1$, $2$.
 Then we would have
$$
\G(\x,z)=\ovl \G_1\left(\frac{x_1^{c+1}}{y^c},\frac{x_2}{x_1},\frac{x_2^{cp}}{x_1^{(c+1)p}}z^e\right)\ovl \G_2\left(\frac{x_1^{c+1}}{y^c},\frac{x_2}{x_1},\frac{x_2^{cp}}{x_1^{(c+1)p}}z^e\right)
$$
contradicting the irreducibility of $\G(\x,z)$. In particular, this implies that the irreducible factors of $\ovl\G(v_2,z)$ are conjugates up to multiplication of $z$ by a $e$-th root of unity. This means that we may write
$$
\ovl\G(v_2,z)=\prod\ovl \G_{\eta}(v_2,z)
$$
where $\eta$ runs through a subgroup $H$ of the group of the $e$-th root of unity and the $\ovl\G_{\eta}(v_2,z)$ are irreducible (monic in $z$) polynomials, such that
$$
\ovl\G_{\eta}(v_2,z)=\ovl\G_1(v_2,\eta z).
$$
It follows that we may parametrize all roots of $\ovl\G$ by $\ovl\g_{i,\eta}$ for $1=1,\ldots,d/e'$ and $\eta \in H$, where $e' = |H|$ and $\ovl\g_{i,\eta} = \eta \cdot \ovl\g_{i,1}$. We may index the roots of $\G$, therefore, by $\g_{i,\eta}$ in such a way that $\sigma^{\ast}_{\pb_0}(\g_{i,\eta}) = \wdt\g_{i,\eta} = v_1^{\omega} \ovl\g_{i,\eta}$ are the roots of $\sigma^{\ast}(\G) = \G(v_1v_2^c,v_1v_2^{c+1}, z)$. We fix the convention that $\fract{\sigma^{\ast}_{\pb_0}(\g_1)}{ v_1^\om}=\ovl\g_{1,1}$ and, more generally, that $\fract{\sigma^{\ast}_{\pb_0}(\g_i)}{v_1^\om}=\ovl \g_{i,1}$ are all the roots of $\ovl \G_1$. Next, by Newton-Puiseux Theorem, we can write the roots of $\ovl\G(v_2,z)$ as Puiseux series in $\KK\langle v_2^{1/q}\rangle$, even if it means replacing $q$ by a larger integer. 

Now, we use the normal form given by Definition \ref{def:ProjRings}, in order to write
\[
\xi(\x,\g_{i,\eta})=\sum_{j=0}^{d-1}A_j(\x)\g_{i,\eta}^j \text{ with } A_j(\x)=\sum_{k\geq k_j}\frac{a_{k,j}(\x)}{h^{\a_jk+\b_j}(\x)}
\]
where the $a_{k,j}(\x)$ are homogeneous polynomials. Since there are only  finitely many  $j$, apart from multiplying the numerators and the denominators of the coefficients of $A_j(\x)$
by a power of $h(\x)$, we may assume that the $\a_j$ (resp. the $\b_j$) are all independent of $j$ and equal to some integer $\a$ (resp. $\b$). Note that
\begin{equation}\label{key_dev}\begin{split}
 \s_{\pb_0}^*(\xi(\x,\g_{i,\eta}))
 =\sum_{j=0}^{d-1}\s_{\pb_0}^*(A_j(\x))\wdt\g_{i,\eta}^j&=\sum_{j=0}^{d-1} \ovl \g_{i,\eta}^j\sum_{k\geq k_j} v_1^{k+\om j} v_2^{ck}\frac{a_{k,j}(1,v_2)}{h^{\a k+\b}(1,v_2)}\\ 
 &=\sum_{k\in\frac1e\N}\frac{v_1^k}{h(1,v_2)^{\a k+\b}}b_k(v_2,\ovl\g_{i,\eta})
\end{split}\end{equation}
where  $b_k\in \KK[v_2,z]$ with $\deg_z(b_k)\leq d-1$. We remark that $\deg_{v_2}(b_k)$ is bounded by a linear function in $k$ because, for each $j$, $\deg_\x(a_{k,j}(\x))$ is bounded by a linear function in $k$. 

We note that we can write $h(1,v_2)=v_2^mg(v_2)$ for some unit $g(v_2)$, and some $m\in \N$. Therefore, as already shown in the proof of Theorem \ref{thm:extension}, the series $v_2^{m\b}(A_j)_\pb$ are Laurent series with support in the  strongly convex cone $\Sigma$ generated by the vectors $(0,1)$ and $(1,\min\{0,c-m\a\}$. Therefore, if we identify the $\ovl \g_{i,\eta}$ with their expansions as  Puiseux series of $\KK\langle v_1^{1/q}\rangle$, we have that 
$$
v_2^{m\b}\sigma_{\pb_0}(\xi(\x,\g_{i,\eta})) \in \KK \left\llbracket \Sigma \cap \frac{1}{eq}\Z^2  \right\rrbracket .
$$
Since $\KK\nl \vb \nr  \subset \KK\lb \Sigma \cap \frac{1}{eq}\Z^2 \rb  \subset \KK (\!( \Sigma \cap \frac{1}{eq}\Z^2 )\!)$, and $\KK (\!( \Sigma \cap \frac{1}{eq}\Z^2 )\!)[y]$ is a UFD, we conclude that the set of roots $\s_{\pb_0}^*(\xi(\x,\g_{i,\eta}))$ and $\psi_j$ of $Q_{\pb_0}$ must coincide when we expand the $\ovl\g_{i,\eta}$ as Puiseux series. From now, the $\ovl\g_{i,\eta}\in\KK\langle v_2^{1/q}\rangle$. We set $\psi_{i,\eta} = \s_{\pb_0}^*(\xi(\x,\g_{i,\eta}))$; note that $\psi_{i,\eta} \in \KK \lb v_1^{1/e},v_2^{1/q} \rb$ for every $i$ and $\eta$ and, apart from re-indexing, we have $\psi_{1,1} \in \KK \nl v_1^{1/e},v_2^{1/q} \nr$.

Next, note that for every $e$-th root of unity $\eta$, there exists a $e$-th root of unity $\widetilde{\eta}$ such that $ \widetilde{\eta}^p = \eta$ since $\gcd(e,p)=1$, so that:
\begin{align*}
\psi_{1,\eta}(v_1^{1/e},v_2)& = \wdh\s_{\pb_0}^*(\xi(\x,\eta\, \g_1))=\sum_{j=0}^{d-1} (\eta\,\ovl \g_1)^j\sum_{k\geq k_j} v_1^{k+\om j} v_2^{ck}\frac{a_{k,j}(1,v_2)}{h^{\a k+\b}(1,v_2)}\\ 
 &=\sum_{j=0}^{d-1} \ovl \g_1^j\sum_{k\geq k_j}  (\widetilde{\eta}\, v_1^{1/e})^{ek+ p j} v_2^{ck}\frac{a_{k,j}(1,v_2)}{h^{\a k+\b}(1,v_2)} = \psi_{1,1}(\widetilde{\eta} \, v_1^{1/e},v_2),
\end{align*}
so that $\psi_{1,\eta} \in \KK\nl v_1^{1/e},v_2\nr $ for every $e$-th root of unity $\eta$. More generally, this argument shows that:
\[
\forall i,\qquad \psi_{i,1} \in \KK\nl v_1^{1/e},v_2\nr  \implies \psi_{i,\eta} \in \KK\nl v_1^{1/e},v_2\nr,
\]
for every $\eta\in H$. We are, therefore, reduced to show that $\psi_{i,1} = \s_{\pb_0}^*(\xi(\x,\g_i)) \in \KK\nl v_1^{1/e},v_2^{1/q}\nr$ for all $i = 1,\ldots,d/e'$, where we recall that the $\ovl\g_i$ are the roots of the irreducible polynomial $\ovl{\G}_1$. 
Now, we introduce the auxiliary function
\begin{equation}\label{fct-B}
B(\w,z):=\sum\limits_{k\in \frac1e\N} w_1^{ek} b_{k}(w_2^{q},z)\in\KK\lb w_1,w_2^q\rb[z].
\end{equation}
where $\deg_{w_2}(b_{k})$ is bounded by a linear function in $k$. Since $\psi_{1,1}(\vb) \in \KK\nl v_1^{1/e},v_2\nr$, \eqref{key_dev} and \eqref{fct-B} imply that
\[
B(\w, \ovl\g_1(w_2)) = w_2^{\beta m q} g(w_2^q)^{\beta} \cdot \psi_{1,1}(w_1^ew_2^{q m \alpha}g(w_2^q)^{\alpha},w_2^q)\in \KK\nl w_1,w_2^q\nr.
\]
Moreover, because $B(\w, z) \in \KK\lb w_1,w_2^q\rb[z] $ and $B(w_1,\zeta w_2, \ovl\g_1( \zeta w_2)) \in \KK\nl w_1,w_2^q\nr $, we have that $B(\w, \ovl\g_1( \zeta w_2)) \in \KK\nl w_1,w_2^q\nr$ for every $q$-th root of  unity $\zeta$. We remark that $\ovl\G_1(w_2^q,z)$ may factor as a product of monic polynomials that are conjugated under the action of a subgroup $G$ of the $q$-th roots of unity. Thus,  the set $\{\ovl\g_1(\zeta w_2)\mid \zeta\in G\}$ contains exactly one root of every factor of $\ovl\G_1(w_2^q,z)$.
Therefore,  by definition \ref{temperate_fam} \ref{axiom:last} (and we highlight that this is the only point of the paper where Definition \ref{temperate_fam} \ref{axiom:last} intervenes), we conclude that:
\[
B(\w, \ovl\g_i(w_2)) \in \KK\nl w_1,w_2\nr
\]
for every $\ovl \g_i$ which is a root of $\ovl \Gamma_1$. Now, note that:
\[
B(\w, \ovl\g_i(w_2)) = w_2^{\beta m q } g(w_2^q)^{\beta} \cdot \psi_{i,1}(w_1^ew_2^{q m \alpha}g(w_2^q)^{\alpha},w_2^q)\in \KK\nl w_1,w_2\nr
\]
for every $i=1,\ldots,d/e'$. Since we also know that $\psi_{i,1} \in \KK \lb v_1^{1/e},v_2^{1/q} \rb$, we conclude from the fact that being temperate is closed under division, ramification and local blowings-up, see Proposition \ref{rk:Wtemp} \ref{rk:ramif}, \ref{rk:division} and Definition \ref{temperate_fam} \ref{axiom:monomialcomposition}, that $\psi_{i,1}(\vb) \in \KK\nl v_1^{1/e},v_2\nr$, finishing the proof.
\end{proof}

\subsection{Proof of Theorem \ref{thm:red2}}\label{ssec:red2}
Let $P$ be a Weierstrass polynomial in $y$ as in the statement of Theorem \ref{thm:red2}. Since $\Ker(\wdh\phi)$ is a prime ideal, $P$ is irreducible, so it is a reduced polynomial. In particular the discriminant of $P$ is a formal curve $\Delta(P)$. By resolution of singularities, there exists a sequence of point blowings-up
\[
\xymatrix{ (N_r,F_r) \ar[r]^{ \quad \,\, \tg_r}  &  \cdots   \ar[r]^{ \tg_2 \quad \, \,} &  (N_1,F_1)  \ar[r]^{\tg_1 \quad \quad}& (N_0,0)= (\wdh{\mathbb A}^2_\KK,0)}
\]
such that the discriminant of $P_{\pb} = \sigma^{\ast}_{\pb}(P)$ is everywhere monomial; we set $\s = \s_1 \circ \cdots \circ \s_r$. Apart from blowing-up the origin once, we can always suppose that the sequence of blowings-up has at least one blowing-up, that is, $r\geq 1$. In particular, the blowing-up $\s_1: (N_1,F_1) \lgw (N_0,0)$ is always defined. Now, there exists a point $\pb \in F_1$ and a temperate coordinate system $\vb = (v_1,v_2)$ where $(\s_1)_{\pb}^{\ast}$ is given by $(x_1,x_2) = (v_1,v_1v_2)$. It follows from the expressions of $\phi$ and $P$ given in the statement of Theorem \ref{thm:red2}, that:
\[
(\s_{\pb})_1^*(P)=P(v_1,v_1v_2,y), \quad \widetilde{\phi} (v_1,v_2) = (v_1,v_2,\psi(\vb)),
\]
are such that $\widetilde{\phi} \circ \s =\phi$; moreover, since $P\in \Ker(\phi)$, $(\s_{\pb})_1^*(P)$ is divisible by $y-\psi(v_1,v_2)\in \OOv[y]$ and, therefore, admits a temperate factor. We conclude that there exists a point $\pb_0 \in F_r$ where $P_{\pb_0} = \sigma^{\ast}_{\pb_0}(P)$ admits a temperate factor. In order to finish the proof, it is enough to prove the following result:

\begin{proposition}[{cf. \cite[Proposition 4.6]{BCR}}]\label{prop:inductivescheme}
Let $P \in \KK\lb\x\rb [y]$, and let $\s: (N_r,F_r) \lgw (N_0,0)$ be a sequence of point blowings-up such that the discriminant of $P\circ \s$ is everywhere monomial. Let $\pb\in F_r^{(k)}$ be such that $P_{\pb} = \s^*_\pb(P)$ has a temperate factor. Then $P$ has a non-constant temperate factor.
\end{proposition}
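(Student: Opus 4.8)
The plan is to argue by induction on the number $r$ of blowings-up. The base case $r=0$ is trivial: then $\pb$ is the origin, $\sigma_{\pb}^*$ is the identity, and $P=P_{\pb}$ already has a temperate factor. For the inductive step, suppose the statement holds for sequences of length $r-1$ and write $\sigma = \sigma' \circ \sigma_1$, where $\sigma_1 : (N_1,F_1) \lgw (N_0,0)$ is the first blowing-up and $\sigma' : (N_r,F_r) \lgw (N_1,F_1)$ is the remaining sequence of $r-1$ blowings-up. Let $\pb_1 \in F_1$ be the image of $\pb$ under $\sigma'$. Applying the inductive hypothesis to $\sigma'$ and the polynomial $(\sigma_1)_{\pb_1}^*(P)$ (whose discriminant is still everywhere monomial, as monomiality is preserved under further blowings-up), we obtain that $(\sigma_1)_{\pb_1}^*(P)$ has a non-constant temperate factor at $\pb_1$. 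So it suffices to treat the case of a single blowing-up $\sigma = \sigma_1$.

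For the single blowing-up case, there are two possibilities for the point $\pb_1 \in F_1$. If $\pb_1 \in F_1^{(1)}$ (the exceptional divisor, which here is all of $F_1$ since this is the first blowing-up), then I would invoke Theorem \ref{local_to_semiglobal}: after first applying resolution of singularities to make the pulled-back discriminant $\sigma_{\pb}^*(\Delta_P)$ monomial at every point of $F_r^{(1)}$ (which is part of the hypothesis), the hypothesis that $P_{\pb_1}$ admits a temperate factor is exactly the second bullet of that theorem, and we conclude that $P$ admits a non-constant factor $Q \in \POx[y]$. Since $Q$ is a genuine power-series polynomial factor of $P$ of positive degree, this is the desired non-constant temperate factor. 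The point $\pb_1$ always lies on the exceptional divisor after the first blowing-up, so in fact only this case occurs — but one must be careful that Theorem \ref{local_to_semiglobal} requires the discriminant to be monomial at \emph{every} point of $F_r^{(1)}$, which is guaranteed by the standing hypothesis that the discriminant of $P \circ \sigma$ is everywhere monomial.

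The main obstacle I anticipate is \emph{bookkeeping the reduction to the exceptional divisor correctly}: one needs to verify that when $\pb \in F_r^{(k)}$ with $k > 1$ (i.e., $\pb$ lies on a strict transform of an earlier exceptional divisor rather than the last one), the inductive scheme still applies — here one uses that $F_r^{(k)}$ is the strict transform of $F_{k}^{(k)}$, which \emph{was} the exceptional divisor of the $k$-th blowing-up, so tracing back through $\sigma_{k+1}, \ldots, \sigma_r$ one lands at a point of $F_k^{(k)}$, the exceptional locus of $\sigma_k$, and the argument of the previous paragraph applies at that stage. Concretely, the cleanest way to organize this is: let $k$ be as in the hypothesis, set $\pb' = (\sigma_{k+1} \circ \cdots \circ \sigma_r)(\pb) \in F_k^{(k)}$, apply the inductive hypothesis to the length-$(r-k)$ tail to deduce that $P_{\pb'}$ (a factor of the pullback of $P$ under $\sigma_1 \circ \cdots \circ \sigma_k$) has a temperate factor at $\pb'$, and then apply Theorem \ref{local_to_semiglobal} to the length-$k$ head $\sigma_1 \circ \cdots \circ \sigma_k$, whose last exceptional divisor contains $\pb'$. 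One must check that the discriminant hypothesis is inherited by each sub-sequence, which follows because the discriminant of $P$ pulled back by the full sequence is monomial, hence so is its pullback by any initial segment at the relevant points. The rest is routine: a non-constant factor $Q \in \POx[y]$ of $P$ produced by Theorem \ref{local_to_semiglobal} is, by definition, a Weierstrass-polynomial factor with temperate coefficients, which is precisely the conclusion sought.
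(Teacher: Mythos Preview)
Your proposal has a genuine gap: you conflate $\POx$ with $\KK\nl\x\nr$. Theorem~\ref{local_to_semiglobal} produces a factor $Q\in\POx[y]=\PP_h\nl\x\nr[y]$, but $\PP_h\nl\x\nr$ is the ring of \emph{temperate projective series} (Definition~\ref{def:ProjRings}), whose elements may have poles along $\{h=0\}$ and are \emph{not} power series in general. Your sentence ``Since $Q$ is a genuine power-series polynomial factor of $P$ \ldots'' is exactly where the argument breaks. The entire content of the paper's proof is the passage from $Q\in\POx[y]$ to $Q\in\KK\nl\x\nr[y]$, and this uses Lemma~\ref{lem:inductive}: one must show that every coefficient of $Q$ extends to a formal power series at \emph{every} point of $F_1^{(1)}$, whence by Lemma~\ref{lem:inductive} it lies in $\KK\lb\x\rb$, and then by Lemma~\ref{lemma:TempExt} in $\KK\nl\x\nr$.

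This is also why your simple induction on $r$ (reducing to a single blowing-up) does not work. After you descend to $\pb_1\in F_1^{(1)}$, the discriminant of $(\sigma_1)_{\pb_1}^*(P)$ need not be monomial at every point of $F_1^{(1)}$ --- only the \emph{full} sequence $\sigma$ monomializes it --- so neither Theorem~\ref{local_to_semiglobal} nor Theorem~\ref{thm:extension} applies to $\sigma_1$ alone. The paper handles this by a lexicographic induction on $(r,k)$: in the case $k=1$ it applies Theorem~\ref{local_to_semiglobal} to the full $\sigma$ to get $Q\in\POx[y]$, then for each center $\pa_i\in F_1^{(1)}$ of a subsequent blowing-up it uses the inductive hypothesis (fewer blowings-up) to produce the \emph{maximal} temperate factor $q_i$ of $P_{\pa_i}$, and argues via the maximality clause of Theorem~\ref{local_to_semiglobal} that $q_i=(\sigma_1)^*_{\pa_i}(Q)$. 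Only then does $Q$ extend at every point of $F_1^{(1)}$, and Lemma~\ref{lem:inductive} finishes. Your ``routine'' final step is in fact the heart of the matter.
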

\begin{proof}

We prove this result by induction on the lexicographical order on $(r,k)$. First, suppose that $(r,k) = (r,1)$ with $r\geq 1$. By Theorem \ref{local_to_semiglobal}, there is a non-constant factor $Q\in\POx[y]$ of $P$. Without loss of generality, we may suppose that $Q$ is the monic factor of $P$ in $\POx[y]$ of maximal degree. Note that $Q$ extends at every point of $F_1^{(r)}$ by Theorem \ref{thm:extension}, and furthermore, $Q$ extends temperately by Lemma \ref{lemma:TempExt}. If $r=1$, then we conclude from Lemma \ref{lem:inductive} that $Q\in \KK\lb \x\rb [y]$, so that $Q\in \OOx[y]$ since $\KK\lb \x\rb  \cap \POx = \KK\nl \x\nr$ by Lemma \ref{lemma:TempExt}. 

If $r>1$, let $\pa_1,\ldots,\pa_j$ be the points of $F_1^{(1)}$ that are centres of subsequent blowings-up, and denote $P_i:=(\s_1)_{\pa_i}^*(P)$. Denote $\s':=\s_2\circ\cdots\circ\s_r$. By the induction hypothesis, for every $i$, $P_i$ has a temperate factor. Indeed, denoting by $\pb_i$ the point of $F_r^{(1)}$ which is sent to $\pa_i$ by $\s'$, we get that $\s^*_{\pb_i}(Q)$ is a temperate factor of ${\s'}^*_{\pb_i}(P_i)={\s}^*_{\pb_i}(P)$, obtained after only $r-1$ blowings-up. 

Now, denote by $q_i$ the monic temperate factor of $P_i$ of maximal degree. Then ${\s'}^*_{\pb_i}(q_i)$ is a temperate factor of $(\s')^*_{\pb_i}(P_i)$ such that $(\s')^*_{\pb_i}(P_i/q_i)$ has no non-constant temperate factor, otherwise by induction hypothesis, $P_i/q_i$ would have a non-constant temperate factor. Next, note that, by Theorem \ref{local_to_semiglobal}, ${\s}^*_{\pb_i}(Q)$ is also a temperate factor of ${\s}^*_{\pb_i}(P)$ such that ${\s}^*_{\pb_i}(P/Q)$ has no non-constant temperate factor. We conclude that ${\s'}^*_{\pb_i}(q_i)={\s}^*_{\pb_i}(Q)$, hence $q_i$ coincides with $(\s_1)_{\pa_i}^*(Q)$, which therefore admits a temperate extension at $\pa_i$. Moreover at every point $\pb'$ of $F_1^{(1)}\setminus \{\pa_1,\ldots,\pa_j\}$, $(\s_1)_{\pb'}^*(Q)$ is temperate, since it coincides with ${\s}_{\pb'}^*(Q)$. We conclude now, exactly as in the case $r=1$, that lemma \ref{lem:inductive} implies that $Q$ is a temperate factor of $P$.

Finally, suppose that $(r,k)$ is such that $k> 1$, and denote by $\pa\in F_{k-1}^{(j)}$ the center of $\s_k$, for some $j\leq k-1$. Denote $P_\pa := \left(\s_{1}\circ\cdots\circ\s_{k-1}\right)_\pa^*(P)$. Then by the induction hypothesis, $P_\pa$ has a non-constant temperate factor. Therefore, by denoting $ \s' :=\s_k\circ\cdots\circ\s_r$, at every point $\pb'\in(\s')^{-1}(\pa)$, the polynomial $ (\s')_{\pb'}^*(P_\pa)$ has a temperate factor. In particular, if $\pb'\in  (\s')^{-1}(\pa)\cap F_r^{(j)}$, we get that $\s^*_{\pb'}(P)$ has a temperate factor at a point of $F_r^{(j)}$ with $j<k$, and we conclude by induction.

\end{proof}


\section{Examples of W-temperate families}
\subsection{Algebraic power series}
When $\KK$ is a field, we denote by $\KK\langle x_1,\ldots, x_n\rangle$ the subring of $\KK\lb x_1,\ldots, x_n\rb$ of formal power series that are algebraic over $\KK[x_1,\ldots, x_n]$. We have the following proposition:

\begin{proposition} Let $\KK$ be an uncountable algebraically closed  field of characteristic zero.  The family of algebraic power series rings ($\KK\langle x_1,\ldots, x_n\rangle)_n$ is a minimal W-temperate family, that is, it is  contained in every other W-temperate family.
\end{proposition}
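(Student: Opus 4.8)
The statement asserts two things: that the family $(\KK\langle\x\rangle)_n$ of algebraic power series is W-temperate, and that it sits inside every W-temperate family. That $(\KK\langle\x\rangle)_n$ is a Weierstrass family in the sense of Definition~\ref{W-fam} is classical, essentially due to \cite{DL}: axioms \ref{axiom:inclusions}, \ref{axiom:permutation}, \ref{axiom:unit} are immediate (an element of $\KK\lb\x\rb$ algebraic over $\KK[\x]$ lies in $\KK\lb\x\rb$, algebraicity is symmetric in the $x_i$, and if $f(0)\neq0$ then $1/f\in\KK\lb\x\rb$ is again algebraic), axiom \ref{axiom:intersection_n+m} holds because $\KK(\!(\x)\!)$ and $\KK(\x,\y)$ are linearly disjoint over $\KK(\x)$, so an element of $\KK(\!(\x)\!)$ algebraic over $\KK(\x,\y)$ is already algebraic over $\KK(\x)$, and the Weierstrass division property \ref{axiom:weierstrass} for algebraic series is a classical theorem (see \cite{DL}). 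So the plan is to verify the three extra axioms of Definition~\ref{temperate_fam} and then prove minimality.

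For \ref{axiom:monomialcomposition} (local blowing-down), the point is that $g:=f(\x',x_1x_n)$ equals $\theta(f)$ for the injective $\KK$-endomorphism $\theta$ of $\KK\lb\x\rb$ sending $x_n\mapsto x_1x_n$ and fixing the other $x_i$, and that $\theta$ restricts to an automorphism of $\KK(\x)$. If $g$ is algebraic over $\KK[\x]$, say $Q(\x,g)=0$ with $0\ne Q\in\KK[\x][y]$ of positive degree in $y$, then applying $\theta^{-1}$ to the coefficients $q_i(\x)\in\KK(\x)$ of $Q$ produces $Q'\in\KK(\x)[y]$ with $\theta(Q'(\x,f))=Q(\x,g)=0$; injectivity of $\theta$ gives $Q'(\x,f)=0$, and clearing denominators shows $f$ algebraic over $\KK[\x]$, i.e. $f\in\KK\langle\x\rangle$. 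For \ref{axiom:last} (temperateness), with $\x=(x_1,x_2)$, reduce $P(\x,z)$ modulo $\G(x_2,z)$ — which is monic in $z$ of degree $d:=\deg_z\G$ and irreducible over $\KK(x_2)$ by Gauss — to get $\overline P\in A\lb x_1\rb$, where $A:=\KK[x_2,z]/(\G)$ is a domain, finite and free over $\KK[x_2]$, with fraction field $\KK(x_2)[z]/(\G)$. Each root $\g,\g'\in\KK\langle x_2\rangle\subset\KK\lb x_2\rb$ of $\G$ has degree exactly $d$ over $\KK(x_2)$, so the $\KK[x_1,x_2]$-algebra evaluations $A\lb x_1\rb\to\KK\lb\x\rb$ sending $z$ to $\g(x_2)$, resp. $\g'(x_2)$, are injective and carry $\overline P$ to $P(\x,\g(x_2))$, resp. $P(\x,\g'(x_2))$. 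Hence an algebraic relation over $\KK[\x]$ satisfied by $P(\x,\g(x_2))$ pulls back injectively through the first map to a relation satisfied by $\overline P$, which pushes forward through the second to one satisfied by $P(\x,\g'(x_2))$, so $P(\x,\g'(x_2))\in\KK\langle\x\rangle$. Note that the growth bound $\deg_{x_2}p_k\le\alpha k$ plays no role here; it matters only for families subject to a convergence/growth constraint.

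Axiom \ref{axiom:Abhyankar-Moh} (generic hyperplane sections) is the main obstacle, being the one point where a non-formal input is required. The plan is to argue by contraposition: assuming $W$ uncountable, deduce $F\in\KK\langle\x\rangle$. The $\x'$-coefficients of $F(\x',tx_1)$ are polynomials in $t$, and one stratifies $W=\bigcup_{d,D}W_{d,D}$, where $\lambda\in W_{d,D}$ iff $F(\x',\lambda x_1)$ is annihilated by a monic polynomial in $y$ of degree $\le d$ whose coefficients have $\x'$-degree $\le D$; some $W_{d,D}$ is uncountable. The tuples of coefficients of such a polynomial annihilating $F(\x',tx_1)$ cut out a closed subscheme $\mathcal V\subset\mathbb A^1_t\times\mathbb A^N$ (a priori defined by infinitely many affine-linear equations in the $\mathbb A^N$-coordinates with $\KK[t]$-coefficients, hence of finite type over $\KK$ by Noetherianity), whose fibre over $\lambda$ records the admissible tuples for $F(\x',\lambda x_1)$; thus $W_{d,D}$ is the closed-point set of the image of the projection $\mathcal V\to\mathbb A^1_t$, which is constructible by Chevalley's theorem. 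An uncountable constructible subset of $\mathbb A^1$ over the uncountable algebraically closed field $\KK$ is cofinite, hence dense, so the projection is dominant and its generic fibre $\mathcal V_\eta$ is nonempty; a closed point there (with $\mathcal V_\eta$ finite type over $\KK(t)$) yields $a_j\in L[\x']$ with $L/\KK(t)$ finite and $F(\x',tx_1)^d+\sum_j a_j(\x')F(\x',tx_1)^{d-j}=0$, whence $F(\x',tx_1)$ is algebraic over $\KK(t,\x')$. Finally, $F(\x',tx_1)=\mu(F)$ for the injective $\KK$-algebra homomorphism $\mu\colon\KK\lb\x\rb\to\KK\lb t,\x'\rb$ sending $x_n\mapsto tx_1$ and $x_i\mapsto x_i$ ($i<n$), which restricts to an isomorphism $\KK(\x)\xrightarrow{\sim}\KK(t,\x')$; transporting the algebraic relation back through $\mu^{-1}$ on coefficients, exactly as for \ref{axiom:monomialcomposition}, shows $F$ algebraic over $\KK(\x)$, i.e. $F\in\KK\langle\x\rangle$.

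For minimality, I would use that $\KK\langle\x\rangle$ is the Henselization of the local ring $\KK[\x]_{(\x)}$ realized inside $\KK\lb\x\rb$. Let $(\KK\nl\x\nr)_n$ be any W-temperate family. By Definition~\ref{W-fam}\,\ref{axiom:inclusions} and \ref{axiom:unit} we have $\KK[\x]_{(\x)}\subseteq\KK\nl\x\nr$, and by Proposition~\ref{rk:Wtemp}\,\ref{prop:hensel} the ring $\KK\nl\x\nr$ is Henselian local with maximal ideal $(\x)$, so $\KK[\x]_{(\x)}\to\KK\nl\x\nr$ is a local homomorphism. The universal property of Henselization then provides a unique local homomorphism $j\colon\KK\langle\x\rangle\to\KK\nl\x\nr$ extending it, and composing $j$ with the (local) inclusion $\KK\nl\x\nr\hookrightarrow\KK\lb\x\rb$ must, by the same universal property applied to the Henselian ring $\KK\lb\x\rb$, coincide with the standard embedding $\KK\langle\x\rangle\hookrightarrow\KK\lb\x\rb$; hence $j$ is injective with image contained in $\KK\nl\x\nr$, that is, $\KK\langle\x\rangle\subseteq\KK\nl\x\nr$ as subrings of $\KK\lb\x\rb$. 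Of the whole argument, I expect axiom \ref{axiom:Abhyankar-Moh} to be the genuinely delicate part; the remaining verifications are formal, resting on the behaviour of algebraic relations under suitable homomorphisms and on the universal property of Henselization.
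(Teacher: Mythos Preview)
Your proof is correct, and for minimality and axiom~\ref{axiom:monomialcomposition} it matches the paper's. The arguments for axioms~\ref{axiom:Abhyankar-Moh} and~\ref{axiom:last}, however, are genuinely different from the paper's and in one case markedly simpler. For axiom~\ref{axiom:last}, the paper reduces each $p_k$ modulo $\G$, then invokes the nested Artin approximation theorem for linear equations \cite{CPR} to produce an algebraic $\wdt Q$ with $\wdt Q(\x,\g(x_2))=Q(\x,\g(x_2))$, and uses the degree bound $\deg_{x_2}(p_k)\le\alpha k$ to identify $\wdt Q$ with $Q$; your argument via the injective $\KK[\x]$-algebra evaluations $A\lb x_1\rb\to\KK\lb\x\rb$ at $\g$ and $\g'$ (with $A=\KK[x_2,z]/(\G)$) transports the algebraic relation directly, bypassing the approximation machinery entirely and confirming, as you note, that the growth bound is irrelevant for this family. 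For axiom~\ref{axiom:Abhyankar-Moh}, the paper takes a more elementary route: it lets $\KK_0$ be the (countable) algebraic closure of the field generated over $\Q$ by the coefficients of $F$, picks $\la\in W\setminus\KK_0$ so that $\la$ is transcendental over $\KK_0$, observes that the existence of an algebraic relation for $F(\x',\la x_1)$ is a countable linear system over $\KK_0(\la)$, solves it over $\KK_0(\la)$, and then replaces $\la$ by $x_n/x_1$; your approach via the stratification $W=\bigcup W_{d,D}$, Chevalley constructibility, and the generic fibre achieves the same end through scheme-theoretic language. Both work; the paper's transcendence trick avoids any appeal to constructibility, while your argument makes the underlying geometry more visible.
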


\begin{proof}
Let $(\KK\nl x_1,\ldots, x_n\nr)_n$ be a arbitrary W-temperate family. Since $\KK\nl \x \nr$ is a Henselian local ring containing $\KK[\x]_{(\x)}$, and since $\KK\langle \x\rangle$ is the Henselization of $\KK[\x]_{(\x)}$, we have
\(
\KK\langle \x \rangle\subset\KK\nl \x\nr
\)
by the universal property of the Henselization. 

Next, let us prove that $(\KK\langle x_1,\ldots, x_n\rangle)_n$ is a W-temperate family. The first four axioms of Definition \ref{W-fam} are classical, while the fifth axiom has been proved by Lafon in \cite{La}, see also \cite{Rond}. So, let us check that Definition \ref{temperate_fam} is verified. Once again, axiom \ref{axiom:monomialcomposition} is straightforward, and we consider:

\medskip
\noindent
\textbf{Axiom \ref{axiom:Abhyankar-Moh} of Definition \ref{temperate_fam}:} We prove the contrapositive of the axiom, that is, let $F\in\KK\lb \x \rb$ be such that 
\[
W:=\left\{\la\in \KK\mid  F(\x', \la x_1)\in \KK\langle \x'\rangle\right\}
\]
is uncountable and let us prove that $F \in \KK \langle \x'\rangle $. Let us denote by $\KK_0$ the algebraic closure of the field extension of $\Q$ generated by the coefficients of $F$. Since $F$ has a countable number of coefficients, $\KK_0$ is a countable field. Let $\la\in W\setminus \KK_0$; in particular $\la$ is transcendental over $\KK_0$. By assumption on $W$, we have
\begin{equation}\label{alg_equations}
a_0(\x')F(\x',\la x_1)^d+a_1(\x')F(\x',\la x_1)^{d-1}+\cdots+ a_d(\x')=0
\end{equation}
where the $a_i(\x')\in\KK[\x']$. Let us denote by $(\underline a)$ the vector whose entries are the coefficients of the $a_i(\x')$. Then \eqref{alg_equations} is satisfied if and only if $(\underline a)$ satisfies a (countable) system of linear equations $(\mathcal S)$ whose coefficients are in $\KK_0(\la)$ (determined by the vanishing of the coefficients of each monomial ${\x'}^\a$ for $\a\in\N^{n-1}$). And $(\mathcal S)$ is equivalent to a finite system of linear equations $(\mathcal S')$ with coefficients in $\KK_0(\la)$. And this system has a nonzero solution in $\KK$ if and only if it has a nonzero solution in $\KK_0(\la)$, and this solution yields non trivial polynomials $\wdt a_i(\x')\in\KK_0(\la)[\x']$ such that
$$
\wdt a_0(\x')F(\x',\la x_1)+\wdt a_1(\x')F(\x',\la x_1)^{d-1}+\cdots+ \wdt a_d(\x')=0.
$$
By multiplying by some polynomial in $\KK_0[\la]$ we may assume that the $\wdt a_i(\x')$ belong to $\KK_0[\x'][\la]$, thus we write $\wdt a_i=\wdt a_i(\x',\la)$. By dividing by a large enough power of $x_n-\la x_1$, we may assume that one of them is not divisible by $x_n-\la x_1$. Therefore not all the $\wdt a_i(\x',x_n/x_1)$ are zero, and
$$
a_0(\x',x_n/x_1)F(\x',x_n/x_1)+a_1(\x',x_n/x_1)F(\x',x_n/x_1)^{d-1}+\cdots+ a_d(\x',x_n/x_1)=0,
$$
whence $F(\x)\in\KK\langle \x\rangle$. This proves the result.
 
\medskip
\noindent
\textbf{Axiom \ref{axiom:last} of Definition \ref{temperate_fam}:} We follow the notation of axiom \ref{axiom:last}. For each $p_k(x_2,z)$, we consider its Euclidean division by the minimal polynomial $\G$ of $\g$:
\[
p_k(x_2,z) = \G(x_2,z) \cdot q_k(x_2,z) + r_k(x_2,z)
\]
where $\deg_z(r_k) <d = \deg_z(\G)$. By Lemma \ref{lemma:div-linear-bound}, there is $a\in\N$ such that $\deg_{x_2}(r_k(x_2,z))\leq a k$ for every $k$.

Note that $p_k(x_2,\g'(x_2)) = r_k(x_2,\g'(x_2))$ for every root $\g'(x_2)$ of $\G$, so we may consider the auxiliary function:
\[
Q(x_1,x_2,z) = \sum_{ k \in \mathbb{N}} x_1^k r_k(x_2,z) = \sum_{k=0}^{d-1} q_{k}(\x) z^k
\]
where $q_{i}(\x) \in \K[x_2] \lb x_1 \rb$, and note that $P(\x,\g'(x_2)) = Q(\x,\g'(x_2))$ for every root $\g'(x_2)$ of $\G$. Since  $\deg_{x_2}(r_k(x_2,z))\leq a k$ for every $k$, we may write $q_k(\x)=\wdh q_k(x_1,x_1x_2,\ldots, x_1x_2^a)$ for some formal power series $\wdh q_k(x_1,y_1,\ldots, y_ a)\in\KK\lb x_1,\y\rb$. Now, there exist  formal power series $\wdh g_i$, for $i=1$, \ldots, $a$,  and $\wdh k$ such that:
\[
\begin{aligned}
Q(\x,\g(t))=\sum_{k=0}^{d-1}\wdh q_k(x_1,\y)z^k + &\sum_{i=1}^a (y_i-x_1x_2^i)\wdh g_i(\x,\y,t,z)+ 
    (z- \g(t))\wdh k(\x,\y,t,z).
    \end{aligned}
\]
By the nested approximation Theorem for linear equations (see \cite[Theorem 3.1]{CPR}) this equation has a non trivial nested algebraic solution
 $$
 (\wdt q_k(x_1,\y), \wdt g_i(\x,\y,t,z)), \wdt k(\x,\y,t,z))\in \KK\langle x_1,\y\rangle^d\times\KK\langle \x,\y,t,z\rangle^{a+1}.
 $$
 In particular $\wdt Q:=\wdt q_0(x_1,x_1x_2,\ldots, x_1x_2^a)+\cdots+\wdt q_{d-1}(x_1,x_1x_2,\ldots, x_1x_2^a)z^{d-1}$ is an algebraic power series satisfying
 $$
 \wdt Q(\x,\g(x_2))=Q(\x,\g(x_2)).
 $$
Moreover we have $\wdt Q= \sum_{ k \in \mathbb{N}} x_1^k \wdt r_k(x_2,z) $ where the $\wdt r_k(x_2,z)\in\KK[x_2,z]$ with $\deg_z(\wdt r_k)\leq d-1$ and $\deg_{x_2}(\wdt r_k)\leq ak$. So, since for every $k$, $r_k(x_2,\g(x_2))=\wdt r_k(x_2,\g(x_2))$, we have $\wdt r_k=r_k$ since the degree of the minimal polynomial of $\g(x_2)$ over $\KK[x_2]$ is $d$. In particular we have
$$P(\x,\g'(x_2))=Q(\x,\g(x_2))=\wdt Q(\x,\g'(x_2))\in\KK\langle x_1,x_2\rangle$$
for every root $\g'(x_2)$ of $\G$. This ends the proof.
\end{proof}
 
 \begin{lemma}\label{lemma:div-linear-bound}
Let $\G(\x,z)\in\KK[\x,z]$ be a monic polynomial in $z$ of degree $e$. Let $p(\x,z)\in\KK[\x,z]$ with $\deg_z(p)\leq d$, where $d\geq e-1$. Consider the division of $p$ by $\G$: 
$$p(\x,z)=\G(\x,z)q(\x,z)+r(\x,z)$$
with $\deg_z(r)<e$. Then $\deg_\x(r)\leq \deg_\x(p)+(d-e+1)\deg_\x(\G).$
\end{lemma}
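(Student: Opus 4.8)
The plan is to prove the bound by induction on $d$, which amounts to running the Euclidean division algorithm one step at a time and keeping track of how fast the $\x$-degree of the intermediate remainders can grow. Write $\delta := \deg_\x(\G)$, and recall that $\G$ is monic in $z$ of $z$-degree $e$, so Euclidean division by $\G$ exists and is unique in $\KK[\x][z]$.

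\textbf{Base case} $d = e-1$: here $\deg_z(p) < e$, so the division is trivial, $q = 0$ and $r = p$, and the asserted inequality reads $\deg_\x(r) \le \deg_\x(p)$, which holds since $d - e + 1 = 0$.

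\textbf{Inductive step.} Suppose $d \ge e$ and that the statement holds whenever the $z$-degree bound is $d-1$ (note $d - 1 \ge e - 1$). Let $p_d(\x) \in \KK[\x]$ be the coefficient of $z^d$ in $p$; then $\deg_\x(p_d) \le \deg_\x(p)$. Set $\tilde p := p - p_d(\x)\, z^{d-e}\, \G(\x,z)$. Since $\G$ is monic in $z$ of degree $e$, the top term $p_d z^d$ cancels, so $\deg_z(\tilde p) \le d - 1$; and writing $\G = z^e + \sum_{i<e} g_i(\x) z^i$ one sees that each coefficient of $p_d z^{d-e}\G$ has $\x$-degree at most $\deg_\x(p) + \delta$, hence $\deg_\x(\tilde p) \le \deg_\x(p) + \delta$. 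Applying the inductive hypothesis to $\tilde p$ gives $\tilde p = \G\tilde q + r$ with $\deg_z(r) < e$ and
\[
\deg_\x(r) \;\le\; \deg_\x(\tilde p) + (d - e)\,\delta \;\le\; \deg_\x(p) + (d - e + 1)\,\delta.
\]
Finally $p = \G\big(\tilde q + p_d z^{d-e}\big) + r$ with $\deg_z(r) < e$, so by uniqueness of Euclidean division, $r$ is the remainder of $p$ by $\G$, and the displayed bound is exactly the claim.

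I do not expect any serious obstacle here: the only point to watch is that a single division step raises the $\x$-degree by at most $\delta = \deg_\x(\G)$, and that at most $d - e + 1$ such steps are needed to lower the $z$-degree below $e$; the induction is simply a clean bookkeeping of these two facts.
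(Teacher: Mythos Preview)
Your proof is correct and follows essentially the same approach as the paper: both argue by induction on $d$, peeling off the leading $z$-term via one step of Euclidean division and tracking that the $\x$-degree grows by at most $\deg_\x(\G)$ at each of the $d-e+1$ steps. Your write-up is in fact cleaner than the paper's, which contains a few typos in the inductive step.
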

\begin{proof}
The proof is made by induction on $d\geq e-1$. If $d=e-1$, it is clear. Assume that the result is proved for polynomials of degree $d-1$ where $d\geq e$. We can write
$$
p(\x,z)=\G(\x,r)\times b_e(\x)+\wdt b(\x,z)
$$
where $p_e(\x)$ is the coefficient of $z^e$ in $p(\x,z)$, and $\deg_z(\wdt p)<\deg_z(b)$.  Therefore $\deg_\x(\wdt p)\leq \deg_\x(p)+\deg_\x(\G)$. Since $p$ and $\wdt p$ have the same remainder $r$ by the division  by $\G(\x,z)$, we apply the inductive assumption to see that
\[
\deg_\x(r)\leq \deg_\x(\wdt p)+(d-e)\deg_\x(\G)\leq \deg_\x(p)+(d-e+1)\deg_\x(\G),
\]
finishing the proof.
\end{proof}


\subsection{Convergent  power series}\label{ssec:ComplexAnalyticTemperate}

The goal of this Section is to prove Corollary \ref{thm:Gabrielov_convergent}. We start by recalling the definition of convergent power series:

\begin{definition}
Let $\KK$ be a field equipped with an absolute value $|\cdot|$ for which $\KK$ is  complete. Let $f=\sum_{\a\in\N^n}f_\a\x^\a\in\KK\lb \x\rb$ be a power series. We say that $f$ is a \emph{convergent power series} if the series $f(\xi)$ converges for $\xi\in\KK^n$ small enough or, equivalently, if there exist real positive numbers $A$, $B>0$ such that
$$\forall\a\in\N^n,\ |f_\a|A^{|\a|}\leq B.$$
The set of convergent power series over $\KK$ is a subring of $\KK\lb x_1,\ldots, x_n\rb$ denoted by $\KK\{x_1,\ldots, x_n\}$.
\end{definition}

A \emph{morphism of analytic $\KK$-algebras} is a morphism of local rings $\phi:A\lgw B$, where $A=\fract{\KK\{\x\}}{I}$ and $B=\fract{\KK\{\ub\}}{J}$, and $\phi$ is induced by a morphism of power series rings $\KK\{\x\}\lgw \KK\{\ub\}$. For a morphism $\phi:A\lgw B$ of analytic $\KK$-algebras, we denote by $\Ar(\phi)$ the analytic  rank that is equal to $\dim\left(\fract{A}{\Ker(\phi)}\right)$.

\subsubsection{Convergent power series rings as W-temperate families}
 We begin by recalling the following result:

\begin{theorem}\label{thm:AM}\cite{AM}
Let $\KK$ be a field equipped with an absolute value such that that $\{|y|\mid y\in\KK\}$ is dense in $\R_+$, and let $f\in\KK\lb x_1,\ldots, x_n\rb$ be a divergent power series. Let $W$ be the set of $\la\in\KK$ such that $f(\la x_2,x_2,\ldots, x_n)$ is convergent. 
Then $W$ is a countable union of closed nowhere dense subsets of $\KK$.
\end{theorem}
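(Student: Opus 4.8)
The plan is to realise $W$ as an explicit countable union $W=\bigcup_{N\ge 1}W_N$ of closed sets and to prove that, since $f$ is divergent, each $W_N$ is nowhere dense. First I would record the elementary fact that a power series $g=\sum_\a g_\a\z^\a$ over $\KK$ is convergent if and only if there is an integer $N\ge 1$ with $|g_\a|\le N^{|\a|+1}$ for all $\a$. Writing $f=\sum_{\a\in\N^n}f_\a\x^\a$ and performing the substitution $x_1\mapsto\lambda x_2$, the coefficient of the monomial $x_2^{\,j}x_3^{\a_3}\cdots x_n^{\a_n}$ in $f(\lambda x_2,x_2,x_3,\ldots,x_n)$ is a polynomial
\[
P_{j,\ovl\a}(\lambda)=\sum_{\a_1+\a_2=j}f_{(\a_1,\a_2,\ovl\a)}\lambda^{\a_1}\in\KK[\lambda],\qquad \deg_\lambda P_{j,\ovl\a}\le j,
\]
where $\ovl\a=(\a_3,\ldots,\a_n)$ and $|\ovl\a|:=\a_3+\cdots+\a_n$, so that $|\a|=\a_1+\a_2+|\ovl\a|$. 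Hence, setting
\[
W_N:=\{\lambda\in\KK\ :\ |P_{j,\ovl\a}(\lambda)|\le N^{\,j+|\ovl\a|+1}\ \text{ for all }j\in\N,\ \ovl\a\in\N^{n-2}\},
\]
one gets $W=\bigcup_{N\ge 1}W_N$, and each $W_N$ is closed, being an intersection of preimages of the closed intervals $[0,N^{\,j+|\ovl\a|+1}]$ under the continuous maps $\lambda\mapsto|P_{j,\ovl\a}(\lambda)|$. It then remains to show that each $W_N$ has empty interior.

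Suppose $W_N$ contains a ball $\{\lambda:|\lambda-\lambda_0|\le\rho\}$ with $\rho>0$. Since $x_1\mapsto x_1+\lambda_0x_2$ is a linear change of coordinates, it preserves convergence and divergence of power series, and it carries the $\mu$-slice of the new series to the $(\mu+\lambda_0)$-slice of the old one; so, replacing $f$ by $f(x_1+\lambda_0x_2,x_2,\ldots,x_n)$, I may assume $\lambda_0=0$, that is, $|P_{j,\ovl\a}(\lambda)|\le N^{\,j+|\ovl\a|+1}$ for all $\lambda$ with $|\lambda|\le\rho$. Next I would invoke the classical estimate bounding the coefficients of a polynomial by its supremum norm on a ball: there is a constant $C=C(\KK)\ge 1$ such that, whenever $Q\in\KK[\lambda]$ has degree at most $d$ and $|Q(\lambda)|\le M$ for every $\lambda\in\KK$ with $|\lambda|\le\rho$, the coefficient of $\lambda^k$ in $Q$ has absolute value at most $C^{\,d}M\rho^{-k}$. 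Applying this to each $P_{j,\ovl\a}$ (so $d=j$, $M=N^{\,j+|\ovl\a|+1}$) and using $\a_1\le j$, we obtain
\[
|f_\a|\le C^{\,j}\,N^{\,j+|\ovl\a|+1}\,\rho^{-\a_1}\le E^{\,|\a|+1}\quad\text{for all }\a,\qquad E:=\max\{1,\rho^{-1}\}\,C\,N\ \ (\ge N\ge 1),
\]
which forces $f$ to be convergent, contradicting the hypothesis. Hence each $W_N$ is nowhere dense, and the theorem follows.

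The step I expect to be the main obstacle — indeed the only non-formal ingredient — is the coefficient estimate for $Q$, and specifically the fact that the loss incurred is at worst a \emph{geometric} factor $C^{\deg Q}$ in the degree. Over $\C$ this is immediate from Cauchy's integral formula on the circle $|\lambda|=\rho$ (with $C=1$); over $\R$ it follows from Chebyshev--Markov-type inequalities for real polynomials on an interval; and over a non-archimedean field it amounts to comparing the supremum norm of $Q$ on the ball $\{|\lambda|\le\rho\}$ with its Gauss norm up to a factor geometric in the degree, which is precisely where the hypothesis that $\{|y|:y\in\KK\}$ is dense in $\R_+$ enters the argument.
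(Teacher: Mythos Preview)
The paper does not give its own proof of this statement; it simply quotes the result from Abhyankar and Moh \cite{AM}. Your argument is correct and is, in outline, the original one: write $W=\bigcup_{N}W_N$ with each $W_N$ closed, and show that if some $W_N$ contained a ball then $f$ would be convergent. Your translation to $\lambda_0=0$ and the final bookkeeping $|f_\a|\le E^{|\a|+1}$ are both fine.

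You have also correctly located the one genuine ingredient, the coefficient estimate $|q_k|\le C^{d}M\rho^{-k}$ for a polynomial $Q$ of degree $\le d$ with $|Q(\lambda)|\le M$ on $\{|\lambda|\le\rho\}$, and your case analysis is accurate. Over $\C$ one has $C=1$ by Cauchy; over $\R$ the Chebyshev--Markov inequalities give such a geometric bound (alternatively, since any archimedean $\KK$ sits densely in $\R$ or $\C$, the bound on $\KK$-points extends by continuity and one can invoke the complex estimate); and in the non-archimedean case, density of the value group in $\R_+$ is exactly what makes the supremum of $|Q|$ on $\{|\lambda|\le\rho\}$ equal to the Gauss norm $\max_k|q_k|\rho^k$, so that again $C=1$. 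For this last point: after rescaling to $\rho=1$ and normalising $\max_k|q_k|=1$, let $j$ be the least index with $|q_j|=1$; then for $s<1$ in $|\KK^*|$ sufficiently close to $1$, the term $q_j\lambda^j$ strictly dominates all others whenever $|\lambda|=s$, so $|Q(\lambda)|=s^{j}$ by the strict ultrametric inequality, and letting $s\to 1^-$ (using density) gives $\sup_{|\lambda|\le 1}|Q(\lambda)|=1$.
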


\begin{proposition}\label{prop:cvgt-W}
Let  $\KK$ be a characteristic zero field that is equipped with a non trivial absolute value that makes $\KK$ a complete field. Then the family $(\KK\{x_1,\ldots, x_n\})_n$ is a Weierstrass family over $\KK$. Moreover, if we assume that $\KK$ is algebraically closed, then $(\KK\{x_1,\ldots, x_n\})_n$ is a W-temperate family over $\KK$.
\end{proposition}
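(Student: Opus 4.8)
The plan is to check the two assertions in turn. The first — that $(\KK\{\x\})_n$ is a Weierstrass family — uses only that $\KK$ is complete of characteristic zero, and amounts to verifying the five axioms of Definition~\ref{W-fam}. Axioms \ref{axiom:inclusions}, \ref{axiom:intersection_n+m}, \ref{axiom:permutation} are immediate from the defining growth estimate $|f_\a|A^{|\a|}\leq B$: polynomials are convergent, a convergent series in $(\x,\y)$ which happens not to involve $\y$ obeys the same estimate read as a series in $\x$, and permuting variables does not affect the estimate. Axiom \ref{axiom:unit} is the classical majorant computation: if $f(0)\neq 0$, write $f=f(0)(1-g)$ with $g\in(\x)\KK\{\x\}$ and check that $\sum_{m\geq 0}g^m$ converges on a small enough polyradius, the coefficients of the inverse existing because $\KK$ is complete; this is insensitive to whether the absolute value is archimedean. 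Axiom \ref{axiom:weierstrass}, Weierstrass division for convergent power series, is classical, both in the archimedean case (where $\KK$ is $\R$ or $\C$) and in the non-archimedean one, and I would simply invoke the standard references.

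For the W-temperate part, assume in addition that $\KK$ is algebraically closed. Axiom \ref{axiom:monomialcomposition} is again an elementary estimate: if $f(\x',x_1x_n)$ is convergent, say $|f_\a|A^{|\a|+\a_n}\leq B$, then $0\leq\a_n\leq|\a|$ gives $|f_\a|(A')^{|\a|}\leq B$ with $A'=\min(A,A^2)$, so $f$ itself is convergent. Axiom \ref{axiom:Abhyankar-Moh} is where Theorem~\ref{thm:AM} (Abhyankar--Moh) enters: an algebraically closed field with a non-trivial absolute value has divisible, hence dense, value group in $\R_{>0}$ — and if that absolute value is archimedean then $\KK=\C$, whose value group is $\R_{\geq 0}$ — so the hypothesis of Theorem~\ref{thm:AM} is met, and for divergent $F$ the set $W$ is a countable union of closed nowhere dense subsets of $\KK$. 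Since $\KK$ is a complete metric space it is a Baire space, and since the absolute value is non-trivial $\KK$ has no isolated points; hence $\KK$ cannot be a countable union of closed nowhere dense sets even after adjoining a countable set, so $\KK\setminus W$ is uncountable.

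The delicate point is axiom \ref{axiom:last} (temperateness), which I would prove by transcribing the argument given above for algebraic power series. Dividing each $p_k$ by the $z$-monic polynomial $\G$, and using that $\deg_z p_k$ is bounded (because $P\in\KK\lb\x\rb[z]$), Lemma~\ref{lemma:div-linear-bound} yields remainders $r_k$ with $\deg_z r_k<d:=\deg_z\G$ and $\deg_{x_2}r_k\leq\beta k$ for a suitable $\beta$ (the bounded-$x_2$-degree term $r_0(x_2,z)$ being discarded, as it contributes $r_0(x_2,\g(x_2)),\,r_0(x_2,\g'(x_2))\in\KK\{x_2\}$, which are automatically convergent); since $\G(x_2,\g(x_2))=\G(x_2,\g'(x_2))=0$, this does not change $P(\x,\g(x_2))$ or $P(\x,\g'(x_2))$, so we may assume $\deg_z p_k<d$ and $\deg_{x_2}p_k\leq\beta k$. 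Putting $y_i=x_1x_2^i$ for $1\leq i\leq\beta$ and $\y=(y_1,\dots,y_\beta)$, the polynomial $P$ has the form $\sum_{l<d}\widehat q_l(x_1,\y)z^l$ specialized at $y_i=x_1x_2^i$ for some formal $\widehat q_l$; one writes down, exactly as in the algebraic case, the linear equation expressing that the \emph{convergent} series $P(\x,\g(x_2))$ lies in the ideal $(y_i-x_1x_2^i,\ z-\g(x_2))$ once the nested unknowns $\widehat q_l(x_1,\y)$ — together with auxiliary series $\widehat g_i,\widehat k$ — are suitably chosen. This system has a formal solution; applying the \emph{convergent} version of the nested approximation theorem for linear equations, \cite{CPR}, produces a convergent solution with the same nesting, $\widehat q_l\in\KK\{x_1,\y\}$. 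Then $\widetilde Q:=\sum_{l<d}\widehat q_l(x_1,x_1x_2,\dots,x_1x_2^\beta)z^l$ is convergent and, substituting $y_i=x_1x_2^i$, $z=\g(x_2)$ into the identity, $\widetilde Q(\x,\g(x_2))=P(\x,\g(x_2))$; comparing $\widetilde Q$ and $P$ modulo $\G$ coefficientwise in powers of $x_1$ — each such coefficient being a polynomial in $x_2$ and $\g(x_2)$ having degree $d$ over $\KK(x_2)$ — one gets $\widetilde Q\equiv P$ modulo $\G$. Hence the reduced coefficients of $P$ are convergent and $P(\x,\g'(x_2))=\sum_{l<d}\widehat q_l(x_1,x_1x_2,\dots)\g'(x_2)^l\in\KK\{\x\}$. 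The only step I expect to be a genuine obstacle is the invocation of convergent nested linear approximation: full nested Artin approximation fails for convergent power series, and one must rely on the fact that the system here is \emph{linear}, for which nested approximation does hold; everything else is an elementary estimate or a verbatim copy of the algebraic-power-series argument.
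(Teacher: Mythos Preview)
Your treatment of the Weierstrass axioms and of axioms~\ref{axiom:monomialcomposition} and~\ref{axiom:Abhyankar-Moh} of Definition~\ref{temperate_fam} matches the paper's, with more detail on the elementary estimates; nothing to add there.

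For axiom~\ref{axiom:last} you take a genuinely different route from the paper. The paper does \emph{not} transcribe the algebraic-power-series argument; it gives an analytic proof. It works on the affine curve $\mathcal C=\{\G(t,z)=0\}$ (in the non-archimedean case, on its Berkovich analytification), passes to the normalized projective model $\widetilde{\mathcal C}$, and uses Riemann--Roch to produce a rational function $h$ whose divisor dominates the poles of the coordinate functions and whose only new pole lies inside the disc parametrized by $\g$. Two applications of the maximum principle (classical for $\KK=\C$, Lemma~\ref{lem:max_principle} otherwise) then give $\|p_k\|_{B_2}\leq M^{\alpha k}\|p_k\|_{B_1}$ on suitable closed balls around the graphs of $\g'$ and $\g$, and the convergence of $P(\x,\g'(x_2))$ follows from that of $P(\x,\g(x_2))$.

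Your approach has a real gap precisely at the step you flag. The reference \cite{CPR} proves linear nested approximation for \emph{algebraic} power series, not for convergent power series over an arbitrary complete algebraically closed valued field, and you supply no alternative source or argument. The assertion that ``for linear systems nested approximation does hold'' in $\KK\{\x\}$ is exactly what would need to be established; nested approximation is known to fail for convergent series in general, and the linear case in this generality is not a result one can simply cite. Note that one of the authors of the present paper is a co-author of \cite{CPR}; the fact that they develop the Riemann--Roch/maximum-principle machinery instead of invoking a convergent analogue of \cite{CPR} is strong circumstantial evidence that the shortcut is not available. If you could supply that missing ingredient, the remainder of your transcription (the degree control on the $\widetilde r_k$ coming from the substitution $y_i=x_1x_2^i$, and the uniqueness argument via $[\KK(x_2)(\g):\KK(x_2)]=d$) is correct, but as written the proof of axiom~\ref{axiom:last} is incomplete.
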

\begin{proof}
Axioms \ref{axiom:inclusions} \ref{axiom:intersection_n+m} \ref{axiom:permutation} \ref{axiom:unit} of Definition \ref{W-fam}, and Axiom \ref{axiom:monomialcomposition} of Definition \ref{temperate_fam} are easily verified. Axiom \ref{axiom:weierstrass} of Definition \ref{W-fam} is well known (see \cite{Bo} for example). Therefore  $(\KK\{x_1,\ldots, x_n\})_n$ is a Weierstrass family over $\KK$. We assume, from now on, that $\mathcal{K}$ is algebraically closed.

\medskip
\noindent
First let us prove \ref{axiom:Abhyankar-Moh} of Definition \ref{temperate_fam}. First, we remark that, since $\KK$ is complete and the absolute value is not trivial, $\KK$ is uncountable. Moreover because $|\cdot|$ is non trivial, there is $y\in\KK$ such that $|y|\neq 0$ or $1$. Because $\KK$ is algebraically closed, $|\KK|$ contains the set $\{|y|^{p/q}\mid p, q\in\N^*\}$ that is dense in $\R_+$. Therefore we can apply Theorem \ref{thm:AM} to see that W is a countable union of closed nowhere dense subsets of $\KK$. But since $\KK$ is a complete metric space, by Baire category theorem, $\KK\setminus W$ is uncountable.

\medskip
\noindent
Now let us prove Axiom \ref{axiom:last}  of Definition \ref{temperate_fam}: Denote by $\Gamma \in \KK[t,z]$ the minimal polynomial of $\g$ and let $\g' \in \KK \lb t \rb$ be a conjugate root; note that $\g$ and $\g' \in \KK \{ t \}$ since they are algebraic. Consider the curve $\mathcal{C} \subset \KK^2_{t,z}$ given by $\Gamma[t,z]=0$, and let $\pi: \KK^2_{t,z} \lgw\KK_{t}$ be the projection $\pi(t,z)=t$. Since $\g$ and $\g' \in \KK \{ t \}$, there exists a closed ball $D_0 \subset \K_t$ centered at the origin such that, $\g$ and $\g'$ induce analytic functions on $ D_0$. If $\mathcal{K} = \mathbb{C}$, let $\mathcal{C}^{an}=\mathcal{C}$, and if $\mathcal{K}$ is a non-archimedean field, let $\mathcal{C}^{an}$ be the analytification of $\mathcal{C}$ in the sense of Berkovich. Denote by $D $ and $D' \subset \mathcal{C}^{an}$ the images of $\g$ and $\g'$ over $D_0^{an}$ respectively. Set $\xi=\g(0)$ and $\xi'=\g(0)$. The points $\xi$ and $\xi'$ are type-1 points of $\mathcal C^{an}$, therefore they admit a fondamental system of open neighborhoods in $\mathcal C^{an}$ that are open balls (see \cite[Corollary 4.27]{BPR}). Therefore, because $D_0^{an}$ is compact,  we may choose $D_0$ small enough such that there exist closed balls $B_1$ and $B_2$ with
$\xi\in B_1\subset D$ and $D'\subset B_2$. Now, recall that $P(\x,\g(x_2))\in \KK \{ x_1,x_2\}$, where:
\[
P(\x,z) = \sum_{k \in \mathbb{N}} x_1^k p_k(x_2,z)
\]
and $p_k(x_2,z)$ are polynomials such that $\deg_{x_2}(p_k) \leq \alpha k$ for some $\alpha \in \mathbb{N}$. In particular this implies that there exists $A$ and $B>0$ such that:
\[
\|p_k(x_2,\g(x_2))\|_{D_0 } \leq A \, B^k \, k!,\quad \forall k\in \mathbb{N}.
\]
But $D_0$ is dense in $D_0^{an}$, therefore we have
$$\|p_k(x_2,z)\|_{B_1}\leq \|p_k(x_2,z)\|_{D}=\|p_k(x_2,\g(x_2))\|_{D_0^{an} }\leq A \, B^k \, k!,\quad \forall k\in \mathbb{N}.$$
We claim that, up to replacing $D_0$ by a smaller closed ball, we have 
$$
\|p_k(x_2,z)\|_{B_2} \leq M^{\alpha k}\|p_k(x_2,z)\|_{B_1},
$$
where $M >0$. So we conclude that
\[
\|p_k(x_2,\g'(x_2))\|_{D_0} \leq \|p_k(x_2,z)\|_{B_2} \leq M^{\alpha k}\|p_k(x_2,z)\|_{B_1}  \leq A \, (M^{\alpha}B)^k \, k!,\quad \forall k\in \mathbb{N}
\]
which implies that $P(\x,\g'(x_2))\in \KK \{ x_1,x_2\}$ as we wanted to prove.

\bigskip
\noindent
\emph{Proof of the Claim:} 
Since $\mathcal{C}$ is an affine algebraic curve, it admits a compactification ${\mathcal{C}}'$ in the projective   space $\mathbb{P}^n_\KK$. Denote by $\pi: \wdt{\mathcal{C}} \to \mathcal{C}'$ the normalization, and note that $\wdt{\mathcal{C}}$ is a projective smooth curve. We still denote by $\pi$ the analytic map induced by $\pi$ between the analytification of $\wdt{\mathcal C}$ and $\mathcal C'$. We denote by $\wdt B_j = \pi^{-1}(B_j)$ for $j=1,2$. Consider the pullbacks $f_i:=\pi^*(z_i)$ of the coordinate functions $z_i$ of the affine space $\KK^n$. We denote by $q_1$, \ldots, $q_s \in \wdt{\mathcal{C}}$ the poles of the $f_i$ and we denote by $m_j$ the maximal multiplicity of the poles $q_j$ for the functions $f_1$, \ldots, $f_n$. Now, consider the divisor:
\[
D = m \,q_0 - \sum_{j=1}^s m_j q_j
\]
where $q_0\in\wdt{\mathcal C}$ an interior point of $\wdt{B}_1$ and $m = \sum_{j=1}^s m_j + \mbox{genus}(\wdt{\mathcal{C}})$. By the Riemann-Roch Theorem, there exists a rational function $h$ defined over $\wdt{\mathcal{C}}$ such that $(h) + D$ is an effective divisor, where $(h)$ denotes the divisor associated to $h$. Apart from enlarging $B_2$, we may suppose that $h$ is never zero over $\partial \wdt{B}_2$. If $\KK=\C$, it is always possible to enlarge $B_2$. If $\KK$ is non-archimedean, we may replace $D_0$  and $B_2$ by smaller ball, in such a way that we can enlarge $B_2$.

Let $d$ be the degree of $p_k$. Then $(p_k\circ\pi) \cdot h^d$ induces an analytic function on $\wdt{\mathcal{C}}^{an}\setminus\{q_0\}$. Therefore, since $h$ is non-zero over $\partial \wdt{B}_2$,  Lemma \ref{lem:max_principle} (or the Maximum principle when $\KK=\C$) given below implies that:
\[
\|p_k\|_{B_2} = \|p_k \circ \pi\|_{\wdt{B}_2} = \|p_k \circ \pi\|_{\partial \wdt{B}_2} \leq  \left(\| h^{-1} \|_{\partial \wdt{B}_2}\right)^d \cdot \|h^d \cdot p_k \circ \pi\|_{\partial \wdt{B}_2}
\]
Next, since $(p_k\circ\pi) \cdot h^d$ has no poles in $\ovl C^{an}\setminus \wdt{B}_1$, we conclude via Lemma \ref{lem:max_principle}  that:
\[
\|h^d \cdot p_k \circ \pi\|_{\partial \wdt{B}_2} \leq \|h^d \cdot p_k \circ \pi\|_{\ovl{\mathcal{C}}^{an} \setminus \wdt{B}_1} \leq \|h^d \cdot p_k \circ \pi\|_{ \partial \wdt{B}_1} \leq \left(\|h\|_{ \partial \wdt{B}_1}\right)^d \cdot \|p_k \circ \pi\|_{ \partial \wdt{B}_1}
\]
Finally, by combining the above inequalities, we get:
\[
\begin{aligned}
\|p_k\|_{B_2} 
&\leq \left( \| h^{-1} \|_{\partial \wdt{B}_2} \cdot \|h\|_{ \partial \wdt{B}_1} \right)^d\cdot \|p_k \circ \pi\|_{ \partial \wdt{B}_1}\\ &\leq  \left( \| h^{-1} \|_{\partial \wdt{B}_2} \cdot \|h\|_{ \partial \wdt{B}_1} \right)^d\cdot \|p_k\|_{B_1}.
\end{aligned}
\]
So the Claim is proved with $M = \| h^{-1} \|_{\partial \wdt{B}_2} \cdot \|h\|_{ \partial \wdt{B}_1}$. 
\end{proof}

\begin{remark}
The proof of the previous proposition (especially the fact that convergent power series satisfy Axiom  \ref{axiom:last}  of Definition \ref{temperate_fam})
works over any algebraically closed complete valued field $\KK$ of characteristic zero. But when $\KK$ is archimedean, that is $\KK=\C$, a different proof is given by \cite[Lemma 5.36]{BCR}.
\end{remark}

We warmly thank Charles Favre for providing us the sketch of the proof of the following Lemma in a private communication.

\begin{lemma}\label{lem:max_principle}
Let $\KK$ be a non-archimedean complete valued field of characteristic zero that is algebraically closed.
Let $U$ be a connected open subset of a smooth projective analytic curve $\mathcal C$. Let  $f$ be an analytic function on $U$. If $|f|_{|_U}$ has a maximum at a point of $U$ then $f$ is constant on $U$. Moreover if $f$ extends continuously on $\ovl U$,  then $|f|$ attains its maximum on $\ovl U$ at a point of  $\partial U=\ovl U\setminus U$.
\end{lemma}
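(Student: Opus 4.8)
The plan is to prove this as a consequence of the standard theory of Berkovich curves, most notably the maximum modulus principle for analytic functions on such curves together with the local structure of type-2 points as neighborhoods of Berkovich lines. First I would reduce to the affinoid/local setting: since $\mathcal{C}$ is a smooth projective analytic curve and $U$ is a connected open subset on which $f$ is analytic, I would cover $U$ by a family of affinoid domains and note that it suffices to establish the statement on each piece, gluing via connectedness. The key input I would invoke is that on a smooth Berkovich curve the sheaf of analytic functions satisfies a maximum principle: on an affinoid domain the supremum of $|f|$ is attained on the (finite) Shilov boundary. From this, if $|f|$ attains an interior maximum at a point $p\in U$, I would choose a small affinoid (or strictly affinoid) neighborhood $V\ni p$ whose Shilov boundary avoids $p$ — possible precisely because type-1 points (and more generally non-type-2 points) are not Shilov points, while at a type-2 point one can shrink $V$ so that its Shilov boundary consists of type-2 points distinct from $p$ — and derive that $|f|$ is constant equal to $|f(p)|$ on $V$.

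The heart of the argument is then upgrading ``$|f|$ locally constant'' to ``$f$ locally constant.'' For this I would use the residue curve / reduction at a type-2 point: shrinking $V$, the reduction of $f$ (after scaling) is a rational function on the residue curve $\widetilde{V}$, which is a smooth projective curve over the residue field $\widetilde{\KK}$; the condition that $|f|\equiv|f(p)|$ on $V$ forces this reduction to have no zeros and no poles, hence to be a nonzero constant. A standard successive-approximation argument (writing $f = c(1+g)$ with $|g|<1$ on $V$ and iterating the reduction) then shows $f$ is constant on $V$. Propagating this along $U$ by connectedness — the set where $f$ agrees with $f(p)$ is open by the above and closed by continuity of $f$ — yields that $f$ is constant on all of $U$.

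For the final sentence, assume $f$ extends continuously to $\overline{U}$ and is nonconstant (else trivial). Since $\overline{U}$ is compact (a closed subset of the compact space $\mathcal{C}$), $|f|$ attains its maximum on $\overline{U}$ at some point $q$. If $q\in U$, the first part forces $f$ constant on $U$, hence on $\overline{U}$ by continuity, a contradiction. Therefore $q\in\partial U=\overline{U}\setminus U$, as claimed.

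\textbf{Main obstacle.} The delicate step is the passage from $|f|$ locally constant to $f$ locally constant at a type-2 point, i.e. correctly handling the reduction map and ensuring one can shrink the neighborhood so that its Shilov boundary misses the chosen point while keeping the reduction argument valid; this requires care with the geometry of type-2 points and the Riemann–Hurwitz/genus bookkeeping for the residue curves, and is where the hypothesis that $\KK$ is algebraically closed of characteristic zero (so residue fields are nice and no wild ramification occurs) is genuinely used.
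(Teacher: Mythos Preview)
Your outline is sound and reaches the same conclusion, but the route at type-2 points differs from the paper's. The paper splits on the type of the maximum point $x$. For types 1, 3, 4 it invokes \cite[Cor.~4.27]{BPR} to find a neighborhood of $x$ that is an open ball or annulus sitting inside a Berkovich line, and then applies the maximum principle for domains in the Berkovich line \cite[Prop.~3.4.5]{BR} directly. For type 2 it does \emph{not} use Shilov boundaries or reduction: it sets $F=-\log|f|$ and appeals to the slope formula \cite[Thm.~5.15(4)]{BPR} (the outgoing slopes of $F$ at $x$ sum to zero, and the extremum forces them all to vanish), then uses piecewise linearity of $F$ on a skeleton $\Sigma$ through $x$ together with the retraction identity $F=F\circ\tau$ \cite[Thm.~5.15(1),(2)]{BPR} to conclude that $|f|$ is locally constant near $x$. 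Your Shilov-boundary plus residue-curve argument is a legitimate alternative, and you are right to flag the upgrade from ``$|f|$ locally constant'' to ``$f$ locally constant'' as the crux; the paper is in fact terse at precisely this transition.

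One caution with your reduction step: the reduction of a small affinoid neighborhood $V$ is an \emph{affine} curve over the residue field, not a projective one, so ``no zeros and no poles implies constant'' is not immediate --- nonconstant units abound on affine curves. You would need either to arrange that the relevant reduction is the full (projective) residue curve at the type-2 point, or to observe that any open neighborhood of $x$ contains type-1 points and fall back on the ball/annulus case there. The paper's harmonicity argument on the skeleton sidesteps this issue entirely, which is what it buys; your approach, once patched, has the advantage of making the step from $|f|$ constant to $f$ constant explicit.
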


\begin{proof}
 Let $x$ be a point of $U$ at which $|f|$ has a local maximum, and assume that $f$ is not constant. If $x$ is of type 1, 3 or 4, then $x$ has a neighborhood $D$ included in $U$ that is an open ball or an open annulus  (see \cite[Corollary 4.27]{BPR}). Therefore we may assume that $D$ is in the Berkovich line, and we get a contradiction by the Maximum Principle for domains in the Berkovich line (cf. \cite[Proposition 3.4.5]{BR}). If $x$ is of type 2, we set $F:=-\log|f|$. Then, by \cite[Theorem 5.15 (4)]{BPR}, the sum of the outgoing slopes of $F$ at $x$ equals 0. But since $x$ is a local maximum of $F$, then all the outgoing slopes of $F$ are zero. Let $\Sigma$ be the skeleton of $U$ with respect to some semistable vertex set of $U$ containing $x$ (see \cite[Definitions 3.1 \& 3.3]{BPR}). By  \cite[Theorem 5.15 (2)]{BPR}, $F$ is piecewise linear on $\Sigma$. Therefore, $F_{|_{\Sigma_x}}$ is constant, where $\Sigma_x$ denotes the connected component of $\Sigma$ containing $x$. Therefore  $F$ is locally constant at $x$ because, by \cite[Theorem 5.15 (1)]{BPR}, $F=F\circ \tau$ where $\tau:U\lgw \Sigma$ is the retraction of $U$ onto its skeleton $\Sigma$. This is a contradiction, thus $f$ is constant. Since $\ovl U$ is compact, $|f|$ has a maximum on it. If the maximum is attained at a point of $U$, then $|f|$ is constant on $U$, thus on $\ovl U$ by continuity. Thus in any case the maximum is attained at a point of $\partial U$. 
\end{proof}

\subsubsection{Proof of Corollary \ref{thm:Gabrielov_convergent}}

We just need to prove the following result (the reduction of Corollary \ref{thm:Gabrielov_convergent} to the following result follows the lines of Section \ref{red:sec}, see Remark \ref{rk:red1}, because the family of convergent power series over a characteristic zero complete field is a Weierstrass family by Proposition \ref{prop:cvgt-W}):
 \begin{theorem}\label{thm:red1_Gab}
Let $\phi:\KK\{ x_1,\ldots,x_n, y\}\lgw \KK\{ u_1,\ldots, u_n\}$ be a morphism of convergent power series rings over a characteristic zero complete field $\KK$ such that
\begin{enumerate}
\item[i)]  The kernel of $\wdh\phi$ is generated by one Weierstrass polynomial $P\in\KK\lb \x\rb[y]$.
\item[ii)] $\Gr(\phi)=\Fr(\phi)=n$.
\end{enumerate}
Then  $P\in\KK\{\x\}[y]$.
\end{theorem}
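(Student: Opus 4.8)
The plan is to reduce Theorem~\ref{thm:red1_Gab} to Theorem~\ref{thm:red1} by base change to the completed algebraic closure of $\KK$, and then to descend. If the absolute value on $\KK$ is trivial, then $\KK\{\vb\}=\KK\lb\vb\rb$ for every tuple of indeterminates $\vb$, so $\phi=\wdh\phi$, $\Ker\phi=(P)$, and $P\in\KK\{\x\}[y]$ trivially; hence we may assume $|\cdot|$ is non-trivial, so that $\KK$ is uncountable. Let $\LL$ be the completion of an algebraic closure $\ovl\KK$ of $\KK$ with respect to the unique extension of $|\cdot|$; then $\LL$ is an uncountable algebraically closed complete valued field of characteristic zero, so by Proposition~\ref{prop:cvgt-W} the family $(\LL\{x_1,\ldots,x_n\})_n$ is W-temperate. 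Since $\KK\{\vb\}\subseteq\LL\{\vb\}$ for every $\vb$ (the defining inequalities involve only absolute values, which are preserved under the extension), $\phi$ induces a morphism $\ovl\phi:\LL\{\x,y\}\lgw\LL\{\ub\}$ of rings of W-temperate power series, defined by the same series $\phi_i(\ub)$, and $\wdh{\ovl\phi}$ extends $\wdh\phi$.

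Next I would check that $\ovl\phi$ satisfies the hypotheses of Theorem~\ref{thm:red1}. The Jacobian matrix of $\ovl\phi$ equals that of $\phi$, and the rank of a fixed matrix is unchanged under the field extension $\Frac(\KK\{\ub\})\hookrightarrow\Frac(\LL\{\ub\})$, so $\Gr(\ovl\phi)=\Gr(\phi)=n$. Moreover $\wdh{\ovl\phi}(P)=\wdh\phi(P)=0$, so $P\in\Ker(\wdh{\ovl\phi})$, which is a nonzero prime ideal of $\LL\lb\x,y\rb$ because the image of $\wdh{\ovl\phi}$ is a subring of the domain $\LL\lb\ub\rb$; hence $\dim\bigl(\LL\lb\x,y\rb/\Ker(\wdh{\ovl\phi})\bigr)\leq n$, and combined with $\Gr(\ovl\phi)\leq\Fr(\ovl\phi)$ (Lemma~\ref{lem:Basic}) this gives $\Fr(\ovl\phi)=n$ and $\het(\Ker(\wdh{\ovl\phi}))=1$. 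Now $P$ is irreducible in $\KK\lb\x\rb[y]$ (since $(P)=\Ker\wdh\phi$ is prime and $P$ is a Weierstrass polynomial), in particular reduced, and being reduced is preserved under field extension; thus the factorization $P=P_1\cdots P_t$ into monic irreducible — hence Weierstrass — polynomials of $\LL\lb\x\rb[y]$ has pairwise distinct factors, and the height one prime $\Ker(\wdh{\ovl\phi})$, containing $\prod_i P_i$, must equal one of the $(P_i)$. Relabeling so that $\Ker(\wdh{\ovl\phi})=(P_1)$, Theorem~\ref{thm:red1}, applied to $\ovl\phi$ over the W-temperate family $(\LL\{\x\})_n$, yields $P_1\in\LL\{\x\}[y]$.

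The remaining step is a Galois descent. The group $G=\operatorname{Gal}(\ovl\KK/\KK)$ acts on $\ovl\KK$ by isometries — the extension of $|\cdot|$ to $\ovl\KK$ being unique — hence extends to an action on $\LL$ by isometric automorphisms, and so acts coefficient-wise on $\LL\lb\x\rb$, stabilizing the subring $\LL\{\x\}$; by the Ax--Sen--Tate theorem $\LL^G=\KK$ (using that $\KK$ is closed in $\LL$), whence $(\LL\lb\x\rb)^G=\KK\lb\x\rb$. Extending the action to $\LL\lb\x\rb[y]$, each $\tau\in G$ fixes $P$ and therefore permutes $\{P_1,\ldots,P_t\}$, and this permutation action is transitive: otherwise the product of the $P_i$ over a proper orbit would be a $G$-invariant monic polynomial with coefficients in $\KK\lb\x\rb$ properly dividing $P$ (dividing $P$ over $\LL\lb\x\rb[y]$, hence over $\KK\lb\x\rb[y]$ by uniqueness of monic division), contradicting the irreducibility of $P$ in $\KK\lb\x\rb[y]$. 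Hence for each $i$ there is $\tau_i\in G$ with $\tau_i(P_1)=P_i$; since $\tau_i$ stabilizes $\LL\{\x\}$ we get $P_i\in\LL\{\x\}[y]$, so $P=\prod_i P_i\in\LL\{\x\}[y]$. As $P\in\KK\lb\x\rb[y]$, we conclude $P\in\bigl(\KK\lb\x\rb\cap\LL\{\x\}\bigr)[y]=\KK\{\x\}[y]$, as desired.

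I expect the main obstacle to lie in the descent, and precisely in its two inputs: that the fixed field of $\operatorname{Gal}(\ovl\KK/\KK)$ acting on $\LL$ is $\KK$ (the Ax--Sen--Tate theorem, classical for the fields $\C_p$, $\Q_p$ and $\k(\!(t)\!)$ of interest), and that Galois automorphisms extend to isometric automorphisms of $\LL$, so that they preserve the convergence condition coefficient-wise. Granting these, the transitivity of the $G$-action on the irreducible factors of $P$ and the conclusion are short, and the verification of the hypotheses of Theorem~\ref{thm:red1} for $\ovl\phi$ is routine.
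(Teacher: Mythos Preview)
Your approach is essentially the same as the paper's: base change to the completed algebraic closure $\LL=\widehat{\ovl\KK}$, apply the W-temperate theory there, and then descend via the elementary identity $\LL\{\x\}\cap\KK\lb\x\rb=\KK\{\x\}$. The paper is extremely terse at the key step, simply writing ``we can apply Theorem~\ref{thm:TemperateRank} to the W-temperate family $(\ovl\KK\{x_1,\ldots,x_n\})_n$. This shows that $P\in\ovl\KK\{\x\}[y]$''; you correctly notice that $P$ might in principle factor over $\LL$, so that Theorem~\ref{thm:red1} a priori only yields convergence of a single irreducible factor $P_1$, and you fill this gap with a Galois descent (transitivity on the factors via Ax--Sen--Tate). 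Your argument is correct.

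One simplification is worth noting: since the defining series $\phi_i(\ub)$ have coefficients in $\KK$, the action of $G=\Gal(\ovl\KK/\KK)$ commutes with $\wdh{\ovl\phi}$, so $\Ker(\wdh{\ovl\phi})$ is itself $G$-stable; its Weierstrass generator $P_1$ is therefore $G$-fixed, hence (by Ax--Sen--Tate) lies in $\KK\lb\x\rb[y]$, and irreducibility of $P$ forces $P_1=P$. In other words, $P$ stays irreducible over $\LL$, and one can apply Theorem~\ref{thm:red1} directly to $\ovl\phi$ with kernel generator $P$, bypassing the transitivity-on-factors argument entirely. Either way, the input you flag---that $G$ acts isometrically on $\LL$ with fixed field $\KK$---is exactly what is needed, and it does hold for complete valued fields of characteristic zero.
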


\begin{proof} We denote by $\ovl\KK$ the completion of an algebraic closure of $\KK$. This is an algebraically closed field.
Therefore we can apply Theorem \ref{thm:TemperateRank} to the W-temperate family $(\ovl\KK\{x_1,\ldots, x_n\})_n$. This shows  that $P\in\ovl\KK\{\x\}[y]$. Therefore $P\in\ovl\KK\{\x\}[y]\cap\KK\lb\x\rb[y]=\KK\{\x\}[y]$.
\end{proof}




\subsection{Eisenstein power series}\label{ssec:Eisenstein}


Let $\O$ be a UFD, and let $\KK$ be an algebraic closure of its fraction field. The ring of Eisenstein series over $\O$ is the filtered limit of rings:
$$\bigcup_{\mathfrak c\in\KK}\bigcup_{f\in \O\setminus\{0\}} \O_f\lb x_1,\ldots, x_n\rb[\mathfrak c]$$
where $\O_f$ denotes the localization of $\O$ with respect to the multiplicative family $\{1,f,f^2,\ldots,\}$. 

The main result of this subsection is the following:

\begin{proposition}\label{prop:EisensteinIsTemperate}
If $\O$ is a  UFD containing an uncountable characteristic zero field $\k$, the ring of Eisenstein series is a W-temperate family over $\KK$.
\end{proposition}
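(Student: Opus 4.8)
The plan is to verify the five axioms of Definition~\ref{W-fam} together with the three axioms of Definition~\ref{temperate_fam} for the family of Eisenstein power series rings. First I would fix notation: write $\O\nl \x\nr$ for the ring of Eisenstein series over $\O$ in variables $\x=(x_1,\ldots,x_n)$, so that an element lies in $\O_f\lb \x\rb[\mathfrak c]$ for some $f\in\O\setminus\{0\}$ and some $\mathfrak c\in\KK$. The axioms \ref{axiom:inclusions}, \ref{axiom:intersection_n+m}, \ref{axiom:permutation}, \ref{axiom:unit} of Definition~\ref{W-fam} are essentially formal consequences of the filtered-limit structure: an Eisenstein series with nonzero constant term is a unit after possibly enlarging $f$ (inverting the constant term's denominator-cleared numerator in $\O$) and inverting finitely many coefficients, and stability under permutation and under the intersection with $\KK\lb\x\rb$ is immediate because both the denominator $f$ and the algebraic element $\mathfrak c$ are unchanged. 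Weierstrass division, axiom~\ref{axiom:weierstrass}, is the substantive classical input; I would invoke the known Weierstrass division theorem for Eisenstein series (as in Zariski's work and its later treatments, cf. \cite{ZarEisenstein,HirEqui,PP21}), which says precisely that if $F$ is $x_n$-regular then division can be performed within $\O_f\lb\x\rb[\mathfrak c]$ after enlarging $f$ and $\mathfrak c$. This establishes that Eisenstein series form a Weierstrass family over $\KK$.

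Next I would turn to the temperate axioms. Axiom~\ref{axiom:monomialcomposition} (closure by local blowing-down) is straightforward: if $f(\x',x_1x_n)$ lies in $\O_f\lb\x\rb[\mathfrak c]$, then comparing coefficients shows the coefficients of $f(\x)$ already lie in $\O_f[\mathfrak c]$, so $f\in\O_f\lb\x\rb[\mathfrak c]$. For axiom~\ref{axiom:Abhyankar-Moh} (closure by generic hyperplane sections), I would argue as in the algebraic power series case treated just above in the excerpt: suppose $W=\{\la\in\KK\mid F(\x',\la x_1)\in \O\nl\x'\nr\}$ is co-countable and derive that $F$ itself is Eisenstein. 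The point is that $\k$ is uncountable while for each $\la\in W$ the condition $F(\x',\la x_1)\in \O_{f_\la}\lb\x'\rb[\mathfrak c_\la]$ forces a bounded-denominator, bounded-algebraic-degree relation; since there are only countably many pairs $(f,\mathfrak c)$ up to the relevant equivalence once we fix a countable subfield generated by the coefficients of $F$, a Baire/cardinality argument (together with a linear-algebra descent over the countable field, exactly mirroring the treatment of $\KK\langle\x\rangle$) produces a single $f$ and $\mathfrak c$ working for $F$.

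The main obstacle, and the step I would spend the most care on, is axiom~\ref{axiom:last} (temperateness): given the polynomial family $P(\x,z)=\sum_k x_1^k p_k(x_2,z)$ with $\deg_{x_2}p_k\leq \alpha k$, and roots $\g,\g'$ of an irreducible $\G(t,z)\in\KK[t,z]$ monic in $z$ and split in $\O\nl t\nr$, one must show that $P(\x,\g(x_2))$ Eisenstein implies $P(\x,\g'(x_2))$ Eisenstein. I would follow the strategy used for algebraic power series: replace each $p_k$ by its remainder $r_k$ modulo $\G$ in the $z$-variable, using Lemma~\ref{lemma:div-linear-bound} to keep $\deg_{x_2}r_k\leq ak$ linearly bounded, so that $P(\x,\g)=Q(\x,\g)$ and $P(\x,\g')=Q(\x,\g')$ for the truncated series $Q$. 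The key observation is that $\g$ and $\g'$ are conjugate over $\KK(t)$, hence their coefficients generate the same finite extension field, and the Eisenstein condition on $Q(\x,\g)$ — a bound on denominators in $\O$ and on the algebraic element $\mathfrak c$ controlling the coefficients — is symmetric under the Galois action exchanging $\g$ and $\g'$. Concretely, writing the coefficients of $Q$ with respect to the power basis $1,\g,\ldots,\g^{d-1}$, I would track how a single denominator $f\in\O$ and a single primitive element $\mathfrak c$ (which can be taken to be a primitive element for the splitting field of $\G$ over $\KK$) suffice simultaneously for all conjugates; the linear bound $\deg_{x_2}r_k\leq ak$ guarantees that the substitution $x_2\mapsto$ the monomial change matches the shape required by Definition~\ref{temperate_fam}\ref{axiom:last}. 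Once the symmetry is made precise, $Q(\x,\g')\in\O_f\lb\x\rb[\mathfrak c']$ follows, hence so does $P(\x,\g')=Q(\x,\g')$, completing the verification and the proof.
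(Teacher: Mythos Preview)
Your overall architecture is right, but the two substantive axioms are handled too loosely, and in one case the gap is real.

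For axiom~\ref{axiom:last}, your claim that ``the Eisenstein condition \ldots\ is symmetric under the Galois action exchanging $\g$ and $\g'$'' is precisely the nontrivial point, and it is \emph{not} automatic. Even after reducing to the remainders $r_k$ with $\deg_z r_k<d$ and writing $Q(\x,\g)=\sum_j q_j(\x)\g^j$ in the power basis, you cannot conclude that the $q_j$ individually lie in $\O_f\lb\x\rb[\mathfrak c]$ from the single hypothesis that $Q(\x,\g)$ does: a Vandermonde argument would need all conjugate values $Q(\x,\g_i)$ simultaneously, which is what you are trying to prove. What is actually required is a uniform statement: there exists $f\in\O$, depending only on $\G$, such that for \emph{every} $Q\in\LL[t,z]$ one has $Q(t,\g_1)\in A[\mathfrak c]\lb t\rb\Rightarrow Q(t,\g_2)\in A_f[\mathfrak c]\lb t\rb$. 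The paper isolates this as a separate theorem (Theorem~\ref{thm_galois}), and its proof is genuinely delicate: one reduces to controlling the finitely many primes $g$ modulo which $\G$ becomes reducible, and this finiteness in turn rests on the fact (Lemma~\ref{finite_reduction}) that a transcendental algebraic power series in $A\lb t\rb$ can be congruent to a polynomial modulo only finitely many primes of $A$. The necessity of enlarging the denominator is real: already for $\G=z^2-(1+ft)$ and $Q=\frac1f(1-z)$ one has $Q(\g_1)\in A\lb t\rb$ but $Q(\g_2)\in A_f\lb t\rb\setminus A\lb t\rb$. Your sketch does not supply this mechanism.

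For axiom~\ref{axiom:Abhyankar-Moh}, your cardinality sketch is also problematic as stated: in a general UFD $\O$ there can be uncountably many non-associate primes, so ``countably many pairs $(f,\mathfrak c)$'' is not available. The paper instead proves directly (Lemma~\ref{lem:reduction}) that the set of $\la\in\k$ with $F(\x',\la x_1)$ Eisenstein is countable when $F$ is not, via an elementary gcd argument: for each homogeneous piece $P_{k,\a,d}(x_1,x_n)/g_{k,\a,d}$ of $F$, the set of $\la$ for which $\gcd(P_{k,\a,d}(1,\la),g_{k,\a,d})\neq1$ is finite, and taking a countable intersection of cofinite sets one finds a $\la$ forcing all denominators $g_{k,\a,d}$ to divide a single $f$. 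This is a different, and correct, route.

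Finally, for Weierstrass division the paper does not cite the literature but gives a short self-contained argument: after inverting the leading coefficient of $F(0,x_n)$ one runs the usual inductive division algorithm with respect to a weighted lex order, observing that no new denominators are introduced because the initial coefficient of $F$ has become a unit in $\O_f[\mathfrak c]$.
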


\begin{proof}
Axioms \ref{axiom:inclusions} \ref{axiom:intersection_n+m} \ref{axiom:permutation} \ref{axiom:unit} of Definition \ref{W-fam} are easily verified. 

\medskip
\noindent
\textbf{Axiom \ref{axiom:weierstrass} of Definition \ref{W-fam}:} consider $F$ and $G\in\O_f\lb\x\rb[\mathfrak c]$ as in the statement of Axiom \ref{axiom:weierstrass}. We have $F(0,x_n)=x_n^du(x_n)$. If we multiply $f$ by $u(0)$, we may assume that $u(x_n)$ is a unit in $\O_f[\pc]\lb x_n\rb$. Let $\LL$ be the fraction field of $\O_f[\pc]$. By the Weierstrass division theorem for power series in $\LL\lb \x\rb$, $G=QF+R$ where $Q\in\LL\lb \x\rb$ and $R\in\LL\lb \x'\rb[x_n]$ and $\deg_{x_n}(R)<d$. We claim that the coefficients of $Q$ and $R$ are also in $\O_f[\pc]$. Indeed, fix the following order on the monomials: We have
$\x^\a<\x^\b$  if  
$$
(\a_1+\cdots+\a_{n-1}+(d+1)\a_n, \a_1,\ldots, \a_n)<_{\text{lex}}(\b_1+\cdots+\b_{n-1}+(d+1)\b_n, \b_1,\ldots, \b_n)
$$
where $<_{\text{lex}}$ denotes the lexicographic order. In particular the nonzero monomial of least order in the expansion of $F$ is $Cx_n^d$ where $C$ is a unit in $\O_f[\pc]$. For a series $H\in\LL\lb\x\rb$ we denote by $\ini(H)$ the monomial of least weight in the expansion of $H$. 

We now consider an inductive way to construct the unique coefficients $Q$ and $R$. We start by setting $G^{(0)}=G$, $Q^{(0)}=0$ and $R^{(0)}=0$. Fix $k\geq 0$, and assume that  $Q^{(\ell)}$ and $R^{(\ell)}$ have been constructed for every $\ell\leq k$  in such a way that
$G^{(\ell)}=G-FQ^{(\ell)}-R^{(\ell)}$ satisfies $\ord(G_{\ell+1})\geq \ord(G_{\ell})$. We consider the two following cases:
\begin{enumerate}
\item[i)] If $\ini(G^{(k)})$ is divisible by $x_n^d$, we set $R^{(k+1)}:=R^{(k)}$ and $Q^{(k+1)}:=Q^{(k)}+\fract{\ini(G^{(k)})}{\ini(F)}$.
\item[ii)] If $\ini(G^{(k)})$ is not divisible by $x_n^d$, we set $R^{(k+1)}:=R^{(k)}+\ini(G^{(k)})$ and $Q^{(k+1)}:=Q^{(k)}$.
\end{enumerate}
By the formal Weierstrass division Theorem, this process converges as $G^{(k)}\lgw 0$, $Q^{(k)}\lgw Q$ and $R^{(k)}\lgw R$ when $k\lgw\infty$. But we see that we do not need to introduce elements of $\LL$ that does not belong to $\O_f[\pc]$ because the coefficient of initial term of $F$ is a unit in $\O_f[\pc]$. This proves the claim.

\medskip
\noindent
\textbf{Axiom \ref{axiom:monomialcomposition} of Definition \ref{temperate_fam}:} This property is easily verified.

\medskip
\noindent
\textbf{Axiom \ref{axiom:Abhyankar-Moh} of Definition \ref{temperate_fam}:} The property follows from the following Lemma, which is stronger version of the axiom valid for Eisenstein power series:

\begin{lemma}\label{lem:reduction}
In the conditions of the statement of Proposition \ref{prop:EisensteinIsTemperate}, consider $F\in\mathbb \KK\lb \x \rb$ and assume that $F\notin \KK\nl \x\nr$. Then the following set is countable
\[
W:=\left\{\la\in \k\mid  F(\x', \lambda x_1)\in \KK\nl \x'\nr\right\}.
\]
\end{lemma}
\begin{proof}
We start by a general claim. Let $P(x_1,x_2)\in \O[x_1,x_2]$ be a homogeneous polynomial. Write
\[
P=\sum_{k=0}^dp_{k}x_1^{d-k}x_2^k,
\]
so that $P(x_1, \la x_1)=\left(\sum_k p_k\la^k\right)x_1^d$. Let $g\in \O$, $g\neq 0$; we claim that if  $\gcd(p_k, k=0,\ldots, d)=1$, then $\gcd\left(\sum_k p_k\la^k, g\right)\neq 1$ for at most finitely many $\la\in \k$. Indeed, assume that $\gcd\left(\sum_k p_k\la^k, g\right)\neq 1$ for infinitely many $\la\in \k$. Since $g$ has finitely many factors, this implies that $g$ has an irreducible factor $h$ such that, for infinitely many $\la\in \k$,
$h$ divides $\sum_k p_k\la^k$. Hence the polynomial $Q(T):=\sum_kp_kT^k\in \Frac(\fract{\O}{(h)})[T]$ has infinitely many roots in $\k$, which is  possible only if $h$ divides all the $p_k$ since $\Frac(\fract{\O}{(h)})$ is an infinite field (it is a  field containing $\k$). This contradicts the hypothesis, proving the Claim.

Now, we prove the contrapositive of the Lemma, that is, consider an element $F\in\mathbb \KK\lb \x\rb$ such that 
\[
W:=\left\{\la\in \k\mid  F(\x',\la x_1)\in \KK\nl \x'\nr\right\}
\]
is uncountable, and let us prove that $F\in\mathbb \KK\nl \x\nr$. Let $\LL$ be the fraction field of $\O$. Since $F$ has countably many coefficients, the field extension of $\LL$ generated by the coefficients of $F$ is a $\LL$-vector space of countable dimension. Let $(\pc_k)_{k\in\N}$ be a $\LL$-basis of this vector space, so
$$
F(\x)=\sum_{k\in\N}\pc_kF_k(\x)
$$ 
where the $F_k(\x)$ are in $\LL\lb \x\rb$ and, for each $\a\in\N^n$, the coefficient of $\x^\a$ is zero in all but finitely many $F_k(\x)$. Moreover, we can write
$$
F_k(\x)=\sum_{\a \in\N^{n-2}}\left(\sum_{d\in\N}\frac{P_{k,\a,d}(x_1,x_n)}{g_{k,a,d}}\right) x_2^{a_2}\cdots x_{n-1}^{\a_{n-2}}
$$
where the $P_{k,\a,d}\in \O[x_1,x_n]$ are homogeneous polynomials of degree $d$, and $g_{k,\a,d}$ is coprime with the gcd of the coefficients of $P_{k,\a,d}$. Now $\gcd(P_{k,\a,d}(1,\la), g_{k,\a,d})=1$ for $\la\in E_{k,\a,d}$ where $E_{k,\a,d}\subset W$ is cofinite by the Claim. Thus the complement of the set $E:=\cap_{k,\a,d} E_{k,\a,d}$ in $\k$ is at most countable. Therefore $E\cap W\neq 0$ if $\k$ is uncountable. Hence, by choosing $\la\in E\cap W$, there is $f\in \O$ such that $F_k(\x',\la x_1)\in \O_f\lb \x'\rb$ for every $k$. Then we see that for every $k$, $\a$ and $d$, $g_{k,\a,d}$ divides a power of $f$, whence $F(\x)\in \O_f\lb \x\rb$.
\end{proof}

\medskip
\noindent
\textbf{Axiom \ref{axiom:last} of Definition \ref{temperate_fam}:} The proof of this result is based on the following Galois-type result whose proof we postpone to $\S\S\S$\ref{sssec:Galois}

\begin{theorem}\label{thm_galois}
Let $A$ be a UFD, $\mathbb L$ be its fraction field and $\mathfrak{c}$ be in an algebraic closure of $\mathbb L$ and be separable over $\LL$. Let $\Gamma$ in $A[t,z]$ be irreducible. Assume that $\Gamma$ splits as a polynomial with coefficients in $A[\mathfrak c]\lb t\rb$ and let the $\g_i(t)\in A[\mathfrak c]\lb t\rb$ denote the roots of $\Gamma$. Then there is $f\in A$ such that, for every $Q\in\mathbb L[t,z]$:
 $$
 Q(t,\g_1)\in A[\mathfrak c]\lb t\rb\Longrightarrow Q(t,\g_2)\in A_f[\mathfrak c]\lb t\rb.
 $$
\end{theorem}

We now follow the notation of axiom \ref{axiom:last}. By the definition of Eisenstein power series and the assumption, we know that $P(\x,\g(x_2))\in\O_g[\pc]\lb x_1,x_2\rb$ for some $g\in\O$ and some $\pc \in \KK$; note that $\pc$ is separable over $\KK$ since $\KK$ is of characteristic zero. Moreover, by assumption:
\[
P(\x,z) = \sum_{k \in \mathbb{N}} x_1^k p_k(x_2,z)
\]
where $p_k(x_2,z)$ are polynomials such that $\deg_{x_2}$ is bounded by a liner function in $k$. Since $P(\x,\g(x_2))\in\O_g[\pc]\lb x_1,x_2\rb$, we conclude that $p_k(x_2,\g(x_2)) \in  \O_g[\pc]\lb x_2\rb$. Let $\g'$ be a conjugate root of $\g$. By Theorem \ref{thm_galois} applied to $A=\O_g$, there is $f\in\O$ such that, for every $k\in\N$, $p_k(x_2,\g'(x_2))\in O_{fg} [\pc]\lb x_2\rb$. Thus $P (\x,\g'(x_2))\in\O_{fg}[\pc]\lb \x \rb$, proving that the axiom is verified.
\end{proof}

\begin{remark}
The following example shows that we really need $f$ in the statement of Theorem \ref{thm_galois}. Let $f\in A$ irreducible and let $\Gamma(t,z)=z^2-(1+ft)$. So $\g_1=\sqrt{1+ft}$ and $\g_2=-\sqrt{1+ft}$. For $Q=\frac1f(1-z)$, we have
$$
Q(\g_1)=\frac1f\left(1-\sqrt{1+ft}\right)\in A\lb t\rb,\ \ 
Q(\g_2)=\frac1f\left(1+\sqrt{1+ft}\right)\in A_f\lb t\rb\setminus A\lb t\rb.
$$
\end{remark}


\subsubsection{Algebraic power series with coefficients in a UFD and proof of Theorem \ref{thm_galois}}\label{sssec:Galois}

In this subsubsection, we provide a proof of Theorem \ref{thm_galois}, and we collect results concerning algebraic power series which are of independent interest. We start with a simple Lemma:

\begin{lemma}\label{bound_degree}
Let $\G(t,z)\in A[t,z]$ be a polynomial with coefficients in an integral domain $A$. Let us write $\G=a_0(t)z^d+\cdots+a_d(t)$. Assume that $\G(t,z)$ has a root in $A[t]$ of degree $D$. Then $D\leq \max_i\{\deg_t(a_i(t))\}$.
\end{lemma}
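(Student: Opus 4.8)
The plan is to argue by examining leading coefficients in $t$. Let $\g(t) \in A[t]$ be a root of $\G(t,z)$ of degree $D$, so that
\[
a_0(t)\g(t)^d + a_1(t)\g(t)^{d-1} + \cdots + a_d(t) = 0.
\]
First I would discard the trivial case: if $a_0(t) = 0$ we may replace $\G$ by $\G/z^{j}$ for the largest power $j$ of $z$ dividing $\G$ (equivalently relabel so that the top nonzero coefficient plays the role of $a_0$), since this does not change the set of nonzero roots and can only decrease $\max_i\{\deg_t(a_i)\}$; alternatively one notes $\g=0$ forces $D=0$ and the bound is immediate. So assume $a_0(t)\ne 0$ and $\g(t)\ne 0$.

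Next I would compare the $t$-degrees of the two terms $a_0(t)\g(t)^d$ and $-\bigl(a_1(t)\g(t)^{d-1}+\cdots+a_d(t)\bigr)$, which must be equal since their sum vanishes. Working in the polynomial ring $A[t]$ over an integral domain, leading coefficients multiply, so $\deg_t\bigl(a_0\g^d\bigr) = \deg_t(a_0) + dD$. For the right-hand side, each summand $a_i(t)\g(t)^{d-i}$ has $t$-degree $\deg_t(a_i) + (d-i)D \le \max_j\{\deg_t(a_j)\} + (d-1)D$ for $i \ge 1$, hence the whole sum has $t$-degree at most $\max_j\{\deg_t(a_j)\} + (d-1)D$. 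Equating (or rather, using $\deg_t(\text{LHS}) \le \deg_t(\text{RHS})$ after cancellation, which is all we need) gives
\[
\deg_t(a_0) + dD \;\le\; \max_j\{\deg_t(a_j)\} + (d-1)D,
\]
and since $\deg_t(a_0) \ge 0$, this yields $D \le \max_j\{\deg_t(a_j)\}$, as claimed.

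I do not expect any serious obstacle here; the only point requiring a little care is that cancellation of leading terms cannot occur on the left-hand side in isolation (it is a single monomial in the leading power $t^{\deg_t(a_0)+dD}$ with coefficient the product of the leading coefficient of $a_0$ and the $d$-th power of the leading coefficient of $\g$, nonzero because $A$ is a domain), so the degree of $a_0\g^d$ is exactly $\deg_t(a_0)+dD$ and the displayed inequality genuinely follows from the equation $a_0\g^d = -(a_1\g^{d-1}+\cdots+a_d)$ by taking $t$-degrees of both sides.
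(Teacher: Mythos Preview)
Your proof is correct and follows essentially the same approach as the paper: both isolate the term $a_0(t)\g(t)^d$ and use that, over an integral domain, its $t$-degree is exactly $\deg_t(a_0)+dD$, which strictly dominates every other term $a_i\g^{d-i}$ once $D>\max_i\deg_t(a_i)$. The paper phrases this as a proof by contradiction (if $D>\max_i\deg_t(a_i)$ then $\G(t,\g(t))\neq 0$), whereas you extract the inequality directly, but the content is identical.
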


\begin{proof}
After changing the indices we may assume that $a_0(t)\neq 0$. Let $F(t)\in A[t]$ with $\deg_t(F(t))=D$ and assume that $D>\max_i\{\deg_t(a_i(t))\}$. Then for $i>0$:
$$\deg_t(a_i(t)F(t)^{d-i})\leq \max_j\{\deg_t(a_j(t))\}+(d-i)D<dD\leq\deg_t(a_0(t)F(t)^d).$$
Therefore $\G(t,F(t))\neq 0$.
\end{proof}

The next Lemma shows that the coefficients of an algebraic infinite series over an UFD satisfies strong relations:


\begin{lemma}\label{finite_reduction}
Let $A$ be a UFD, $\mathfrak c$ in an algebraic closure of $\Frac(A)$ be finite and separable  over $A$. Let $\mathcal P$ be a representative family of primes of $A$ (i.e. each principal prime ideal of $A$ is generated by a unique element of $\mathcal{P}$) and $F(t)\in A[\mathfrak c]\lb t\rb\setminus A[\mathfrak c,t]$ be algebraic over $A[t]$. Then the following set is finite
$$
\left\{g\in \mathcal P\mid F(t)\in A[\mathfrak c,t]+(g)A[\mathfrak c]\lb t\rb\right\}.
$$
\end{lemma}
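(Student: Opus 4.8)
The plan is to translate the hypothesis ``$F(t)$ is algebraic over $A[t]$'' into a single polynomial identity with coefficients in $A$, and then track how reduction modulo a prime $g \in \mathcal{P}$ interacts with this identity. First I would fix a minimal polynomial relation: since $F(t) \in A[\mathfrak c]\lb t\rb$ is algebraic over $A[t]$ and $\mathfrak c$ is separable of degree, say, $e$ over $\Frac(A)$, I can write $F(t) = \sum_{j=0}^{e-1} F_j(t)\mathfrak c^j$ with $F_j(t) \in \Frac(A)\lb t\rb$, and each $F_j$ is algebraic over $A[t]$ as well (one extracts the $F_j$ from $F$ by applying the Galois conjugations of $\mathfrak c$ and solving a Vandermonde system, whose determinant is a nonzero element of $\Frac(A)$; after clearing this fixed denominator we may assume the $F_j$ have coefficients in $A_{f_0}$ for one fixed $f_0 \in A$, and by enlarging $\mathcal{P}$-exceptions by the finitely many primes dividing $f_0$ we reduce to $F_j \in A\lb t\rb$). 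So without loss of generality $\mathfrak c$ can be absorbed and we may argue directly: $F(t) \in A\lb t\rb$ is algebraic over $A[t]$, satisfying $P(t, F(t)) = 0$ for some nonzero $P(t,z) = a_0(t)z^d + \cdots + a_d(t) \in A[t,z]$, and $F(t) \notin A[t]$, i.e. $F$ has infinitely many nonzero coefficients.

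Next I would use the key algebraic input: if $F(t) \in A[t] + (g)A\lb t\rb$, write $F(t) = G(t) + g H(t)$ with $G \in A[t]$ and $H \in A\lb t\rb$. By Lemma \ref{bound_degree} applied over the fraction field of $A/(g)$ (which is a domain since $g$ is prime), the image $\bar F$ of $F$ in $(A/(g))\lb t\rb$ is a polynomial root of $\bar P$, hence $\deg_t \bar F \leq N := \max_i \deg_t(a_i)$ — a bound \emph{independent of $g$}. So for every such $g$, the coefficient $F_k \in A$ of $t^k$ in $F$ is divisible by $g$ for every $k > N$. Now pick any single index $k_0 > N$ with $F_{k_0} \neq 0$ (this exists because $F \notin A[t]$ has infinitely many nonzero coefficients). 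Then every $g$ in the set in question divides $F_{k_0}$; since $A$ is a UFD, $F_{k_0}$ has only finitely many prime divisors up to units, and $\mathcal{P}$ contains exactly one representative of each, so the set is finite.

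The main obstacle I anticipate is the bookkeeping in the first paragraph: making the reduction from $A[\mathfrak c]\lb t\rb$ to $A\lb t\rb$ clean, i.e. checking that the separability of $\mathfrak c$ really does let one split $F = \sum F_j \mathfrak c^j$ with each $F_j$ algebraic over $A[t]$ and with denominators controlled by finitely many primes. The Vandermonde/discriminant argument handles this, but one must be careful that the $F_j$ genuinely inherit algebraicity — this follows because each $F_j$ is a $\Frac(A)$-linear combination of the Galois conjugates $\sigma(F) = \sum_j F_j \sigma(\mathfrak c)^j$, and each conjugate $\sigma(F)$ is algebraic over $A[t]$ (it satisfies the conjugate of the equation $P(t,F(t))=0$, with $P$ already having coefficients in $A$, hence fixed). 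Once that reduction is in place, the rest is the short degree-bound argument above. An alternative that avoids splitting off $\mathfrak c$ entirely: work with the domain $B := A[\mathfrak c]$, note $B$ is a finitely generated $A$-module, and run Lemma \ref{bound_degree} over $B/\mathfrak q$ for primes $\mathfrak q$ of $B$ lying over $(g)$; but $B$ need not be a UFD, so controlling ``$g$ divides $F_{k_0}$ in $B$'' still requires descending to $A$, which is essentially the same computation — so I would present the splitting version.
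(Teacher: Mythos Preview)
Your approach is correct and essentially identical to the paper's: both reduce to $F \in A\lb t\rb$ via the Vandermonde system in the conjugates of $\mathfrak c$, then invoke Lemma~\ref{bound_degree} to bound the degree of $\bar F$ modulo $g$. Your final step is in fact a bit cleaner than the paper's: rather than arguing by contradiction with a sequence of primes whose associated degrees $N_n$ must tend to infinity, you observe directly that the degree bound $N = \max_i \deg_t(a_i)$ is uniform in $g$, pick one index $k_0 > N$ with $F_{k_0} \neq 0$, and conclude that every $g$ in the set divides the fixed nonzero element $F_{k_0}$.

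Two small points. First, you must exclude the finitely many primes $g$ for which $\bar P = 0$ in $(A/g)[t,z]$, since Lemma~\ref{bound_degree} requires a nonzero polynomial; the paper does this explicitly. Second, your worry about denominators in the $F_j$ is unnecessary: since $\mathfrak c$ is \emph{integral} over $A$ (``finite over $A$'' in the hypothesis), $A[\mathfrak c]$ is a free $A$-module on $1,\mathfrak c,\ldots,\mathfrak c^{e-1}$, so the decomposition $F = \sum_j F_j \mathfrak c^j$ already gives $F_j \in A\lb t\rb$ with no localization needed. The Vandermonde matrix is only used, as in the paper, to show that each $F_j$ is algebraic over $A[t]$.
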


\begin{proof}
We start by showing that we can reduce the Lemma to the case that $F(t)\in A\lb t\rb\setminus A[t]$, that is, $F$ is independent of $\pc$. Indeed let $e$ denote the degree of $\mathfrak c$ over $A$. Then $F(t)$ can be written in a unique way as
$$
F(t)=F_0(t)+F_1(t)\mathfrak c+\cdots+ F_{e-1}(t)\mathfrak c^{e-1}
$$
where the $F_i(t)$ belong to $A\lb t\rb$. We denote by $\mathfrak c_2$, \ldots, $\mathfrak c_e$ the distinct conjugates of $\mathfrak c$ over $A$.  The power series $F^{(j)}(t)=F_0(t)+F_1(t)\mathfrak c_j+\cdots+ F_{e-1}(t)\mathfrak c_j^{e-1}$ for $j=2$, \ldots, $e$ are the conjugates of $F(t)$ over $A\lb t\rb$, therefore they are also algebraic over $A[t]$. We have
   $$
   \begin{bmatrix} F(t)\\ F^{(2)}(t)\\ \vdots \\ F^{(e)}(t) \end{bmatrix}=\begin{bmatrix}1 & \mathfrak c &\mathfrak c^2 & \cdots &\mathfrak c^{e-1}\\
 1 & \mathfrak c_2 &\mathfrak c_2^2 & \cdots& \mathfrak c_2^{e-1}\\
 \vdots & \vdots & \vdots & \ddots & \vdots\\
 1 & \mathfrak c_e &\mathfrak c^2_e & \cdots &\mathfrak c^{e-1}_e
   \end{bmatrix}\cdot
   \begin{bmatrix} F_0(t)\\F_1(t)\\ \vdots\\ F_{e-1}(t)\end{bmatrix}$$
The Vandermonde matrix is invertible, its entries are algebraic over $A$, thus the entries of its inverse are algebraic over $A$. Therefore the $F_i(t)\in A\lb t\rb$ are algebraic over $A[t]$. Thus, it is enough to prove the lemma for the $F_i(t)$; we may therefore assume that $F(t)\in A\lb t\rb$.
   
Write $F(t)=\sum_{k\in\N} F_kt^k$ with $F_k\in A$ for every $k$. Let $g\in A$ and $N\in \mathbb{N}^{\ast}$. We have that $F(t)$ is equal to a polynomial of degree  $\leq N$ in $t$ modulo $gA\lb t\rb$ if and only if 
$$
\forall k>N,\ \ F_k\in (g).
$$
Therefore for distinct $g_1$, \ldots, $g_s\in\mathcal P$ and because $A$ is a UFD, $F(t)$ is equal to a polynomial of degree $\leq N$ in $t$ modulo every $g_iA\lb t\rb$ if and only if
$$
\forall k>N,\ \ F_k\in (g_1\cdots g_s).
$$
Since $F(t)\notin A[t]$, we conclude that there does not exist an infinite subset $\mathcal G_N \subset \mathcal P$ such that for every $g\in \mathcal G_N$, $F(t)$ is equal to a polynomial  of degree $\leq N$ in $t$ modulo $gA\lb t\rb$. In particular, if we assume by contradiction that the set $\left\{g\in \mathcal P\mid F(t)\in A[t]+(g)A\lb t\rb\right\}$ is not finite, then there exists a sequence $(g_n)$ of distinct primes in $\mathcal P$, such that $F(t)$ is equal to a polynomial of degree $N_n$ modulo $g_nA\lb t\rb$ where the sequence $(N_n)_n$ is increasing and tends to infinity. In what follows, we show that the existence of this sequence would contradict Lemma \ref{bound_degree}.

Indeed, since $F(t)$ is algebraic, we may consider $\G(t,z):=a_0(t)z^d+\cdots+a_d(t)\in A[t,z]$ a polynomial such that $\G(t,F(t))=0$ and $a_0(t)\neq 0$. Denote by $F_n(t)$ (resp. $\G_n(t,z)$) the image of $F(t)$ in $A/(g_n)\lb t\rb$ (resp. of $\G(t,z)$ in $A/(g_n)[t,z]$). We have $\deg_t(F_n(t))=N_n$ and $\G_n(t,F_n(t))=0$. For $n\in\N$ large enough we have that $\G_n(t,z)\neq 0$ and $\deg_z(\G_n(t,z))=d$, because any given $a\in A$ has finitely many prime divisors. We conclude from Lemma \ref{bound_degree} that $N_n \leq \max_i\{\deg_t(a_i(t))\}$ for every $n$ sufficiently big, yielding a contradiction.
\end{proof}

\begin{remark}
Recall that, in general, an irreducible polynomial $\G(z)$ with coefficients in a UFD may be reducible modulo infinitely many primes of $A$. One classical example is given by $\G(z)=z^4+1$ that is irreducible over $\Z[z]$ but reducible modulo every prime number $p$. In contrast, Lemma \ref{finite_reduction} guarantees that for an irreducible polynomial $\G(t,z)\in A[t,z]$, the set 
$$
\left\{g\in \mathcal P\mid \G(t,z) \text{ is reducible modulo }(g)\right\}
$$
is finite, provided that $\G$ has a root in $A[\mathfrak c]\lb t\rb\setminus A[\mathfrak c][ t]$.
\end{remark}


Before proving Theorem \ref{thm_galois}, recall that given a UFD $A$ and $f\in A$, $f\neq 0$, the the localization $A_f$ is also a UFD; we will use this observation implicitly below. We recall that this claim follows from the fact that a UFD is a Krull domain in which every prime ideal of height 1 is principal. Since $A$ is a UFD, it is a Krull domain so $A_f$ is also a Krull domain. Because the localization morphism $A\lgw A_f$ induces an isomorphism between the primes of $A_f$ and the primes of $A$ avoiding $f$, every prime ideal of $A_f$ of height 1 is necessarily principal. We are now ready to prove our main result about algebraic power series with coefficients in a UFD:

\begin{proof}[Proof of Theorem \ref{thm_galois}]
We start by showing that we may suppose that $\Gamma$ is monic. We write
$$
\Gamma=p_0z^d+p_1z^{d-1}+\cdots+ p_d.
$$
We have
$p_0^{d-1}\Gamma(t,z)=R(t,p_0z)$ where 
$$
R(t,z)=T^d+p_1T^{d-1}+p_2p_0T^{d-2}+\cdots+ p_dp_0^{d-1}.
$$
We have $R(t,p_0\g_i)=0$ for $i=1$, $2$. If we set $\g'_i=p_0\g_i$, and we prove the statement of the Theorem for the $\g_i'$ then we also deduce the statement of the Theorem for the $\g_i$, since $Q(t,p_0^{-1} z) \in \LL[t,z]$ if and only if $Q\in \LL[t,z]$; therefore, we suppose that $\Gamma$ is monic in $z$.

Let us first treat the case that $\mathfrak c\in \mathbb L$. By replacing $A$ by $A_g$ for some well chosen $g\in A$, we can assume that $\mathfrak c\in A$. We claim that there exists $f\in A$ such that
\begin{equation}\label{implication}
\forall P\in A[t,z],\forall g\in A,\ P(t,\g_1)\in gA\lb t\rb\Longrightarrow P(t,\g_2)\in gA_f\lb t\rb.
\end{equation}
Note that the Theorem then follows from the Claim. Indeed, if $Q \in \LL[t,z]$ then there exists $g\in A$ such that $P=g \, Q \in A[t,z]$. In particular, $Q(t,\gamma_1(t)) \in A\lb t\rb $ implies that $P(t,\gamma_1(t)) \in  g A\lb t\rb$, so the Claim implies that $P(t,z) \in gA_f\lb t\rb$ and, therefore, $Q \in A_f\lb t\rb$. In order to prove the Claim, we start by noting that, since $A$ is a UFD, it is enough to prove the Claim for every irreducible element $g$ of $A$. By replacing $P$ by its remainder under its Euclidean division by $\Gamma$, furthermore, we may assume that $\deg_z(P)<d$. So let's consider the set
 $$
 \mathcal G:=\{g\in A \text{ prime }\mid \exists P,\ P(t,\g_1)\in gA\lb t\rb, P(t,z)\notin gA[t,z]\text{ and }\deg_z(P)<d\},
 $$
and let's prove that it is finite (up to multiplication by a unit). Indeed, note that if $P(t,\g_1)\in gA\lb t\rb$ and $P(t,z)\notin gA[t,z]$, we have that $\ovl{\Gamma}$ is not irreducible $A/g[t,z]$, where $\ovl R$ denote the image of a polynomial $R\in A\lb t\rb[z]$ in $A/g\lb t\rb[z]$. Thus 
$$
\prod_{i\in E_g}(z-\ovl \g_i)\in A/g[t,z]
$$
 for some $E_g\subsetneq \{1,\ldots, d\}$. For $E\subsetneq \{1,\ldots, d\}$, we set $\Gamma_E:=\prod_{i\in E}(z-\g_i)$.
 
Now assume by contradiction that $\mathcal{G}$ is infinite. In this case, there is $E\subsetneq\{1,\ldots, d\}$ such that $\Gamma_E \in A/g[t,z]$ for infinitely many primes $g$. But the coefficients of $\Gamma_E$ are in $A\lb t\rb$, and at least one of them is not in $A[t]$ because $\Gamma$ is irreducible in $A[t,z]$. Since the $\g_i$ are algebraic over $A[t]$, the coefficients of $Q_E$ are also algebraic over $A[t]$. We therefore obtain a contradiction with Lemma \ref{finite_reduction}, and conclude that $\mathcal{G}$ is finite. We may therefore define
\[
f=\prod_{g\in\mathcal G} g.
\]
Note that the Claim is verified with this choice of $f$ for every irreducible $g$ by construction. Thus the Claim is proved, finishing the case that $\pc \in \LL$.

\medskip

Now we assume that $\mathfrak c\notin \mathbb L$. Since $\pc$ is algebraic, we may write $a_0\mathfrak c^e+a_1\mathfrak c^{e-1}+\cdots+ a_e=0$ where $a_i \in A$ for every $i=0,\ldots,e$. By replacing $A$ by $A_{a_0}$ we may assume that $\mathfrak c$ is finite over $A$ (of degree $e$). For every $i$ we can write in a unique way
 \begin{equation}\label{eq_basis}
 \g_i=\g_{i,0}+\g_{i,1}\mathfrak c+\cdots+\g_{i,e-1}\mathfrak c^{e-1}
 \end{equation}
 where the $\g_{i,j}$ belong to $A\lb t\rb$ and are algebraic over $A[t]$. For $Q\in \mathbb L[t,z]$ we can expand in a unique way
 $$
 Q(t,z_0+z_1\mathfrak c+\cdots+z_{e-1}\mathfrak c^{e-1})=\sum_{k=0}^{e-1}Q_k(t, z_0,\ldots, z_{e-1})\mathfrak c^k
 $$
 where the $Q_k$ belong to $\mathbb L[t,z_0,\ldots, z_{e-1}]$. For $i\neq 1$, $\g_i$ is obtained from $\g_1$ expanded as in \eqref{eq_basis} by replacing the $\g_{1,j}$ by its conjugates $\g_{i,j}$. Following the same logic as of the first case, we are reduced to proving the claim that there is $f\in A$ such that for every $ P\in A[t,z_0,\ldots, z_{e-1}]$ and every  $g\in A$,
$$
P(t,\g_{1,0},\ldots,\g_{1,e-1})\in gA\lb t\rb\Longrightarrow P(t,\g_{2,0},\ldots,\g_{2,e-1})\in gA_f\lb t\rb.
$$
By the primitive element Theorem (that we can apply since $\LL[t]\lgw \LL\langle t\rangle$ is separable), we have that 
$$\mathbb L(t,\g_{i,0},\ldots, \g_{i,e-1})=\mathbb L\left(t,\sum_{k=0}^{e-1}\la_k\g_{i,k}\right)$$
for every $(\la_k)_k$ in a Zariski open dense subset $V_i$ of $\LL^e$. Therefore we may choose $(\la_k)_k\in\cap_{i=1}^{d}V_i$ and assume that for every $i=1$, \ldots, $d$
$$
\mathbb L(t,\g_{i,0},\ldots, \g_{i,e-1})=\mathbb L\left(t,\sum_{k=0}^{e-1}\la_k\g_{i,k}\right).
$$
Thus there is $\G_{i,k}\in\mathbb L(t)[U]$ such that
\begin{equation}\label{2nd}\g_{i,k}=\G_{i,k}\left(t,\sum_{k=0}^{e-1}\la_k\g_{i,k}\right).\end{equation}
By replacing the $\g_{i,k}$ by their conjugates $\g_{i',k}$ in \eqref{2nd} we see that we can choose the $\G_{i,k}$ to be independent of $i$. From now we denote $\G_{i,k}$ by $\G_k$, and $\sum_{k=0}^{e-1}\la_k\g_{i,k}$ by $\d_i$. By the claim made in the first case where we assumed that $\pc\in\LL$, there exists $f\in A$ such that
$$
\forall P'\in A[t,z],\forall g\in A,\ P'(t,\d_1)\in gA\lb t\rb\Longrightarrow P'(t,\d_2)\in gA_f\lb t\rb.
$$
Now, let $P\in A[t,z_0,\ldots, z_{e-1}]$ and $g\in A$ such that
$$
P(t,\g_{1,0},\ldots, \g_{1,e-1})\in gA_f\lb t\rb.
$$
Let $D(t)\in A[t]$ be a common denominator of the $\G_k$, that is, a polynomial such that $D(t)\G_k\in A[t,U]$ for every $k$.
Then there is an integer $\ell$, depending on $P$, such that
$$
R(t,z):=D(t)^\ell P(t, \G_0(t, z),\ldots, \G_{e-1}(t,z))\in A[t,z].
$$
By assumption $R(t,\d_1)\in gA\lb t\rb$, whence $R(t,\d_2)\in gA_f\lb t\rb$. We can write $D(t)=f_0t^{d_0}\times u(t)$ where $f_0\in A$ and $u(t)\in A_{f_0}[t]$ satisfies $u(0)=1$.
This shows that $P(t,\g_{2,0},\ldots, \g_{2,e-1})\in gA_{ff_0}\lb t\rb$ proving the Claim, and the Theorem is proven.
 \end{proof}

\section{On rank Theorems for non W-temperate families}\label{ssec:Quasianalytic}

We provide examples of a families of local rings where the rank Theorem does not hold. We consider an example given in \cite[Example 1.8]{BBmon}, which is based on a construction due to Nazarov, Sodin and Volberg \cite[\S\,5.3]{NSV}. We refer the reader to \cite[$\S$3]{BBmon} for a detailed presentation of quasianalytic classes, and we follow its notation.  Consider quasianalytic Denjoy-Carleman classes $\mathcal{Q}_M$ which satisfies two properties:
\begin{itemize}
\item[1)] There is a function $g \in \mathcal{Q}_M([0,1))$ which admits no extension to a function in $\mathcal{Q}_{M'}((-\delta,1))$, for all $\delta>0$ and all quasianalytic Denjoy-Carleman class $\mathcal{Q}_{M'}$ (these classes exist by \cite[\S\,5.3]{NSV});
\item[2)] The shifted class $\mathcal{Q}_{M^{(p)}}$, where $M^{(p)}_k := M_{pk}$, is a quasianalytic Denjoy-Carleman class for every $p \in \mathbb{N}$. 
\end{itemize}
For example, the class $\mathcal{Q}_M$ given by the sequence $M=(M_k)_{k\in \mathbb{N}}$, where $M_k = (\log(\log k))^k$, satisfies both conditions.

Let $\Phi: (-1,1)\lgw\R^2$ denote the $\mathcal{Q}_M$-morphism $ \Phi(u) = (u^2,g(u^2))$, and let $\phi = \Phi^{\ast}$ denote its pull-back at $0$. Note that $\Gr(\phi)= \Fr(\phi)$ since $G(x) = x_2 - \hat{g}(x_1)$ is a formal power series such that $\hat{\phi}(G) = \hat{g}(u^2) - \hat{g}(u^2) \equiv 0$. Now, suppose by contradiction that there exists a function germ $h \in \mathcal{Q}_M(-\epsilon,\epsilon)$, for some $\epsilon<1$, such that $\phi(h) = h \circ \phi(u) \equiv 0$. We remark that $\hat{h}( t, \hat{g}(t)) \equiv 0$ since
\[
0 \equiv \hat{\phi}( \hat{h}) = \hat{h}(u^2, \hat{g}(u^2)),
\]
so we conclude that the equation $h(x_1,x_2) = 0$ admits a formal solution $x_2 = \hat{g}(x_1)$. By \cite[Theorem 1.1]{BBI}, apart from shrinking $\epsilon$, there exists a function $f \in \mathcal{Q}_{M^{(p)}}(-\epsilon,\epsilon)$, for some $p\in \mathbb{N}$, such that $h(x_1,f(x_1))\equiv 0$ and $\hat{f} = \hat{g}$. Since $\mathcal{Q}_{M^{(p)}}$ is quasianalytic by condition 2) and contains $\mathcal{Q}_{M}$, we conclude that $f_{|[0,\epsilon)} = g_{|[0,\epsilon)}$. This implies that $g$ admits an extension in the shifted quasianalytic class $\mathcal{Q}_{M^{(p)}}$, contradicting condition 1). We conclude Theorem \ref{thm:TemperateRank} does not hold for these classes.

We can also consider the $o$-minimal structure $\R_{\mathcal{Q}_M}$ given by expansion of the real field by restricted functions of class $\mathcal{Q}_M$ satisfying conditions 1) and 2) above, cf.\ \cite{RSW}, and the quasianalytic class $\mathcal{Q}$ of $\mathcal{C}^\infty$ functions that are locally definable in $\R_{\mathcal{Q}_M}$. By
\cite[Theorem\,1.6]{BBC}, any function $h\in\mathcal{Q}((-1,1))$ belongs to a shifted Denjoy-Carleman class {$\mathcal{Q}_{M^{(p)}}$}, for some positive integer $p$. We conclude that the morphism $\Phi$ defined above shows that the rank Theorem can not hold for $\mathcal{Q}$. 

\begin{remark}
Every quasianalytic class which properly contains the analytic functions does not satisfy the Weierstrass preparation property \cite{PR2}. 
\end{remark}

Finally, let us note that Theorem \ref{thm:TemperateRank} holds for at least one family of rings which is not Weierstrass. For instance, if we set $\KK\nl x_1,\ldots, x_n\nr=\KK[x_1,\ldots, x_n]_{(x_1,\ldots, x_n)}$ for any integer $n$, then for any morphism $\phi:\KK\nl \x \nr\lgw \KK\nl \ub \nr$, we have
$$
\Gr(\phi)=\Fr(\phi)=\dim\left(\KK\nl \x \nr/\Ker(\phi)\right)
$$
essentially by Chevalley's constructible set Theorem. But the rings of rational functions are not Henselian local rings, so they do not form a Weierstrass family, c.f. Proposition \ref{rk:Wtemp}\,\ref{prop:hensel}.



\end{document}